\tikzset{ext/.style={circle, draw,inner sep=1pt},int/.style={circle,draw,fill,inner sep=1pt},nil/.style={inner sep=1pt}}
\tikzset{exte/.style={circle, draw,inner sep=1pt},inte/.style={circle,draw,fill,inner sep=3pt}}
\tikzset{diagram/.style={matrix of math nodes, row sep=1.5em, column sep=0.5em, text height=1.5ex, text depth=0.25ex}}
\tikzset{diagram2/.style={matrix of math nodes, row sep=0.5em, column sep=0.5em, text height=1.5ex, text depth=0.25ex}}
\newcommand{\aor}{\rcirclearrowleft}
\newcommand{\aol}{\lcirclearrowright}
\newcommand{\cP}{{\mathcal{P}}}
\newcommand{\cQ}{{\mathcal{Q}}}
\newcommand{\cM}{{{\mathcal M}}}
\newcommand{\Br}{{\mathsf{Br}}}
\newcommand{\CBr}{{\mathsf{CBr}}}
\newcommand{\CPT}{{\mathsf{CPT}}}
\newcommand{\BV}{{\mathsf{BV}}}
\newcommand{\BVKGra}{{\mathsf{BVKGra}}}
\newcommand{\BVKGraphs}{{\mathsf{BVKGraphs}}}
\newcommand{\BVGra}{{\mathsf{BVGra}}}
\newcommand{\BVGraphs}{{\mathsf{BVGraphs}}}
\newcommand{\Gra}{{\mathsf{Gra}}}
\newcommand{\Ger}{{\mathsf{Ger}}}
\newcommand{\Lie}{{\mathsf{Lie}}}
\newcommand{\hoLie}{{\mathsf{hoLie}_1}}
\newcommand{\FM}{{\mathsf{FM}}}
\newcommand{\FFM}{{\mathsf{FFM}}}
\newcommand{\Tpoly}{T_{\rm poly}}
\newcommand{\Dpoly}{D_{\rm poly}}
\newcommand{\TDpoly}{\tilde{D}_{\rm poly}} 
\newcommand{\Rformal}{\mathbb R^d_{\rm{formal}}}
\DeclareMathOperator{\vol}{vol}
\DeclareMathOperator{\id}{id}
\DeclareMathOperator{\End}{End}
\DeclareMathOperator{\Hom}{Hom}
\DeclareMathOperator{\sgn}{sgn}
\DeclareMathOperator{\F}{Forget_\infty} %should do something like \newcommand{\F}{Forget_\infty} to make it look apropriate with the *
\newtheorem{theorem}{Theorem}[]
\newtheorem{corollary}[theorem]{Corollary}
\newtheorem{lemma}[theorem]{Lemma}
\newtheorem{proposition}[theorem]{Proposition}
\newtheorem{prop-def}[theorem]{Proposition/Definition}
\newtheorem{remark}[theorem]{Remark}
\newtheorem{convention}[theorem]{Convention}
\newtheorem{defi}{Definition}
\newtheorem{example}{Example}
\begin{document}

\title{BV Formality}

\author{Ricardo Campos}
\address{Institute of Mathematics\\ University of Zurich\\ Winterthurerstrasse 190 \\ 8057 Zurich, Switzerland}
\email{ricardo.campos@math.uzh.ch}

\begin{abstract}
We prove a stronger version of the Kontsevich Formality Theorem for orientable manifolds, relating the Batalin-Vilkovisky (BV) algebra of multivector fields and the homotopy BV algebra of multidifferential operators of the manifold.
\end{abstract}

\maketitle
\tableofcontents

\section{Introduction}

Given a manifold $M$,  the space of multidifferential operators of $M$, $\Dpoly(M)$ is a smooth version of the Hochschild complex of the functions on $M$. Both $\Dpoly(M)$ and the space $\Tpoly(M)$ of multivector fields of $M$  are (shifted) differential graded Lie algebras. These two objects are related by the Hochschild-Kostant-Rosenberg Theorem that provides us with a quasi-isomorphism $\Tpoly (M)\to \Dpoly(M)$. However, this map not compatible with the Lie structure.

 Searching for a canonical formal quantization of Poisson manifolds, in \cite{Kontsevich} M. Kontsevich establishes the existence of a homotopy Lie quasi-isomorphism $\Tpoly (M)\to \Dpoly(M)$ extending the Hochschild-Kostant-Rosenberg map. This map, nowadays called Kontsevich's Formality morphism, has a very explicit description involving integrals over configuration spaces of points when $M= \mathbb R^d$. 

%The origin of the name comes from rational homotopy theory, where a differential graded algebra is called formal if it is quasi-isomorphic to its homology and this adjective is now used for other algebraic structures. Since in $\Tpoly$ has vanishing differential, $\Tpoly$ is the homology of $\Dpoly$. 

Taking the wedge product into consideration $\Tpoly$ is a Gerstenhaber algebra, and even if $\Dpoly$ is not a Gerstenhaber algebra, its homology is in a standard way. It is natural to ask whether one can put a homotopy Gerstenhaber algebra structure on $\Dpoly$ that induces the usual Gerstenhaber algebra in the cohomology (Deligne's conjecture) and find a Formality morphism satisfying the Gerstenhaber structure up to homotopy. This question has been answered affirmatively by D. Tamarkin \cite{Tamarkin,Hinich}. 

In \cite{Note}, T. Willwacher uses a different model for the Gerstenhaber operad, the Braces operad, that acts naturally on $\Dpoly$ given the nature of the formulas. Willwacher proves in loc. cit. a homotopy Braces version of the Formality morphism.

In this paper we intend to take the final step on this chain of results by showing a BV version of the Formality Theorem(s).
As described in Section \ref{section:preliminaries}, we can endow both $\Tpoly(\mathbb R^d)$ and the cohomology of $\Dpoly(\mathbb R^d)$ with a degree $-1$ operator, extending the previous Gerstenhaber structures to BV algebra structures.

The cyclic structure of  $\Dpoly(\mathbb R^d)$ leads to the construction of $\CBr$, the Cyclic Braces operad which is a refinement of the Braces operad. We show that the operad $\CBr$ is quasi-isomorphic to $\BV$, the operad governing BV algebras, and the action of $\CBr$ on $\Dpoly(\mathbb R^d)$ descends to the canonical BV algebra structure on $\Dpoly(\mathbb R^d)$. In section \ref{section: main thm in Rd} we show that the BV action on $\Tpoly(\mathbb R^d)$ can be lifted to an action of $\CBr_\infty$, a resolution of $\CBr$ and we show the first main Theorem.

\begin{theorem}\label{main theorem}
There exists a $\CBr_\infty$ quasi-isomorphism $\Tpoly(\mathbb R^d) \to \Dpoly(\mathbb R^d)$.
\end{theorem}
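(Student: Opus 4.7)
The strategy is to extend Willwacher's Braces formality \cite{Note} to the cyclic/BV setting through an auxiliary two-coloured dg operad $\BVKGra$ of Kontsevich-style graphs. The vertices of these graphs come in two flavours (external ones carrying $\Tpoly(\mathbb R^d)$-inputs and internal ones carrying $\Dpoly(\mathbb R^d)$-inputs), and the operad is enhanced with extra generators recording the action of the BV operator at internal vertices. One first shows that $\BVKGra$ maps quasi-isomorphically to $\CBr_\infty$, extending Willwacher's Braces-level projection, and that $\BVKGra$ acts on the pair $(\Tpoly(\mathbb R^d), \Dpoly(\mathbb R^d))$ by the usual Kontsevich contraction rules, with the new generators acting as divergences.

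The desired $\CBr_\infty$-morphism is then obtained by Kontsevich's integration procedure: to each generating graph one associates a weight given by the integral of a product of propagator $(d-1)$-forms over the compactified configuration space $\FM_d$, suitably enlarged with strata that host the BV insertions. The main obstacle is verifying that these weights fit into a chain map, i.e.\ that they satisfy the $\CBr_\infty$ Maurer-Cartan equation; by Stokes' theorem this reduces to a cancellation statement on the codimension-one boundary strata. The standard collision strata reproduce, as in Kontsevich and Willwacher, the Braces and Gerstenhaber relations; the novel strata, where BV-generators collide with ordinary vertices or with each other, must either be absorbed into the higher $\CBr_\infty$ operations or vanish by an orientation/unimodularity symmetry of the relevant integrand. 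This boundary analysis is where the genuine new content of the theorem lies, and where the detailed form of the resolution $\CBr_\infty$ built in the preliminaries is used in an essential way.

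Finally, the quasi-isomorphism property is automatic: the tree-level, propagator-free component of the constructed morphism is the Hochschild-Kostant-Rosenberg map, which is already a quasi-isomorphism of the underlying chain complexes. This feature is inherited unchanged from Kontsevich's original formality theorem, so no additional work is required once the $\CBr_\infty$-morphism has been produced.
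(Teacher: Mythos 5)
Your proposal has the right cast of characters (a graph operad $\BVKGra$ acting on the pair $(\Tpoly,\Dpoly)$ by contraction, Kontsevich-type configuration-space weights, HKR for the quasi-isomorphism of the underlying complexes), but the central step is missing and the geometric setup is off. First, the weights are not integrals of $(d-1)$-forms over $\FM_d$: as in Kontsevich's original construction, the configuration spaces are two-dimensional for every $d$ — here the \emph{framed} Fulton--MacPherson spaces $\FFM_2$ and the framed upper half-plane spaces $\mathsf F\mathbb H_{m,n}$ — and the propagators are hyperbolic angle $1$-forms measured from the point at infinity. The framing and the point at infinity are not decoration: they are what makes the cyclic group action (hence the BV operator) geometric, via the cyclic rotation of boundary points together with $\infty$ and the tadpole form $d\phi^{i,i}$ recording the frame. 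None of this appears in your plan, and without it there is no candidate for the cyclic/BV enhancement of the weights.

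Second, and more seriously, you propose to \emph{define} the $\CBr_\infty$-morphism generator by generator and verify the Maurer--Cartan equation by a Stokes-theorem cancellation on boundary strata, conceding that the novel strata "must either be absorbed into the higher $\CBr_\infty$ operations or vanish by symmetry." That is precisely the step you would have to carry out, and it presupposes an explicit combinatorial model of $\CBr_\infty$ indexed by such strata, which you do not construct. The paper avoids this entirely: it builds a strict map of Cyclic Swiss Cheese type operads $\left(Chains(\FFM_2), Chains(\mathsf F\mathbb H)\right)\to\left(\BVGra,\BVKGra\right)$ (a chain map for free, since one pairs chains with closed forms), assembles the bimodule $\CBr \aol Chains(\mathsf F\mathbb H_{\bullet,0}) \aor Chains(\FFM_2)$, and then invokes two homological inputs — $H(\CBr)=\BV$ (Lemma \ref{H(CBR)=BV}) and $H(\FFM_2)=\BV$ — to show this bimodule is an operadic quasi-torsor (Lemma \ref{lemma:quasi-torsor}). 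The lifting to $\CBr_\infty$ is then obtained abstractly from the main theorem of \cite{operadic torsors}, not by an explicit boundary analysis. So the genuinely new content is the cyclic-equivariant chain-to-graph map and the computation of $H(\CBr)$, not the stratum cancellation you defer; as written, your argument has a gap exactly where you locate the "genuine new content."
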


The components of this morphism are defined through integrals similarly to Kontsevich's case.

The formality of the Cyclic Braces operad implies that in the previous Theorem $\CBr_\infty$ can be replaced any other cofibrant resolution of $\BV$, namely its minimal model or the Koszul resolution of $\BV$.\\

If we require orientability of the manifold $M$, the spaces $\Tpoly(M)$ and $H(\Dpoly(M))$ still have natural $BV$ structures. Using formal geometry techniques, together with the formalism of twisting of bimodules, in Section \ref{section:globalization} we show a global version Theorem \ref{main theorem}.

\begin{theorem}\label{main global}
Let $M$ be an oriented manifold. There exists a $\CBr_\infty$ quasi-isomorphism $\Tpoly(M) \to \Dpoly(M)$  extending Kontsevich's Formality morphism.
\end{theorem}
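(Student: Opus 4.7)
The plan is to promote Theorem~\ref{main theorem} from $\mathbb R^d$ to an oriented manifold $M$ via standard formal geometry combined with the bimodule twisting formalism alluded to in the introduction. First I would set up the Gelfand-Kazhdan / Fedosov picture: over $M$ one has a principal bundle $M^{\mathrm{coor}} \to M$ of formal coordinate systems, equipped with a canonical flat $\mathfrak a_d$-connection, where $\mathfrak a_d$ denotes the Lie algebra of formal vector fields on $\mathbb R^d$. Both $\Tpoly(M)$ and $\Dpoly(M)$ are recovered as horizontal sections of the associated bundles with fibers $\Tpoly(\Rformal)$ and $\Dpoly(\Rformal)$. Orientability of $M$ is precisely the condition that permits reducing the structure group so that the connection takes values in the subalgebra $\mathfrak a_d^{SL}$ of divergence-free formal vector fields; this reduction is what lets the BV operators on both sides survive globalization.

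Next, I would interpret the local $\CBr_\infty$ quasi-isomorphism $\mathcal U$ from Theorem~\ref{main theorem} as a Maurer-Cartan element in a suitable convolution dg Lie algebra controlling $\CBr_\infty$-morphisms $\Tpoly(\Rformal) \to \Dpoly(\Rformal)$. Since the components of $\mathcal U$ are defined by configuration space integrals, they are manifestly equivariant for divergence-free formal vector fields, and hence extend to morphisms between the associated bundles over $M^{\mathrm{coor}}$. Twisting this structure by the Fedosov Maurer-Cartan element and passing to horizontal sections then produces the desired global $\CBr_\infty$-morphism $\Tpoly(M) \to \Dpoly(M)$. That this morphism is a quasi-isomorphism follows fiberwise from Theorem~\ref{main theorem} combined with a standard spectral sequence comparison on the Fedosov resolutions, and that it extends Kontsevich's Formality morphism is automatic from the construction, since the lowest $\Lie_\infty$-components of $\mathcal U$ are the classical Kontsevich weights.

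The main technical obstacle is the twisting step itself: one needs to verify that the $\CBr_\infty$-morphism structure is compatible with the action of formal vector fields in the strong sense that twisting by a Maurer-Cartan element of $\mathfrak a_d^{SL}$ produces a legitimate $\CBr_\infty$-morphism of the twisted spaces. Concretely, this amounts to checking that the $\mathfrak a_d^{SL}$-action lifts coherently to the $\CBr_\infty$-bimodule parametrizing morphisms between $\Tpoly(\Rformal)$ and $\Dpoly(\Rformal)$, and that the twisting operation commutes with taking horizontal sections compatibly with the operadic structure. This is the $\CBr_\infty$/BV enhancement of the classical globalization argument for the Kontsevich Formality morphism, and I expect it to go through once the bimodule-twisting formalism is set up carefully at the operadic level, but ensuring that all of the relevant actions, equivariances, and flatness conditions assemble coherently is where the bulk of the technical work will lie.
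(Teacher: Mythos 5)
Your overall strategy --- Fedosov/formal-geometry resolutions of $\Tpoly(M)$ and $\Dpoly(M)$ by forms valued in bundles with fiber $\Tpoly(\Rformal)$, $\Dpoly(\Rformal)$, followed by a twist by the connection Maurer--Cartan element $B$ --- is exactly the paper's route (Section \ref{section:globalization}). However, the two places where you assert the argument goes through are precisely where the paper has to do real work, and your assertions as stated are not correct.

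First, you claim the components of the local morphism are ``manifestly equivariant for divergence-free formal vector fields'' and that the twist is therefore legitimate. This is not manifest, and full equivariance is not what the paper establishes. The actual obstruction is that the linear part of $B$ is not globally well defined, so one must show that the morphism never ``sees'' that linear part. The paper does this by constructing the sub-bimodule $\BVKGraphs$ (and suboperad $\BVGraphs$) of the twisted graph bimodule, spanned by graphs with no $1$-valent internal vertices with an outgoing edge and no $2$-valent internal vertices with one incoming and one outgoing edge (and no tadpoles on internal vertices), and then proving via Stokes-type vanishing lemmas on configuration-space integrals (Proposition \ref{lemma:factors through BVGraphs} and its bimodule analogue) that the chain-level morphism factors through these subspaces. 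This is the $\CBr_\infty$ analogue of Kontsevich's properties P1)--P5), and it is the heart of the globalization; your proposal names the obstacle but contains no mechanism for resolving it.

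Second, you state that the extension of Kontsevich's Formality morphism is ``automatic from the construction.'' It is not: the $\CBr_\infty$ morphism is obtained by lifting along a zig-zag of quasi-isomorphisms of bimodules, and an arbitrary lift need not restrict to Kontsevich's map on the $\hoLie$ generators. Section \ref{subsection:extension} of the paper shows that the lift can be \emph{chosen} so that each $\mu_n$ is sent to (a preimage of) the fundamental chain of $\FM_2(n)\subset\FFM_2(n)$, using the compatibility of the boundary of the fundamental chain with the cocomposition in $\Lie\{1\}^{\vee}$. This choice matters downstream, since it is what guarantees property P4), hence $U_n(B,\dots,B)=0$ for $n\geq 2$ and $B'=U_1(B)=B$, which the twist argument relies on.
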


Applications of this theorem to string topology are expected.

\subsection{Acknowledgments}
I would like to thank Thomas Willwacher for proposing the problem and for the useful discussions and suggestions. I would also like to thank Ya\"el Fr\' egier and Bruno Vallette for related discussions. 

This work was partially supported by the Swiss National Science Foundation, grant 200021\_150012 and the NCCR SwissMAP funded by the Swiss National Science Foundation.

\subsection{Notation and conventions}
In this paper we work over the field $\mathbb R$ of real numbers, even though the ``algebraic" results hold in any field of characteristic zero.

All algebraic objects objects are differential graded, or dg for short, unless otherwise stated so.

 If $\cP$ is a 2-colored operad, we denote the space of operations with $m$ inputs in color $1$, $n$ inputs in color $2$ and output in color $i$ by $\cP^i(m,n)$ and we might denote $\cP$ by $\left(\cP^1,\cP^2\right)$.

\section{Preliminaries}\label{section:preliminaries}

\subsection{BV algebras}
Let us recall the definition of a BV algebra and also fix degree conventions.

\begin{defi}
A Batalin-Vilkovisky algebra or BV-algebra is a quadruplet $(A,\cdot,[\ , \ ], \Delta)$, such that:

\begin{itemize}
\item $(A, \cdot)$ is a (differential graded) commutative associative algebra,

\item $(A,[\ ,\ ])$ is a $1$-shifted Lie algebra (i.e., the bracket has degree -1),

\item $(A,\cdot,[\ ,\ ])$ is a Gerstenhaber algebra, i.e., for all $a\in A$ of degree $|a|$, the operator $[a,-]$ is a derivation of degree $|a|-1$.

\item $\Delta\colon A\to A$ is a unary linear operator of degree $-1$ such that $\Delta$ is a derivation of the bracket,

\item The bracket is the failure of $\Delta$ being a derivation for the product, i.e.,

$$[-,-]=\Delta \circ(-\cdot -) - (\Delta(-)\cdot - )-(-\cdot \Delta (-)).$$
\end{itemize}
\end{defi}

We denote by $\BV$, the operad governing BV algebras.

\subsection{Hochschild cochain complex}

In this section we recall the basics of Hochschild cohomology. For a more detailed introduction, along with the missing proofs, see \cite{cyclic homology}.

Let $A$ be a non-graded associative algebra.% We define the vector space $C^n(A)$ to be $\Hom(A^{\otimes n}, A)$, for $n\geq 0$.

For $f\colon A^{\otimes m} \to A$ and $g\colon A^{\otimes n} \to A$, we define $f\circ_i g \colon A^{\otimes {m+n-1}} \to A$, for $i=1,\dots, m$, to be the insertion of $g$ at the $i$-th slot of $f$, 
$$f\circ_i g (a_1,\dots, a_{m+n-1}) = f(a_1,\dots, a_{i-1}, g(a_i,\dots,a_{i+n-1}),\dots,a_{m+n-1}).$$

\begin{lemma}
Let $f\colon A^{\otimes m} \to A$ and $g\colon A^{\otimes n} \to A$. The operation $f\circ g  \colon A^{\otimes {m+n-1}} \to A)$ given by 
$$f\circ g = \sum_{i=1}^m (-1)^{i-1}f\circ_i g,$$ 
defines a pre-Lie product (of degree $-1$).
\end{lemma}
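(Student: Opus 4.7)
The plan is a direct computation: expand $(f\circ g)\circ h$ using the definition, split the double sum according to whether the second insertion lands outside or inside the region occupied by the first insertion, and check that the identity
\[
(f\circ g)\circ h - f\circ(g\circ h) \;=\; (-1)^{(n-1)(p-1)}\bigl((f\circ h)\circ g - f\circ(h\circ g)\bigr),
\]
where $g$ has arity $n$ and $h$ has arity $p$, holds on the nose after symmetrising the right hand side. This is the classical Gerstenhaber argument; my task is just to organise the case analysis and keep the signs straight.

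First I would expand
\[
(f\circ g)\circ h \;=\; \sum_{i=1}^{m}\sum_{j=1}^{m+n-1}(-1)^{i-1}(-1)^{j-1}(f\circ_i g)\circ_j h
\]
and partition the inner sum into three ranges: $1\le j<i$, $i\le j\le i+n-1$, and $i+n\le j\le m+n-1$. In the middle range, $h$ is inserted into the copy of $g$ already placed at slot $i$ of $f$, so the corresponding term equals $f\circ_i(g\circ_{j-i+1}h)$. Summing over $i$ and the shifted index gives exactly $f\circ(g\circ h)$, accounting for the sign $(-1)^{i-1}(-1)^{j-1}=(-1)^{i-1}(-1)^{(j-i+1)-1}(-1)^{i-1}\cdot(-1)^{\text{shift}}$ which collapses correctly.

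In the remaining two ranges the arguments $g$ and $h$ sit side by side in the output, so the terms can be rewritten as compositions $(f\circ_{j'}h)\circ_{i'}g$ after an index change ($j'=j$, $i'=i+p-1$ in one range; $j'=j-n+1$, $i'=i$ in the other). These are precisely the non-nested contributions in $(f\circ h)\circ g$. Subtracting $f\circ(g\circ h)$ from $(f\circ g)\circ h$ therefore leaves only the non-nested terms, which by construction are symmetric in the pair $(g,h)$ up to the Koszul sign $(-1)^{(n-1)(p-1)}$ coming from reindexing. The same manipulation with $g$ and $h$ swapped shows that $(f\circ h)\circ g - f\circ(h\circ g)$ yields the same collection of non-nested terms, producing the pre-Lie identity.

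The main obstacle, and really the only non-trivial bookkeeping, is the verification of the signs: the $(-1)^{i-1}$ factors from the outer bracket interact with the shift $j\mapsto j-n+1$ (or the reindexing of $i$) and must combine to give exactly the Koszul sign $(-1)^{(n-1)(p-1)}$ on the right hand side. I would handle this by fixing the convention that an element of $\Hom(A^{\otimes m},A)$ has degree $m-1$, so that the operation $\circ$ has degree zero in the shifted grading, and then checking the two non-nested ranges separately; each produces a sign that matches the shifted grading convention, and their sum matches the claimed identity.
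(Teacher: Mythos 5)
The paper itself gives no proof of this lemma (it defers to Loday's book for the ``missing proofs''), so the only thing to assess is your computation, and your overall strategy --- splitting the double sum for $(f\circ g)\circ h$ into the nested range $i\le j\le i+n-1$, where $(f\circ_i g)\circ_j h=f\circ_i(g\circ_{j-i+1}h)$, and the two non-nested ranges, where $g$ and $h$ sit side by side and the terms are symmetric under exchanging $(g,h)$ up to the Koszul sign --- is exactly the classical Gerstenhaber argument and is the right route.

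There is, however, a genuine gap precisely at the step you assert rather than check: with the sign $(-1)^{i-1}$ \emph{as printed in the statement}, the middle range does \emph{not} collapse to $f\circ(g\circ h)$. In the nested range the term $f\circ_i(g\circ_k h)$ (with $k=j-i+1$) appears in $(f\circ g)\circ h$ with coefficient $(-1)^{(i-1)+(j-1)}$, whereas in $f\circ(g\circ h)$ it appears with coefficient $(-1)^{(i-1)+(k-1)}=(-1)^{(i-1)+(j-i)}$; these differ by $(-1)^{i-1}$, so all terms with $i$ even survive the subtraction. The smallest counterexample is $m=2$ and $g,h$ unary: one finds
\begin{equation*}
(f\circ g)\circ h-f\circ(g\circ h)=2\,f\circ_2(g\circ_1 h)+(\text{non-nested terms}),
\end{equation*}
and the leftover nested term is not symmetric in $g\leftrightarrow h$, so the pre-Lie identity fails for the formula taken literally. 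The statement is true, and your argument goes through verbatim, once the sign is taken to be the Gerstenhaber sign $(-1)^{(i-1)(n-1)}$, i.e.\ $(-1)^{(i-1)|g|}$ in the shifted grading $|g|=n-1$ that you yourself propose to work in (note that for the binary $\mu$ used later in the paper the two conventions coincide, which is why the discrepancy is easy to miss). So you should either prove the lemma with the corrected sign and record that the printed exponent is a typo for $(i-1)(n-1)$, or carry out the sign bookkeeping explicitly --- the phrase ``collapses correctly'' is doing all the work, and it is exactly where the claim as written breaks.
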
 

This defines a $-1$ shifted graded Lie algebra structure on $\prod_{n\geq 0}\Hom(A^{\otimes n}, A)$. Let $\mu \colon A^{\otimes 2} \to A$ be the multiplication of the algebra.

 Since $A$ is an associative algebra, we have $$[\mu,\mu](a_1,a_2,a_3) = 2\mu(a_1,\mu(a_2,a_3)) - 2\mu(\mu(a_1,a_2),a_3) = 0,$$ i.e., $\mu$ is a Maurer-Cartan element of the Lie algebra $\prod_{n\geq 0}\Hom(A^{\otimes n}, A)$.

\begin{defi}
Let $A$ be an associative algebra. The Hochschild cochain complex of $A$, $(C^\bullet(A),d)$ is defined by
$$C^n(A) = \Hom(A^{\otimes n}, A); \quad \quad d = [\mu, \cdot].$$
\end{defi}

Explicitly, for $f\in C^n(A)$ and $a_i\in A$, the differential is given by $df(a_1,\dots,a_{n+1})=$ $$ = a_1f(a_2,\dots,a_{n+1}) + \sum_{i=1}^{n-1} (-1)^{i-1} f(a_1,\dots, a_ia_{i+1},\dots, a_n) + (-1)^nf(a_1,\dots,a_{n})a_{n+1}.$$

\begin{defi}
The Hochschild cohomology of an associative algebra $A$ is the cohomology of the complex $C^\bullet(A)$ and is denoted by $HH^\bullet (A)$.
\end{defi}

\begin{defi}
Let $f\in C^m(A)$ and $g\in C^n(A)$. The cup product on Hochshild cochains $f\cup g\in C^{m+n}(A)$ is defined by
$$f\cup g (a_1\dots,a_{m+n}) = f(a_1,\dots,a_m)\cdot g(a_{m+1},\dots,a_{n+m}),$$
\end{defi}

The cup product is trivially associative but, in general, non-commutative and it does not satisfy the desired compatibility with the Lie bracket. However, as the following proposition tells us, this is rectified at the cohomological level.

\begin{proposition}
The cup product and the Lie bracket above defined, induce a Gerstenhaber algebra structure on $HH^\bullet (A)$.
\end{proposition}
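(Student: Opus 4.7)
The plan is to establish the three defining properties of a Gerstenhaber algebra (associative graded-commutative product, shifted Lie bracket, Poisson/Leibniz compatibility) on the level of cohomology, by showing that each identity either holds on the nose on cochains or fails only up to an explicit coboundary.

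\textbf{Step 1: The cup product is a cochain map.} A direct computation from the explicit formula for the Hochschild differential yields the Leibniz rule
\[
 d(f \cup g) = df \cup g + (-1)^m f \cup dg
\]
for $f\in C^m(A)$, $g\in C^n(A)$. This is the standard telescoping: the inner term $a_{m} a_{m+1}$ appearing in $d(f\cup g)$ but not in $df\cup g$ or $f\cup dg$ is exactly cancelled by applying associativity of the product of $A$. Consequently $\cup$ descends to an associative product on $HH^\bullet(A)$. Associativity is immediate at the cochain level.

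\textbf{Step 2: The Lie bracket descends and acts by derivations of itself.} The previous lemma already gives a shifted pre-Lie structure, hence a shifted Lie bracket $[f,g] = f\circ g - (-1)^{(m-1)(n-1)} g\circ f$, on $\prod_n \Hom(A^{\otimes n},A)$. Since $d = [\mu,-]$, the differential is a derivation of the bracket, so $[-,-]$ descends to $HH^\bullet(A)$ as a shifted Lie bracket.

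\textbf{Step 3: Graded commutativity on cohomology.} The key computational identity, due to Gerstenhaber, is that at the cochain level
\[
 f \cup g - (-1)^{mn} g \cup f = (-1)^{m-1}\bigl( d(f\circ g) - df\circ g - (-1)^{m-1} f\circ dg \bigr),
\]
which I would verify by expanding both sides directly from the definitions of $\cup$, $\circ$ and $d$. Thus if $f$ and $g$ are cocycles, the commutator $f\cup g - (-1)^{mn} g\cup f$ is a coboundary, so $\cup$ is graded commutative on $HH^\bullet(A)$.

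\textbf{Step 4: The Poisson/Leibniz compatibility.} Finally, one proves the cochain-level identity
\[
 [f, g\cup h] - [f,g]\cup h - (-1)^{(m-1)n} g\cup [f,h] = d(\text{something}) + (\text{terms involving } df, dg, dh),
\]
where the correcting term is an explicit bilinear expression built from $\circ$ and $\cup$. Once both sides reduce to zero when $f, g, h$ are cocycles modulo coboundaries, the Leibniz rule $[f, g\cup h] = [f,g]\cup h + (-1)^{(m-1)n} g\cup [f,h]$ holds in $HH^\bullet(A)$, completing the Gerstenhaber structure.

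The genuinely delicate part is Step 3 (and its analogue in Step 4): checking the explicit homotopy formula for the cup-product commutator. It is a finite but careful sign-tracking calculation that matches the boundary terms in $d(f\circ g)$ against the antisymmetrization of $f\cup g$. Everything else, once the correct sign conventions from the preceding subsection are pinned down, is either formal (Step 1, 2) or a parallel bookkeeping exercise (Step 4).
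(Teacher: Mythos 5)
The paper does not prove this proposition: it is quoted as a classical result of Gerstenhaber, with the reader referred to the literature (Loday's book) for the missing proofs. Your outline is precisely that classical argument, and Steps 1--3 are correct as stated: the Leibniz rule for $d$ with respect to $\cup$, the fact that $d=[\mu,-]$ is an inner derivation of the shifted Lie bracket, and Gerstenhaber's homotopy-commutativity identity $f\cup g - (-1)^{mn}g\cup f = \pm\bigl(d(f\circ g) - df\circ g \mp f\circ dg\bigr)$ are exactly the standard ingredients.

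One imprecision in Step 4 is worth flagging. The identity $(f\cup g)\circ h = (f\circ h)\cup g \pm f\cup(g\circ h)$ (insertion of $h$ into a cup product) actually holds \emph{on the nose} at the cochain level, since $h$ lands either in the inputs of $f$ or in those of $g$; so half of the Leibniz rule for the bracket is exact. The half that fails, $h\circ(f\cup g)$ versus $(h\circ f)\cup g \pm f\cup(h\circ g)$, is corrected by the coboundary of the \emph{two-variable brace} $h\{f,g\}$ (simultaneous insertion of $f$ and $g$ into disjoint slots of $h$), which is a genuinely new bilinear operation and is \emph{not} expressible in terms of $\circ$ and $\cup$ alone, contrary to what you assert. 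This does not invalidate the argument --- the homotopy exists and is explicit --- but if you carry out Step 4 you will need to introduce that brace operation (which is, not coincidentally, the germ of the operad $\Br$ appearing later in this paper). With that correction your proof is complete and is the proof the paper implicitly relies on.
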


\subsection{Multidifferential operators}\label{Dpoly as hochschild}
Let $M$ be an oriented manifold. One of the central objects of this paper is the space of multidifferential operators of $M$, which are a smooth analog of the Hochschild cochain complex.

\begin{defi}

Let $A= C^\infty(M)$, the algebra of smooth functions of $M$. The space of multidifferential operators $\Dpoly^\bullet (M)$ or just $\Dpoly$ if there is no ambiguity, is a subcomplex of $C^\bullet (A)$, given by

$$ \Dpoly^n  =  \left\lbrace D\colon C^\infty(M)^{\otimes n} \to C^\infty(M) \Big|  D \stackrel{\text{locally}}{=} \sum f\frac{\partial}{\partial x_{I_1}}\otimes \dots\otimes \frac{\partial}{\partial x_{I_n}}\right\rbrace,$$
where the $I_j$ are finite sequences of indices between $1$ and $dim(M)$ and $\frac{\partial}{\partial x_{I_j}}$ is the multi-index notation representing the composition of partial derivatives.
\end{defi}

We will now describe an action of the group $C_{n+1} = \langle\sigma_n | \sigma_n^{n+1} = e \rangle$ on $\Dpoly^n$.

Since every multidifferential operator is uniquely determined by evaluation on the compactly supported functions $C^\infty_c (M)$, then, $\Dpoly^n$, for $n\geq 1$ can be seen as a subspace of $\Hom(C^\infty_c (M)^{\otimes n} , C^\infty_c (M)$. One can equally see $\Dpoly$ as a subspace of $\Hom(C^\infty_c (M)^{\otimes n+1} , \mathbb R)$ in the following way: 

Let us denote by $\vol$ the given volume form $M$. We identify $D \in \Dpoly^n \subset \Hom(C^\infty_c (M)^{\otimes n} , C^\infty_c (M))$, with 
$$\left[f_1\otimes \dots \otimes f_{n+1} \mapsto \int_{M} f_1D(f_2,\dots,f_{n+1}) \vol\right] \in \Hom(C^\infty_c (M)^{\otimes n+1} , \mathbb R).$$

The reverse identification can be obtained by integrating by parts in order to remove differential operators from $f_1$.

 From now on we drop the $M$ as the domain of integration and the $\vol$ to make the notation lighter.

There is an action of $C_{n+1}$ on $\Dpoly^n\subset \Hom(C^\infty_c (M)^{\otimes n+1} , \mathbb R)$ is given by the cyclic permutation of the inputs. 

$$\int f_1D(f_2,\dots,f_{n+1}) = \int f_2 D^\sigma(f_3\dots,f_{n+1},f_1).$$

\begin{defi}%\cite[Chapter 2]{cyclic homology}
The Connes $B$ operator on $\Dpoly$, is the map $B\colon \Dpoly^\bullet \to \Dpoly^{\bullet-1}$ defined for all $D\in \Dpoly^n$ by

$$B(D)(f_1,\dots,f_{n-1}) =\sum_{k=0}^n (-1)^k D^{\sigma^k}(1,f_1,\dots,f_{n-1}),\quad  \forall f_i\in C^\infty(M).$$
\end{defi}

\begin{proposition}
The $B$ operator induces a well defined map in the cohomology of $\Dpoly$ and together with the Lie bracket and cup product defined in the previous section induces a $BV$-algebra structure in $H^\bullet(\Dpoly)$.
\end{proposition}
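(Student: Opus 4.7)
The result is an instance of the (by now classical) theorem that for a commutative associative algebra equipped with a cyclically invariant trace, the Hochschild cohomology carries a BV structure in which $\Delta$ is induced by Connes' $B$ operator. Here $A=C^\infty(M)$ is commutative and the trace $f\mapsto \int f\,\vol$ is automatically cyclically invariant, so the general machinery (compare Tradler's theorem for symmetric Frobenius algebras) applies once one checks it is compatible with the smoothness constraint defining $\Dpoly\subset C^\bullet(A)$. My plan would be to split the proof into four steps.

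First, I would verify the two purely formal properties of $B$ by direct manipulation on $\Dpoly$, using the trace presentation $\langle D, f_0\otimes\cdots\otimes f_n\rangle := \int f_0 D(f_1,\dots,f_n)$ under which the $C_{n+1}$-action becomes cyclic permutation of all $n+1$ slots. The identity $B^2=0$ follows because $B^2(D)$ is a double sum of cyclic rotations containing two inserted units $1$; grouping terms by the relative position of the two units and using cyclic invariance of the trace, the pairs cancel with opposite signs. The identity $[d,B]=0$ is checked by expanding $d=[\mu,\cdot]$, moving the inserted $1$ past $\mu$ using $\mu(1,f)=\mu(f,1)=f$, and again pairing terms via cyclicity. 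This shows $B$ is a chain map, hence descends to $H^\bullet(\Dpoly)$, and squares to zero.

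Second, I would show the key BV relation on cochains up to homotopy. Concretely, I would construct a bilinear operation $H\colon \Dpoly^m\otimes \Dpoly^n\to \Dpoly^{m+n-1}$, modeled on Tradler's operator: insert the unit $1$ and a second dummy argument into $f\cup g$ at two positions, sum over all intermediate cyclic rotations with appropriate signs, then reconstitute the result as a multidifferential operator via the trace pairing. The aim is to prove
\begin{equation*}
[f,g] - \bigl(B(f\cup g) - B(f)\cup g - (-1)^{|f|} f\cup B(g)\bigr) = d H(f,g) + H(df,g) + (-1)^{|f|} H(f,dg),
\end{equation*}
which is the standard seven-term identity. The verification is a careful bookkeeping of cyclic permutations of the arguments $1, f_1,\dots, f_{m+n}$, using commutativity of $\mu$ (equivalently, cyclic invariance of the trace) to match terms on both sides. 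Passing to cohomology eliminates the right-hand side, giving the defining BV relation $[f,g] = B(f\cup g) - B(f)\cup g - (-1)^{|f|} f\cup B(g)$.

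Third, the fact that $B$ is a derivation of the Lie bracket on cohomology is a formal consequence of the previous identity together with the Gerstenhaber compatibility between $[-,-]$ and $\cup$ already available in $H^\bullet(\Dpoly)$: applying $B$ to the Poisson identity and substituting the BV relation, the derivation property of $B$ with respect to the bracket drops out as an algebraic manipulation in any Gerstenhaber algebra with a square-zero operator satisfying the BV relation.

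The main obstacle is Step 2, i.e.\ producing the explicit homotopy $H$ and verifying the seven-term identity on the nose. The combinatorics of cyclic rotations combined with the graded Leibniz signs is the only delicate part; the rest of the argument is either a direct computation or a formal consequence of the BV identity once it holds in cohomology. Since all ingredients — the cup product, the bracket, and $B$ — are clearly internal to $\Dpoly$ (insertion and unit preserve the subcomplex of multidifferential operators), the whole argument restricts without modification from $C^\bullet(A)$ to $\Dpoly$.
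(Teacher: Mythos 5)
Your proposal is a correct plan, but it takes a genuinely different route from the paper. The paper does not prove this proposition by hand at all: it explicitly defers the statement to the operadic machinery developed later, namely that the Cyclic Braces operad $\CBr$ acts on $\Dpoly$ (Section \ref{Dpoly}, via the $\CPT$-action built from the cyclic structure and the forgetful map, twisted and restricted to the subquotient $\CBr$) and that $H(\CBr)\cong\BV$ (Lemma \ref{H(CBR)=BV} and its corollary). The BV structure on $H^\bullet(\Dpoly)$ is then the induced action of $H(\CBr)$ in cohomology. Your argument is the explicit ``by hand'' verification the paper alludes to: a Tradler-type proof using the cyclically invariant pairing $\langle D, f_0\otimes\cdots\otimes f_n\rangle=\int f_0D(f_1,\dots,f_n)\vol$, an explicit homotopy $H$ for the seven-term identity, and the formal derivation of the compatibility of $\Delta$ with the bracket. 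Both routes are valid. Yours is more elementary and self-contained, and correctly identifies that the only real work is the construction of $H$ and the sign bookkeeping; the paper's route front-loads that work into the operadic framework but is rewarded with much more, namely the lift to a genuine $\CBr_\infty$-structure at the chain level, which is what Theorem \ref{main theorem} actually requires and which your cohomology-level argument cannot produce.

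One small caveat on your Step 1: the formula $B(D)=\sum_k(-1)^kD^{\sigma^k}(1,\dots)$ is the \emph{normalized} Connes operator (the composite $sN$ without the $(1-t)$ prefactor), so the identities $B^2=0$ and $dB+Bd=0$ hold on the nose only on the normalized subcomplex, or else up to terms that are killed by degeneracies; your cancellation-in-pairs argument should be carried out there, or one should use $B=(1-t)sN$ on the unnormalized complex, where $N(1-t)=0$ and $b's+sb'=\id$ give both identities immediately. This does not affect the induced map in cohomology, but it is a point your sketch would need to make explicit.
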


The proposition can be proved ``by hand", but also will also follow from the result that the operad $\CBr$, whose homology is the $\BV$ operad, acts on $\Dpoly$.
\subsection{Multivector fields}
\begin{defi}
Let $M$ be an oriented manifold. The graded vector space $\Tpoly(M)$ or just $\Tpoly$ of multivector fields on $M$ is 
$$\Tpoly^\bullet = \Gamma(M, {\bigwedge}^\bullet T_M),$$
where $T_M$ is the tangent bundle of $M$.
\end{defi}

$\Tpoly$ has a natural Gerstenhaber algebra structure by taking as product the wedge product of multivector fields and as bracket, the Schouten-Nijenhuis bracket, i.e., the unique $\mathbb R$-linear bracket satisfying
$$[X,Y\wedge Z] = [X,Y]\wedge Z + (-1)^{(|X|-1)(|Y|-1)}Y\wedge [X,Z],\ \forall X,Y,Z\in \Tpoly^\bullet$$
that restricts to the usual Lie bracket of vector fields.

We can define a map $f\colon \Tpoly^\bullet(M) \to \Omega^{n-\bullet}_{dR}(M)$ that sends a multivector field to its contraction with the volume form of $M$.

This map is easily checked to be an isomorphism of vector spaces. We define the divergence operator div to be the pullback of the de Rham differential via $f$, i.e. div$=f^{-1}\circ d_{dR}\circ f$.

A series of straightforward calculations prove the following:

\begin{proposition}
The space $\Tpoly^\bullet (M)$, with the wedge product, the  Schouten-Nijenhuis bracket and the divergence operator forms a $BV$-algebra.
\end{proposition}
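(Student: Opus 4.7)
The plan is to verify the three BV-algebra axioms involving $\mathrm{div}$: (i) it has degree $-1$ and squares to zero, (ii) the Schouten-Nijenhuis bracket measures the failure of $\mathrm{div}$ to be a derivation of $\wedge$, and (iii) $\mathrm{div}$ is a derivation of $[\cdot,\cdot]_{SN}$. The graded commutativity of $\wedge$ and the Gerstenhaber axioms between $\wedge$ and $[\cdot,\cdot]_{SN}$ are already recorded in the preceding paragraph.

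Point (i) is immediate: since $f$ identifies $\Tpoly^k(M)$ with $\Omega^{\dim M-k}_{dR}(M)$ and $d_{dR}$ raises the de Rham degree by one and squares to zero, $\mathrm{div}=f^{-1}\circ d_{dR}\circ f$ lowers the $\Tpoly$-degree by one and squares to zero.

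The substantive step is (ii), namely the BV identity
$$[X,Y]_{SN} = \mathrm{div}(X\wedge Y) - \mathrm{div}(X)\wedge Y - (-1)^{|X|}\, X \wedge \mathrm{div}(Y).$$
Both sides are local differential operators, so I pass to coordinates in which $\vol = dx_1\wedge\cdots\wedge dx_n$. A direct computation of $f^{-1}\circ d_{dR}\circ f$ on a basic multivector then yields
$$\mathrm{div}\bigl(g\,\partial_{i_1}\wedge\cdots\wedge \partial_{i_k}\bigr) = \sum_{j=1}^{k} (-1)^{j-1} \frac{\partial g}{\partial x_{i_j}}\, \partial_{i_1}\wedge\cdots \wedge \widehat{\partial_{i_j}}\wedge\cdots\wedge \partial_{i_k}.$$
With this explicit expression, (ii) reduces to a check on generators: on pairs of degree $\le 1$ (smooth functions and coordinate vector fields), both sides evaluate to the same classical expression—zero, the action of a vector field on a function, or the coordinate bracket of two vector fields. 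Both sides then extend uniquely to all of $\Tpoly$ as graded bi-derivations for $\wedge$. For the left-hand side this is the defining property of the Schouten-Nijenhuis bracket recalled just before the proposition; for the right-hand side it is equivalent to $\mathrm{div}$ being a second-order differential operator with respect to $\wedge$, which is visible from the displayed local formula.

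Once (i) and (ii) are established, (iii) is formal (the Koszul lemma): substitute the BV identity of (ii) into $\mathrm{div}^2(X\wedge Y)=0$, expand $\mathrm{div}(\mathrm{div}(X)\wedge Y)$ and $\mathrm{div}(X\wedge \mathrm{div}(Y))$ using (ii) again, and collect terms; the double-$\mathrm{div}$ contributions vanish, the mixed products $\mathrm{div}(X)\wedge\mathrm{div}(Y)$ cancel in pairs, and what remains is exactly the graded Leibniz rule for $\mathrm{div}$ applied to $[X,Y]_{SN}$. The sole non-routine step in the whole argument is (ii); the rest is degree and sign bookkeeping, and the main obstacle is producing the explicit coordinate formula for $\mathrm{div}$ needed to check the BV identity on generators.
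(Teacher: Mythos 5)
Your proposal is correct. The paper offers no argument for this proposition beyond the remark that it follows from ``a series of straightforward calculations''; your outline --- the degree and squaring of $\mathrm{div}$ read off from $f^{-1}\circ d_{dR}\circ f$, the seven-term BV identity checked on generators via the local coordinate formula together with the second-order property of $\mathrm{div}$, and the derivation-over-the-bracket axiom deduced formally from $\mathrm{div}^2=0$ and the identity just established --- is a correct and efficiently organized way of carrying out exactly those calculations, modulo the sign conventions for the contraction $f$ that you already flag as routine bookkeeping.
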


\section{Cyclic Swiss Cheese type operads}

\subsection{Cyclic operads}
The standard notion of an operad is used in order to describe operations on a certain vector space with a given number of inputs and one output. A symmetric operad is used when one wants to take into consideration the symmetries on the inputs. The notion of a \textit{cyclic operad} \cite{Cyclic Operads, Loday-Vallette}, introduced by Getzler and Kapranov, arises when one considers the output as an \textit{additional input} that can be cyclically permuted along with the remaining inputs.
This can arise naturally in many situations, for example, when one is given a finite dimensional vector space $V$ equipped with a non-degenerate symmetric bilinear form, the space $\Hom(V^{\otimes n},V)$ can be identified with $\Hom(V^{\otimes n+1}, \mathbb R)$.

\begin{defi}
A cyclic operad on a symmetric monoidal category $(\mathcal C,\otimes, I, s)$ is the data of a non-symmetric operad $\cP$ and a right action of $C_{n+1}= \langle\sigma_n | \sigma_n^{n+1} = e \rangle$, the symmetric group of order $n+1$ on $\cP(n)$ satisfying the following axioms:

\begin{enumerate}[a)]

\item  The cyclic action on the unit in $\cP(1)$ is trivial.

\item For every $m,n\geq 1$, the diagram

\begin{tikzcd}
\cP(m)\otimes\cP(n) \arrow{r}{\circ_1} \arrow{d}{\sigma_m\otimes \sigma_n} & \cP(m+n-1)\arrow{dd}{\sigma_{m+n-1}}\\
\cP(m)\otimes \cP(n) \arrow{d}{s} &   \\
\cP(n) \otimes \cP(m) \arrow{r}{\circ_n} & \cP(m+n-1)\\
\end{tikzcd}

commutes.

\item For every $m,n\geq 1$ and $2\leq i \leq m$, the following diagram commutes:

\begin{tikzcd}
\cP(m)\otimes\cP(n) \arrow{r}{\circ_i} \arrow{d}{\sigma_m\otimes \id_{\cP(n)}} & \cP(m+n-1)\arrow{d}{\sigma_{m+n-1}}\\
\cP(m) \otimes \cP(n) \arrow{r}{\circ_{i-1}} & \cP(m+n-1).\\
\end{tikzcd}

\end{enumerate}
\end{defi} 

\subsection{Operad of Cyclic Swiss Cheese type}

\begin{defi}
Let $\cP$ be a 2-colored operad that is non-symmetric in color 2. We say that $\cP$ is of  Swiss Cheese type if $\cP^1(m,n) = 0$ if $n>0$.

Furthermore, $\cP$ is said to be of Cyclic Swiss Cheese type if these two following additional conditions hold:\begin{itemize}
\item The cyclic group of order $n+1$, $C_{n+1}$ acts on the right on $\cP^2(m,n)$ satisfying the same axioms as the axioms of a cyclic operad,
\item The cyclic action is $\cP^1$ equivariant,
\item There is a distinguished element $\mathbbm 1_{\cP}\in \cP^2(0,0)$.
\end{itemize}
\end{defi}

 For simplicity of notation we denote $\cP^1(m,0)$ by $\cP^1(m)$. Using the distinguished element $\mathbbm 1_{\cP}$ we define the ``forgetful" map $\F \colon \cP^2(m,n) \to \cP^2(m,n-1)$
 by $\F(p) = p^{\sigma_n}(\id_{\cP^1},\dots, \id_{\cP^1};\id_{\cP^2},\dots,\id_{\cP^2},\mathbbm 1_{\cP})$.

A morphism $\cP\to\cQ$ of Cyclic Swiss Cheese type operads is a colored operad morphism that is equivariant with respect to the cyclic action and sends $\mathbbm 1_{\cP}$ to $\mathbbm 1_{\cQ}$.

\subsection{Examples}

\subsubsection{Multidifferential operators as an operad}\label{Dpoly}

Let $M$ be an oriented manifold. The operad of multidifferential operators  $\TDpoly(M)$, or just $\TDpoly$, is a differential graded operad concentrated in degree zero with zero differential given by
$$ \TDpoly^n := \TDpoly(n)  =  \left\lbrace D\colon C^\infty(M)^{\otimes n} \to C^\infty(M) \Big|  D\stackrel{\text{loc.}}{=} \sum f\frac{\partial}{\partial x_{I_1}}\otimes \dots\otimes \frac{\partial}{\partial x_{I_n}}\right\rbrace.\footnote{This is almost the object introduced in Section \ref{Dpoly as hochschild}. The tilde is a reminder that there is no grading or differential.}$$

The operadic structure is the one induced by the endomorphisms operad of $C^\infty(M)$, i.e., given by composition of operators. 
 As any other operad, $\TDpoly$ can be seen as a 2-colored operad simply by declaring that there are no operations with inputs or outputs in color 1.   To endow $\TDpoly$ with a Cyclic Swiss Cheese Operad type structure we use the cyclic action defined in Section \ref{Dpoly as hochschild} and the distinguished element $\mathbbm 1 \in \TDpoly^0 = C^\infty(M)$ is defined to be the constant function $1$. 

For every $D \in \TDpoly^n \subset \Hom(C^\infty_c (\mathbb R^d)^{\otimes n} , C^\infty_c (\mathbb R^d))$ we have $$\F(D)= \int D(\cdot) \vol \in \Hom(C^\infty_c (\mathbb R^d)^{\otimes n} , \mathbb R). $$

\subsubsection{Configurations of framed points}\label{FFM}
The Fulton-MacPherson topological operad $\FM_2$, introduced by Getzler and Jones \cite{Getzler-Jones} is constructed in such a way that the $n$-ary space $\FM_2(n)$ is a compactification of the configuration space of points labeled $1,\dots,n$ in $\mathbb R^2$, modulo scaling and translation. The spaces $\FM_2(n)$ are manifolds with corners with each boundary stratum representing a set of points that got infinitely close.

The first few terms are
\begin{itemize}
	\item $\FM_2(0) = \emptyset$,
	\item $\FM_2(1) = \{*\}$,
	\item $\FM_2(2) = S^1$.
\end{itemize}
The operadic composition $\circ_i$ is given by inserting a configuration at the boundary stratum at the point labeled by $i$.
 For details on this construction see also  \cite[Part IV]{FM} or \cite{Kontsevich}.

\begin{defi}
Let $\cP$ be a topological operad such that there is an action of topological group $G$ on every space $\cP(n)$ and the operadic compositions are $G$-equivariant. The semi-direct product $G\ltimes \cP$ is a topological operad with $n$-spaces

 $$(G\ltimes \cP)(n) = G^n \times \cP(n),$$ 
and composition given by $$(\overline{g},p) \circ_i (\overline{g'},p') = \left(g_1,\dots, g_{i-1}, g_ig_1',\dots,g_ig_m',g_{i+1}, \dots,g_n , p\circ_i (g_i \cdot p')\right),$$ where $\overline g = (g_1,\dots,g_n)$ and $\overline {g'}=(g'_1,\dots,g'_m) $.

\end{defi}

The topological group $S^1$ acts on $\FM$ by rotations.
We define the Framed Fulton-MacPherson topological operad $\FFM_2$ to be the semi-direct product $S_1 \ltimes \FM_2$. Equivalently, $\FFM_2(n)$ is the compactification of the configuration space of points modulo scaling and translation such that at every point we assign a frame, i.e., an element of $S^1$. When the operadic composition is performed, the configuration inserted rotates according to the frame on the point of insertion.

We denote by $\mathbb H_{m,n}$, the space of configurations of $m$ points in the upper half plane labeled by $1,\dots,m$ and $n$ points at the boundary, labeled by $\overline 1,\dots, \overline n$, modulo scaling and horizontal translations, with a similar compactification. Similarly, $\mathsf{F}\mathbb H_{m,n}$ shall be the compactification of the space of configurations of $m$ framed points in the upper half plane and $n$ non-framed points at the boundary. These spaces are considered unital in the sense that $\mathsf{F}\mathbb H_{0,0}$ is topologically a point, instead of the empty space.

Together they form a Swiss Cheese type topological operad $\cP$, with $\cP^1 = \FFM_2$ and 
$\cP^2= \mathsf F\mathbb H$ with composition of color $2$ being insertion of the corespondent configuration in the boundary stratum and composition of color $1$ on the vertex labeled by $i$ being the insertion at the boundary stratum at the point $i$ after applying the corresponding rotation given by the frame of $i$. We shall consider that a framing pointing upwards represents the identity of $S^1$, see Figure \ref{composition ffm}.

\begin{figure}

\begin{tikzpicture}[scale=1]

\node[label=left:$1$][fill,circle,scale=0.25] (1) at (1.8,1) {};
\node (v1) at (1,0) {};
\node (v2) at (3,0) {};

\draw [-] (v1)--(v2);
\draw[thick] (1) --  +(-45:0.25cm);
\draw [dotted] (1)-- +(90:1cm);
\draw (1)+(0,0.2) arc (90:-45:0.18cm);

\node  at (2.1,1.3)  {$\psi$};

\end{tikzpicture}
 $\circ_1$
  \begin{tikzpicture}[scale=1]

\node[label=left:$1$][fill,circle,scale=0.25] (1) at (1.8,1)  {};
\path (1) -- +(-35:0.5cm) node[label=below:$2$][fill,circle,scale=0.25] (2) {};

\draw[thick] (1) --  +(130:0.25cm);
\draw[thick] (2) --  +(10:0.25cm);
\draw [dotted] (1)-- +(90:1cm);
\draw (1)+(0,0.2) arc (90:-35:0.18cm);
\draw [dotted] (1)--(2);

\node  at (2.1,1.3)  {$\varphi$};
\end{tikzpicture}
=
\begin{tikzpicture}[scale=1]

\node[label=above left:$\small{1}$][fill,circle,scale=0.25] (1) at (1.8,1)  {};
\draw [dotted] (1)--+(-170:0.5cm) node[label=left:$\small{2}$][fill,circle,scale=0.25] (2)  {};

\node (v1) at (1,0) {};
\node (v2) at (3,0) {};

\draw [-] (v1)--(v2);

\draw [dotted] (1)-- +(90:1cm);
\draw (1)+(0,0.2) arc (90:-170:0.18cm);

\draw[thick] (1) --  +(-5:0.25cm);
\draw[thick] (2) --  +(-125:0.25cm);

\node  at (2.3,0.75)  {\footnotesize{$\psi+\varphi$}};

\end{tikzpicture}
\caption{Composition of an element of $\FFM_2$ with an element in $\mathsf{F}\mathbb H$.}\label{composition ffm}
\end{figure}

In fact they can be endowed with a Cyclic Swiss Cheese type operad structure. 

The open upper half plane is isomorphic to the Poicar\'e disk via a conformal (angle preserving) map. This isomorphism sends the boundary of the plane to the boundary of the disk except one point, that we label by $\infty$. We define the cyclic action of $C_{n+1}$ in $\FFM_2(m,n)$ by cyclic permutation of the point labeled by infinity with the other points at the boundary.

\begin{figure}[h]
\begin{tikzpicture}

\node (pic1) at (0,0)  
{\begin{tikzpicture}
\draw (0,0) arc (90:45:1cm) node [label= $\overline 1$] (1)[int]{} 
arc (45:10:1cm) node [label=right:$\overline 2$] (2)[int]{} 
arc (10:-45:1cm) node [label=right:$\overline 3$] (3)[int]{} 
arc (-45:-100:1cm) node [label=below:$\dots$]{} 
arc (-100:-180:1cm) node [label=left:$\overline{n-1}$] [int]{} 
arc (-180:-230:1cm) node [label=left:$\overline n$][int]{} 
arc (-230:-270:1cm) node [label=$\infty$][int]{} ;
\end{tikzpicture}};

\node (text) at (2.5,0) {$\cdot \sigma\quad =$};

\node (pic2) at (5,0) 
{\begin{tikzpicture}
\draw (0,0) arc (90:45:1cm) node [label= $\infty$] [int]{} 
arc (45:10:1cm) node [label=right:$\overline 1$] [int]{} 
arc (10:-45:1cm) node [label=right:$\overline 2$] (3)[int]{} 
arc (-45:-100:1cm) node [label=below:$\dots$]{} 
arc (-100:-180:1cm) node [label=left:$\overline{n-2}$] [int]{} 
arc (-180:-230:1cm) node [label=left:$\overline {n-1}$][int]{} 
arc (-230:-270:1cm) node [label=above:$\overline n$][int]{};
\end{tikzpicture}};
\end{tikzpicture}
\end{figure}

The element $\mathbbm 1$ is defined to be the unique point in $\mathsf F\mathbb H_{0,0}$. Insertion of this element represents forgetting a certain point at the boundary.

The forgetful map is defined by forgetting the point at infinity and labeling the first point as the new $\infty$ point and the previous $\overline n$ becomes the new $\overline{n-1}$.

$$
\begin{tikzpicture}
\node at (-2.5,0) {$\F$};

\node at (0,0) {
\begin{tikzpicture}
\draw (0,0) arc (90:45:1cm) node [label= $\overline 1$] (1)[int]{} 
arc (45:10:1cm) node [label=right:$\overline 2$] (2)[int]{} 
arc (10:-45:1cm) node [label=right:$\overline 3$] (3)[int]{} 
arc (-45:-100:1cm) node [label=below:$\dots$]{} 
arc (-100:-180:1cm) node [label=left:$\overline{n-1}$] [int]{} 
arc (-180:-230:1cm) node [label=left:$\overline n$][int]{} 
arc (-230:-270:1cm) node [label=$\infty$][int]{} ;
\end{tikzpicture}};

\node at (2,0) {$=$};

\node at (4,0) {
\begin{tikzpicture}
\draw (0,0) arc (90:45:1cm) node [label= $\infty$] [int]{} 
arc (45:10:1cm) node [label=right:$\overline 1$] [int]{} 
arc (10:-45:1cm) node [label=right:$\overline 2$] (3)[int]{} 
arc (-45:-100:1cm) node [label=below:$\dots$]{} 
arc (-100:-180:1cm) node [label=left:$\overline{n-2}$] [int]{} 
arc (-180:-230:1cm) node [label=left:$\overline {n-1}$][int]{} 
arc (-230:-270:1cm) node {} ;
\end{tikzpicture}};
\end{tikzpicture}
$$
\subsubsection{Two kinds of graphs}\label{BVKGra}

A directed graph $\Gamma$ is the data of a finite set of vertices, $V(\Gamma)$ and a set of edges $E(\Gamma)$ consisting of ordered pairs of vertices, that is, a subset of $V(\Gamma)\times V(\Gamma)$. Notice that tadpoles (edges connecting a vertex to itself) are allowed.\\

Let $\BVKGra'(m,n)$ be the graded vector space spanned by directed graphs with $m$ vertices of \textit{type I} labeled with the numbers $\{1,\dots, m\}$, $n$ labeled with the numbers $\{ \overline 1,\dots, \overline n\}$ of \textit{type II} and edges labeled with the numbers $\{1,\dots,\#\text{edges}\}$, such that there are no edges starting on a vertex of type II. The degree of a graph is $-\#\text{edges}$, i.e., every edge has degree -1.
For every non-negative integer $d$, there is an action of $\mathbb S_{d}$ on $\CPT'_{-d}(n)$ by permutation of the labels of the edges.

We define the space $\BVKGra$ of BV Kontsevich Graphs by 
$$\BVKGra (m,n) := \bigoplus_d \BVKGra'_{-d}(m,n) \otimes_{\mathbb S_{d}} \sgn_d, $$
where $\sgn_d$ is the sign representation.

We define the space of BV Graphs, $\BVGra(n):= \BVKGra(n,0)$. There is a natural $\mathbb S_n$ action by permutation of the labels and we define a symmetric operad structure in $\BVGra$ by setting the composition $\Gamma_1 \circ_i \Gamma_2$ to be the insertion of $\Gamma_2$ in the $i$-th vertex of $\Gamma_1$ and sum over all possible ways of connecting the edges incident to $i$ to $\Gamma_2$.

We can form a Swiss Cheese type operad by setting $\BVGra$ to be the operations in color $1$ and $\BVKGra$ to be the operations in color $2$, considering the symmetric action permuting the labels of type I vertices and ignoring the symmetric action of type II vertices. The partial compositions are given as in $\BVGra$, i.e., by insertion on the corresponding vertex and connecting in all possible ways. 

The type II vertices in $\BVKGra$ will be later seen as \textit{boundary vertices} when we relate $\BVKGra$ with $\mathsf F \mathbb H$, and since we wish to distinguish between $\BVGra(\cdot)$ and $\BVKGra(\cdot,0)$, we draw the latter with a line passing by the type II vertices.

 \begin{tikzpicture}[scale=1]

\node[label=$1$] (1) at (1.4,1) [int] {};
\node[label=$2$] (2) at (2.2,0.6) [int] {};
\node[label=$3$] (3) at (2.8,2) [int] {};
\node[label=$4$] (4) at (0.2,1.4) [int] {};
\node[label=below:$\overline 1$] (b1) at (0,0) [int] {};
\node[label=below:$\overline 2$] (b2) at (1,0) [int] {};
\node[label=below:$\overline 3$] (b3) at (2,0) [int] {};
\node[label=below:$\overline 4$] (b4) at (3,0) [int] {};

\draw [->] (1)--(b3);
\draw [->] (2)--(b2);
\draw [->] (2)--(3);
\draw [->] (4)--(b3);
\draw [->] (4) --  (1);
\draw [->] (3)  to [out=170,in=60] (b1);
\draw [->] (3)--(b4);

\path[->] (4) edge  [loop left] ();
%\draw [->] (4)  to [out=170,in=60] (4);

\draw  (-0.5,0)--(3.5,0);
\end{tikzpicture}

The space $\BVKGra(m,n)$ forms a graded commutative  algebra with product of two graphs defined by superposing the vertices and taking the union of the edges. This algebra generated by 

$\Gamma^i_j:=$
 \begin{tikzpicture}[scale=1]

\node[label=$i$] (i) at (1.4,1) [int] {};
\node[label=below:$\overline 1$] (v1) at (1,0) [int] {};
\node at (1.5,-0.4) {\dots};
\node[label=below:$\overline j$] (j) at (2,0) [int] {};
\node at (2.5,-0.4) {\dots};
\node[label=below:$\overline n$] (n) at (3,0) [int] {};

\draw [->] (i)--(j);
\draw  (0.5,0)--(3.5,0);
\end{tikzpicture},with $1\leq i\leq m$ and $1\leq j \leq n$ 
and 

$\Gamma^{i,j}:=$ 
 \begin{tikzpicture}[scale=1]

\node[label=$i$] (i) at (1.4,1) [int] {};
\node[label=below:$\overline 1$] (v1) at (1,0) [int] {};
\node (dots) at (2,-0.4) {\dots};
\node[label=below:$\overline n$] (n) at (3,0) [int] {};
\node[label=$j$] (j) at (2,1.3) [int] {};

\draw [->] (i)--(j);
\draw  (0.5,0)--(3.5,0);
\end{tikzpicture},with $1\leq i,j\leq m$. For simplicity, the dependance of $m$ and $n$ is dropped from the notation.\\

Let $\Gamma^i_j \in \BVKGra(m,n)$. The action of the generator $\sigma$ of $C_{n+1}$ on $\Gamma^i_j$ is $\sigma(\Gamma^i_j)= \Gamma^i_{j-1}$ if $j\ne 1$ and 
 $\sigma(\Gamma^i_1) = -\sum_{k=1}^n \Gamma^i_k - \sum_{k=1}^m \Gamma^{i,k}$.
 The action of $\sigma$ on $\Gamma^{i,j}\in\BVKGra(m,n)$ is $\sigma(\Gamma^{i,j})= \Gamma^{i,j}$, for $1\leq i,j\leq m$.

We define the cyclic action of $C_{n+1} = \langle\sigma | \sigma^{n+1}=e\rangle$ on one-edge graphs of $\BVKGra(m,n)$ by $\sigma(\Gamma^i_j)= \Gamma^i_{j-1}$ if $j\ne 1$ and 
 $\sigma(\Gamma^i_1) = -\sum_{k=1}^n \Gamma^i_k - \sum_{k=1}^m \Gamma^{i,k}$.
 The action of $\sigma$ on $\Gamma^{i,j}\in\BVKGra(m,n)$ is defined by $\sigma(\Gamma^{i,j})= \Gamma^{i,j}$, for $1\leq i,j\leq m$.
 
Since $\sigma^2(\Gamma^i_1) = \Gamma^i_m$, we have that $\sigma^{n+1}$ acts as the identity in every one-edge graph, therefore the action of $C_{n+1}$ is well defined.

 We extend this action to $\BVKGra(m,n)$ by declaring that the action distributes over a product of graphs (i.e., making the cyclic action a morphism of unital algebras).
 
 The element $\mathbbm 1\in \BVKGra(0,0)$ is the empty graph, the unique graph with no vertices. The insertion $\Gamma \circ_j \mathbbm 1$ is zero if there is any edge incident to the vertex labeled by $\overline j$ or, if there are no such edges, it forgets the vertex labeled by $j$.

\section{The Cyclic Braces operad}

In \cite{KS}, Kontsevich and Soibelman introduced an operad that they call minimal operad that acts naturally on the Hochschild cochain complex of $A_\infty$ algebras. They show that this operad is quasi-isomorphic to $\Ger$, the operad governing Gerstenhaber algebras (see also \cite{Homology of Braces}). In this paper we call this operad $\Br$, standing for Braces. In this section we introduce the Cyclic Braces operad, which is a refinement of the Braces operad that is meant to take into account the a unit and a cyclic action. A similar operad was constructed by Ward in \cite{Alternative paper}.

\subsection{The Cyclic Planar Trees operad}
Let $\CPT'(n)$ be the graded vector space spanned by rooted planar trees with vertices labeled with the numbers in $\{1,\dots, n\}$ with the additional feature that every vertex can have additional edges connecting to a symbol $\mathbbm 1$ and every vertex has a marked edge, that can be one of the additional edges\footnote{In fact, we want at most one edge connecting to $\mathbbm 1$ per vertex and for vertices having an edge connecting to $\mathbbm 1$ we want to force the marked edge to be that one, but imposing this condition directly would not be stable by the composition that we define below. This is resolved by considering a quotient of $\CPT'$, rather than a subspace.}. The non-root edges (including the ones connecting to $\mathbbm 1$) are labeled by the numbers $\{1,\dots, \#\text{edges}\}$. The degree of a rooted planar tree is $-\#\text{edges}$. For every non-negative integer $d$, there is an action of $\mathbb S_{d}$ on $\CPT'_{-d}(n)$ by permuting the labels of the edges.

We define the operad $\CPT$ of Cyclic Planar Trees by 
$$\CPT (n) := \bigoplus_d \CPT''_{-d}(n) \otimes_{\mathbb S_{d}} \sgn_d, $$
where  $\sgn_d$ is the sign representation and $\CPT''$ is the quotient of $\CPT'$ by trees in which there is a vertex is connected to an element $\mathbbm 1$ whose mark is not pointing towards $\mathbbm 1$.

The operadic composition $T_1 \circ_j T_2$ is given by inserting the tree $T_2$ in the vertex labeled $j$ of the tree $T_1$, orienting the root of $T_2$ with the marking at the vertex $j$ of $T_1$, forgetting both the root and the mark at the vertex $j$ and reconnecting all incident edges in all planar possible ways. 

Since it it unambiguous, for simplicity of the drawing we draw only a mark between two edges when some vertex is connected to $\mathbbm 1$.

\begin{example}
Examples of insertion:

\begin{tikzpicture}[scale=1]

\node (ro) at (0,0.6) {$*$};
\node (e0) at (0,0) [ext] {$1$};
\draw[ultra thick] (e0) -- +(-90:0.5cm);
\node (e1) at (-0.5,-1) [ext] {$2$};
\node (e2) at (0.5,-1) [ext] {$3$};

\draw (e0) -- +(0,0.6);
\draw (e1) -- (e0);
\draw (e2) -- (e0);

\draw[ultra thick] (e2) --  ($(e2)!0.4cm!(e0)$);
\draw[ultra thick] (e1) --  ($(e1)!0.4cm!(e0)$);
\end{tikzpicture}
$\circ_1$
\begin{tikzpicture}[scale=1]

\node (ro) at (0,0.6) {$*$};
\node (e0) at (0,0) [ext] {$1$};

\node (e1) at (-0.5,-1) [ext] {$2$};
\node (e2) at (0.5,-1) [ext] {$3$};

\draw (e0) -- +(0,0.7);
\draw (e1) -- (e0);
\draw (e2) -- (e0);
\draw[ultra thick] (e0) --  ($(e0)!0.4cm!(ro)$);
\draw[ultra thick] (e2) --  ($(e2)!0.4cm!(e0)$);
\draw[ultra thick] (e1) --  ($(e1)!0.4cm!(e0)$);
\end{tikzpicture}
\begin{tikzpicture}
\node at (0,0) {$=\displaystyle\sum_{\text{ways of connecting}}$};
\end{tikzpicture}
\begin{tikzpicture}

\node (ro) at (0,0.6) {$*$};

\node (e4) at (-1,-2) [ext] {$4$};
\node (e5) at (1,-2) [ext] {$5$};

\draw [dashed] (0,-0.5) circle (0.75);

\node (e1) at (0,-1.05) [ext] {$1$};

\node (e2) at (-0.35,-0.4) [ext] {$3$};
\node (e3) at (0.35,-0.4) [ext] {$2$};

\draw (0,0.25) -- (0,0.6);
\draw (e1) -- (e2);
\draw (e1) -- (e3);
\draw[ultra thick] (e1) --  +(0,-0.5);

\draw (e5)  to  (0.5,-1.1);
\draw (e4)  to  (-0.5,-1.1);

\draw[ultra thick] (e5) --  ($(e5)!0.4cm!(0.5,-1.1)$);
\draw[ultra thick] (e4) --  ($(e4)!0.4cm!(-0.5,-1.1)$);
\draw[ultra thick] (e2) --  ($(e2)!0.4cm!(e1)$);
\draw[ultra thick] (e3) --  ($(e3)!0.4cm!(e1)$);

%\draw [->] (3)  to [out=170,in=60] (b1);
\end{tikzpicture}

\begin{tikzpicture}[scale=1]

\node (ro) at (0,0.6) {$*$};
\node (e0) at (0,0) [ext] {$1$};
%\draw[ultra thick] (e0) -- +(-90:0.5cm);
\node (e1) at (-0.5,-1) [ext] {$2$};
\node (e2) at (0.5,-1) [ext] {$3$};

\draw (e0) -- +(0,0.6);
\draw (e1) -- (e0);
\draw (e2) -- (e0);

\draw[ultra thick] (e2) --  ($(e2)!0.4cm!(e0)$);
\draw[ultra thick] (e1) --  ($(e1)!0.4cm!(e0)$);
\draw[ultra thick] (e0) --  ($(e0)!0.4cm!(e1)$);

\end{tikzpicture}
$\circ_1$
\begin{tikzpicture}[scale=1]

\node (ro) at (0,0.6) {$*$};
\node (e0) at (0,0) [ext] {$1$};

\node (e1) at (-0.5,-1) [ext] {$2$};
\node (e2) at (0.5,-1) [ext] {$3$};

\draw (e0) -- +(0,0.7);
\draw (e1) -- (e0);
\draw (e2) -- (e0);

\draw[ultra thick] (e0) --  ($(e0)!0.4cm!(e2)$);
\draw[ultra thick] (e1) --  ($(e1)!0.4cm!(e0)$);
\draw[ultra thick] (e2) --  ($(e2)!0.4cm!(e0)$);

\end{tikzpicture}
\begin{tikzpicture}
\node at (0,0) {$=\displaystyle\sum_{\text{ways of connecting}}$};
\end{tikzpicture}
\begin{tikzpicture}

\node (ro) at (0,0.6) {$*$};

\node (e4) at (-1,-2) [ext] {$4$};
\node (e5) at (1,-2) [ext] {$5$};

\draw [dashed] (0,-0.5) circle (0.75);

\node (e1) at (-0.3,-0.95) [ext] {$1$};

\node (e3) at (-0.25,-0.15) [ext] {$3$};
\node (e2) at (0.35,-0.4) [ext] {$2$};

\draw (0,0.25) -- (0,0.6);
\draw (e1) -- (e2);
\draw (e1) -- (e3);

\draw (e5)  to  (0.5,-1.1);
\draw (e4)  to  (-0.42,-1.12);

\draw[ultra thick] (e2) --  ($(e2)!0.4cm!(e1)$);
\draw[ultra thick] (e3) --  ($(e3)!0.35cm!(e1)$);
\draw[ultra thick] (e1) --  ($(e1)!0.35cm!(e3)$);

\draw[ultra thick] (e4) --  ($(e4)!0.4cm!(-0.42,-1.12)$);
\draw[ultra thick] (e5) --  ($(e5)!0.4cm!(0.5,-1.1)$);

%\draw [->] (3)  to [out=170,in=60] (b1);
\end{tikzpicture}

%B operator:
% \begin{tikzpicture}[scale=1]
%
%\node (ro) at (0,0.6) {$*$};
%\node (e0) at (0,0) [ext] {$0$};
%\draw[ultra thick] (e0) -- +(-45:0.5cm);
%\draw (e0) -- +(0,0.6);
%
%\end{tikzpicture}

\end{example}

The operad is generated by $T^i_n$, $i = 1,\dots, n$ and $T^{i,i+1}_n$, $i=1,\dots, n$, see Figure \ref{generators of CPT}.

\begin{figure}[ht]\label{generators of CPT}
 \begin{tikzpicture}[scale=1]

\node (ro) at (1,1.7) {$*$};
\node (e0) at (1,1) [ext] {$1$};
\node (e1) at (0,0) [ext] {$2$};
\node (e2) at (.75,0) [ext] {$3$};
\node (ed) at (1.3,0) {$\dots$};
\node (e3) at (2,0) [ext] {\small $n$};
\draw (e0) -- +(0,.7);
\draw[-] (e1) edge (e0) (e2) edge (e0) (e3) edge (e0);
%\draw[ultra thick] (e0) --  ($(e0)!0.5cm!(ro)$); too thin!!
\draw[line width=1.8] (e0) -- +(90:0.5cm);

\draw[ultra thick] (e1) --  ($(e1)!0.4cm!(e0)$);
\draw[ultra thick] (e2) --  ($(e2)!0.4cm!(e0)$);
\draw[ultra thick] (e3) --  ($(e3)!0.4cm!(e0)$);

\end{tikzpicture}\quad
 \begin{tikzpicture}[scale=1]

\node (ro) at (0.25,1.6) {$*$};
\node (e0) at (0.25,1) [ext] {$1$};
\node (e1) at (-1,0) [ext] {$2$};
\node (e2) at (-0.25,0) [ext] {$3$};
\node  at (0.25,0) {$\dots$};
\node (ei) at (0.75,0) [ext] {\small $i$};
\node (ed) at (1.25,0) {$\dots$};
\node (en) at (1.75,0) [ext] {\small $n$};
\draw (e0) -- +(0,.6);
\draw[-] (e1) edge (e0) (e2) edge (e0) (ei) edge (e0) (en) edge (e0);
\draw[ultra thick] (e0) --  ($(e0)!0.4cm!(ei)$);

\draw[ultra thick] (e1) --  ($(e1)!0.4cm!(e0)$);
\draw[ultra thick] (e2) --  ($(e2)!0.4cm!(e0)$);
\draw[ultra thick] (en) --  ($(en)!0.4cm!(e0)$);
\draw[ultra thick] (ei) --  ($(ei)!0.4cm!(e0)$);

\end{tikzpicture} \quad
 \begin{tikzpicture}[scale=1]

\node (ro) at (0.25,1.6) {$*$};
\node (e0) at (0.25,1) [ext] {$1$};
\node (e1) at (-1,0) [ext] {$2$};
\node  at (-0.5,0) {$\dots$};
\node (ei) at (0,0) [ext] {\small $i$};
\node (eii) at (0.75,0) [ext] {\tiny{$i+1$}};
\node (ed) at (1.3,0) {$\dots$};
\node (en) at (1.75,0) [ext] {\small $n$};
\draw (e0) -- +(0,.6);
\draw[-] (e1) edge (e0) (ei) edge (e0) (eii) edge (e0) (en) edge (e0);
\draw[ultra thick] (e0) -- +(-80:0.5cm);

\draw[ultra thick] (ei) --  ($(ei)!0.4cm!(e0)$);
\draw[ultra thick] (e1) --  ($(e1)!0.4cm!(e0)$);
\draw[ultra thick] (eii) --  ($(eii)!0.5cm!(e0)$);
\draw[ultra thick] (en) --  ($(en)!0.4cm!(e0)$);

\end{tikzpicture}
\caption{$T^1_n$, $T^{i}_n$ and $T^{i,i+1}_n$, from left to right.}\label{generators of CPT}
\end{figure}
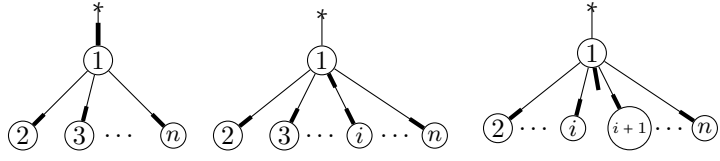

\subsection{Algebras over $\CPT$}\label{bimodule out of CSC}
The operad $\CPT$ acts naturally on spaces with cyclic structure.

\begin{proposition}
Let $\cP$ be an operad of Cyclic Swiss Cheese type. Its total space, $\prod_n \cP^2(\cdot,n)[-n]$ forms a $\CPT-\cP^1$-bimodule.
\end{proposition}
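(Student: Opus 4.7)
The plan is to define the left $\CPT$-action and the right $\cP^1$-action on the total space $T^\bullet := \prod_n \cP^2(\cdot,n)[-n]$ separately, verify that each is operadically associative, and then verify that the two actions commute.

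The right $\cP^1$-action is the immediate ingredient: an element of $\cP^1$ acts by operadic composition into the color-$1$ inputs of $\cP^2$. This restricts to a well-defined action on $T^\bullet$ because, by the $\cP^1$-equivariance of the cyclic action that is part of the definition of a Cyclic Swiss Cheese type operad, this composition commutes with $C_{n+1}$ and hence respects the grading shift $[-n]$.

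For the left $\CPT$-action I would proceed vertex-by-vertex. To each vertex $v$ of a tree $T\in\CPT(n)$ I assign the input $q_v\in\cP^2$; each internal edge from a parent $v$ to a child $w$ is interpreted as operadic composition in $\cP^2$ of the (recursively computed) subtree rooted at $w$ into a color-$2$ input of $q_v$; the planar ordering of the children read cyclically from the marked edge at $v$ fixes which color-$2$ slot receives each insertion. An edge from $v$ to a $\mathbbm 1$-symbol is interpreted as insertion of the distinguished element $\mathbbm 1_\cP\in\cP^2(0,0)$, which by the construction of the forgetful map $\F$ amounts to removing a color-$2$ input. The marked edge at the root vertex, combined with a cyclic rotation via $\sigma$, determines which slot of the final composite is read as the output; this is precisely the content of the generators $T^i_n$ and $T^{i,i+1}_n$ of Figure~\ref{generators of CPT}. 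One checks that this prescription descends from $\CPT'$ to $\CPT$ (i.e.\ respects the quotient $\CPT''$ forcing the mark at a $\mathbbm 1$-connected vertex to point towards $\mathbbm 1$), essentially because $\F$ is defined using that very cyclic rotation. Compatibility with the operadic composition of $\CPT$ (insertion of a subtree $T_2$ into a vertex of $T_1$, summed over planar reconnections of the incident edges) reduces to the $2$-colored operad associativity in $\cP$ together with the axioms of a cyclic operad: the sum over reconnection patterns matches the distribution of color-$2$ inputs to the slots of a composition in $\cP^2$, and the signs produced by reshuffling edge labels match the Koszul signs under the $\sgn_d$ twist that defines $\CPT$.

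Once both actions are in place, the bimodule condition is largely automatic: the right $\cP^1$-action modifies only color-$1$ inputs, while the left $\CPT$-action combines color-$2$ inputs and permutes the output via the cyclic action, so they act on disjoint coordinates. Their commutativity follows from the interchange law of the $2$-colored operad $\cP$ combined with the $\cP^1$-equivariance of the cyclic action. The main obstacle is the middle step: verifying that the planar bookkeeping of marks, cyclic rotations and $\mathbbm 1$-edges is preserved under $\CPT$-composition. Once this combinatorial check is in place, the rest reduces to invoking the operad axioms of $\cP$ and of its cyclic structure.
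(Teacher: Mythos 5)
Your proposal follows essentially the same route as the paper: the left $\CPT$-action is defined on the tree generators by braces-type multi-insertions into the colour-2 slots, with the marks implemented by cyclic rotations $\sigma^{-i(I)}$ and the $\mathbbm 1$-edges by insertion of $\mathbbm 1_{\cP}$ (equivalently the forgetful map $\F$), while the right $\cP^1$-action is plain colour-1 composition. The paper's proof in fact stops after writing the explicit formulas for the actions of $T^i_n$ and $T^{i,i+1}_n$, leaving implicit the compatibility checks you outline (descent to the quotient, interchange of the two actions via $\cP^1$-equivariance of the cyclic action), so your sketch is, if anything, slightly more complete.
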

\begin{proof}
To describe the left action of $\CPT$ we use the following multi-insertion notation:

For $p_1,p_2,\dots, p_n \in \cP^2$, $p_1$ in arity $N$, we say that $I$ is a planar insertion of $p_2,\dots,p_n$ in $p_1$ if $I$ is an $N$-uple containing each $p_2,\dots,p_n$ exactly once, in that order and the other entries are filled with $\id_{\cP^2}$. For $i=1,\dots,n$, we define $i(I)$ as the position of $p_i$ in $I$. By $p_1(I)$, we mean the operadic composition given by $I$ (ignoring insertions in color $1$).

The action of $T^1_n \in \CPT$ is given by braces operations, i.e., $T^1_n(p_1, p_2,\dots,p_n) = p_1 \{p_2,\dots,p_n\}$.
The action of  $T^i_n \in \CPT$, for $i=1,\dots,n$ is given by a composition of the braces operation and a permutation of $C_{N+1}$ ``turning the mark in the direction of the root". Explicitly, if $\sigma$ is the generator of the cyclic group, $T^i_n(p_1,p_2,\dots, p_n) = \displaystyle\sum_I p_1^{\sigma^{-i(I)}}(I)$, where the sum runs over all possible planar insertions $I$ of $p_2,\dots, p_n$ in $p_1$. 

The action of $T^{i,i+1}_n$ is given by $T^{i,i+1}_n(p_1,p_2,\dots,p_n) = \displaystyle\sum_I \sum_{k=i(I)}^{(i+1)(I)} \F( p_1^{\sigma^{-k}}) (I)$, where the first sum runs over all possible planar insertions $I$ of $p_2,\dots, p_n$ in $p_1$.
This corresponds to the insertion of the element $\mathbbm 1_{\cP}$ in the marked space and the permutation sending the mark back to the direction of the root.

\end{proof}

%TODO There are no signs?
%TODO: PICTURE showing that this is indeed an algebra over $\CPT$?

\begin{lemma}\label{CSC => CPT} A morphism of Cyclic Swiss Cheese type operads induces a morphism of bimodules. \end{lemma}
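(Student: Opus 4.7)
The plan is to observe that both bimodule structures (on $\prod_n \cP^2(\cdot,n)[-n]$ and on $\prod_n \cQ^2(\cdot,n)[-n]$) are assembled, via the explicit formulas in the preceding proposition, from only four ingredients: the operadic composition in color $2$, the mixed composition that plugs a color-$1$ operation into a color-$2$ slot, the right cyclic action of $C_{n+1}$ on $\cP^2(\cdot,n)$, and the distinguished element $\mathbbm 1_{\cP}$ (through the forgetful map $\F$). By the very definition of a morphism of Cyclic Swiss Cheese type operads, $\phi = (\phi^1,\phi^2)$ is compatible with each of these four ingredients, so the proof should reduce to writing this out carefully.

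First I would fix notation: let $\Phi\colon \prod_n \cP^2(\cdot,n)[-n]\to \prod_n \cQ^2(\cdot,n)[-n]$ be the map assembled componentwise from $\phi^2$, and interpret ``morphism of bimodules'' by restricting the right $\cQ^1$-action on the target along $\phi^1\colon \cP^1\to\cQ^1$. Then I would verify the two sides of the structure independently. Compatibility with the right $\cP^1$-action, namely $\Phi(p\circ_j q) = \phi^2(p)\circ_j \phi^1(q)$, is immediate from $\phi$ being a morphism of $2$-colored operads.

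Compatibility with the left $\CPT$-action I would check generator by generator, on the trees $T^1_n$, $T^i_n$ and $T^{i,i+1}_n$ listed in Figure \ref{generators of CPT}. The formulas given in the preceding proposition are sums indexed by planar insertions $I$, a purely combinatorial datum preserved by $\Phi$; each summand is built from color-$2$ composition and, in addition, the cyclic shift $\sigma^{-i(I)}$ for $T^i_n$ and the forgetful map $\F$ for $T^{i,i+1}_n$. The operadic morphism property handles the compositions, cyclic equivariance of $\phi^2$ handles $\sigma$, and the two together with $\phi^2(\mathbbm 1_{\cP}) = \mathbbm 1_{\cQ}$ give $\Phi\circ\F = \F\circ\Phi$. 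Since the action of all of $\CPT$ is generated by these trees under operadic composition, and $\Phi$ respects composition by the argument just given on the generators, $\Phi$ intertwines the full left action.

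The hard part, such as it is, is purely bookkeeping: in the formula for $T^{i,i+1}_n$ one has a nested sum indexed by planar insertions $I$ and by $k$ running between the positions $i(I)$ and $(i+1)(I)$, with cyclic shifts that are then truncated by $\F$. I would need to make sure that every index involved depends only on the combinatorics of $I$ (not on the operad $\cP$ or $\cQ$) and that the $\sigma^{-k}$ and $\F$ commute with $\phi^2$ termwise, so that the sum on the $\cP$-side is matched term for term with the sum on the $\cQ$-side. No genuine mathematical obstacle is expected.
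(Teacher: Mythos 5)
Your argument is correct and is essentially the paper's own proof: the right-module compatibility comes from $\phi$ being a morphism of colored operads, and the left $\CPT$-action is built only from color-$2$ composition, the cyclic action, and $\F$ (hence $\mathbbm 1_{\cP}$), each of which $\phi$ preserves by definition. The generator-by-generator bookkeeping you describe is just a more explicit rendering of the same observation.
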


\begin{proof}
Since a  morphism of Cyclic Swiss Cheese type operads is in particular a morphism of colored operads, the induced map on the total space is a morphism of right bimodules.
Since the definition of the action of $\CPT$ uses only the cyclic action and $\F$ and by hypothesis a morphism of Cyclic Swiss Cheese type operads commutes with these maps, the induced map on the total spaces is a left module morphism.
\end{proof}

\subsection{The operad $\CBr$}\label{Def of CBr}
We now finish the construction of the Cyclic Braces operad via operadic twisting. 
There is a map $F\colon \Lie\{1\} \to \CPT$ sending the Lie bracket to

\[
 \begin{tikzpicture}[scale=1,
vert/.style={draw,outer sep=0,inner sep=0,minimum size=5,shape=circle,fill},
helper/.style={outer sep=0,inner sep=0,minimum size=5,shape=coordinate},
default_edge/.style={draw,-},
ext/.style={draw,outer sep=0,inner sep=2,minimum size=5,shape=circle},
every loop/.style={}]

\node (star) at (5,8) {*};
\node (star2) at (6.6,8) {*};
\node (v0) at (5,6.5) [ext] {2};
\node (v1) at (5,7.5) [ext] {1};
\node (v3) at (5,8.4) [helper] {};
\node (v10) at (6.6,7.5) [ext] {2};
\node (v9) at (6.6,6.5) [ext] {1};
\node (v12) at (6.6,8.4) [helper] {};
\node (v14) at (5.6,7.1) [helper,label=0:{$+$}] {};

\draw (v1) -- +(0,.6);
\draw (v10) -- +(0,.6);

\draw[default_edge] (v0) to (v1);
%\draw[default_edge] (v1) to (v3);
\draw[default_edge] (v9) to (v10);
%\draw[default_edge] (v10) to (v12);

\draw[ultra thick] (v1) --  ($(v1)!0.4cm!(star)$);
\draw[ultra thick] (v10) --  ($(v10)!0.4cm!(star2)$);
\draw[ultra thick] (v0) --  ($(v0)!0.4cm!(star)$);
\draw[ultra thick] (v9) --  ($(v9)!0.4cm!(star2)$);
\end{tikzpicture}
\]

Using $F$ we consider the (dg) operad given by the operadic twisting of $\CPT$, $ Tw \CPT$ (see the Appendix for details).

The space $Tw \CPT(n) =\left(\displaystyle\prod_k \CPT(n+k) \otimes \mathbb K [-2]^{\otimes k}\right)_{\mathbb S_k}$ is made out of trees, similar to the ones in $\CPT$ but with vertices of two different kinds. There are $n$ \textit{external} vertices, labeled from $1$ to $n$ and $k$ \textit{internal} unlabeled vertices, that we draw as a full black vertex. The degree of each edge or marked space is $-1$, the degree of an external vertex is $0$ and the degree of an internal vertex is $2$.

This operad is generated by elements as in Figure \ref{generators of CPT} together with $T'^i_n$and $T'^{i,i+1}_n$, $i=0,\dots,n$: 
\begin{figure}[h]
 \begin{tikzpicture}[scale=1]

\node (ro) at (0.25,1.6) {$*$};
\node (e0) at (0.25,1) [int] {1};
\node (e1) at (-1,0) [ext] {$1$};
\node (e2) at (-0.25,0) [ext] {$2$};
\node  at (0.25,0) {$\dots$};
\node (ei) at (0.75,0) [ext] {\small $i$};
\node (ed) at (1.25,0) {$\dots$};
\node (en) at (1.75,0) [ext] {\small $n$};
\draw (e0) -- +(0,.6);
\draw[-] (e1) edge (e0) (e2) edge (e0) (ei) edge (e0) (en) edge (e0);
\draw[ultra thick] (e0) --  ($(e0)!0.4cm!(ei)$);

\draw[ultra thick] (e1) --  ($(e1)!0.4cm!(e0)$);
\draw[ultra thick] (e2) --  ($(e2)!0.4cm!(e0)$);
\draw[ultra thick] (en) --  ($(en)!0.4cm!(e0)$);
\draw[ultra thick] (ei) --  ($(ei)!0.4cm!(e0)$);

\end{tikzpicture} \quad
 \begin{tikzpicture}[scale=1]

\node (ro) at (0.25,1.6) {$*$};
\node (e0) at (0.25,1) [int] {1};
\node (e1) at (-1,0) [ext] {$1$};
\node  at (-0.5,0) {$\dots$};
\node (ei) at (0,0) [ext] {\small $i$};
\node (eii) at (0.75,0) [ext] {\tiny{$i+1$}};
\node (ed) at (1.3,0) {$\dots$};
\node (en) at (1.75,0) [ext] {\small $n$};
\draw (e0) -- +(0,.6);
\draw[-] (e1) edge (e0) (ei) edge (e0) (eii) edge (e0) (en) edge (e0);
\draw[ultra thick] (e0) -- +(-80:0.5cm);

\draw[ultra thick] (ei) --  ($(ei)!0.4cm!(e0)$);
\draw[ultra thick] (e1) --  ($(e1)!0.4cm!(e0)$);
\draw[ultra thick] (eii) --  ($(eii)!0.5cm!(e0)$);
\draw[ultra thick] (en) --  ($(en)!0.4cm!(e0)$);
\end{tikzpicture}
\end{figure}

The differential has two pieces, the first is computed by taking the operadic Lie bracket with 
 \begin{tikzpicture}[scale=1]

\node (ro) at (1,1.7) {$*$};
\node (e0) at (1,1) [int] {$1$};
\node (e1) at (1,0) [ext] {$1$};

\draw (e1) -- (e0) (e0)--+(0,.7);
\draw[ultra thick] (e1) --  ($(e1)!0.4cm!(e0)$);
\draw[ultra thick] (e0) --  ($(e0)!0.4cm!(ro)$);

\node at (1.5,0.65) {$+$};

\node (ro2) at (2,1.7) {$*$};
\node (e02) at (2,1) [ext] {$1$};
\node (e12) at (2,0) [int] {$1$};

\draw (e12) -- (e02) (e02)--+(0,.7);
\draw[ultra thick] (e12) --  ($(e12)!0.4cm!(e02)$);
\draw[ultra thick] (e02) --  ($(e02)!0.4cm!(ro2)$);

\end{tikzpicture} $= T'^1_1 + T^1_2 \circ_1 T'^1_0$ , which amounts to split an internal vertex at every external vertex, but subtracting some combinations with one 1-valent or 2-valent internal vertex. The second piece just splits an internal vertex out of every internal vertex.

\begin{lemma}
The subspace $(Tw \CPT)'\subset Tw \CPT$ spanned by trees whose internal vertices are at least 3-valent is a suboperad of $Tw \CPT$.
\end{lemma}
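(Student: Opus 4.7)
The plan is to verify the two properties required for $(Tw\CPT)'$ to be a sub-dg-operad of $Tw\CPT$: closure under the operadic compositions $\circ_j$, and closure under the twisting differential.

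Closure under composition is the easier half and follows from a valence-monotonicity observation. Given $T_1, T_2 \in (Tw\CPT)'$ and an external vertex $j$ of $T_1$, the composite $T_1 \circ_j T_2$ removes the external vertex $j$ and inserts $T_2$ in its place, with the edges of $T_1$ formerly incident to $j$ reconnected to vertices of $T_2$ in all planar ways. Each internal vertex of $T_1$ or $T_2$ persists in the composite and never loses any of its incident edges; the reconnection only ever augments valences (by transferring edges formerly at $j$ onto internal vertices of $T_2$). Consequently every internal vertex of $T_1 \circ_j T_2$ has valence at least as large as it did in $T_1$ or $T_2$, hence is still $\geq 3$-valent.

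Closure under the differential is the more delicate step, and is the point where the correction $T^{\prime 1}_1 + T^1_2 \circ_1 T^{\prime 1}_0$ in the Maurer--Cartan element earns its keep. I would analyze the two pieces of $d$ separately and show that the output of each has no internal vertex of valence $< 3$. For the first piece, the operadic bracket with $T^{\prime 1}_1 + T^1_2 \circ_1 T^{\prime 1}_0$, attaching a new internal vertex $v_{\mathrm{new}}$ at an external vertex $u$ and transferring $k$ of $u$'s former edges to $v_{\mathrm{new}}$ yields $v_{\mathrm{new}}$ of valence $k+1$; the correction terms subtract precisely the $k=0$ and $k=1$ summands, leaving only contributions with $v_{\mathrm{new}}$ of valence $\geq 3$. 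For the second piece, splitting each existing internal vertex $w$ (of valence $v \geq 3$) into two internal vertices joined by a new edge, only the extremal summands --- those in which one side of the split receives $\leq 1$ of $w$'s former edges --- can produce a new internal vertex of valence $< 3$, and I expect these to vanish by the antisymmetry of edge labels under the sign representation used in the definition of $\CPT$, combined with pairwise cancellations of the degenerate summands against analogous summands with the two new internal vertices swapped.

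The main technical obstacle will be this last bookkeeping step: enumerating the degenerate splittings at internal vertices from the second piece and checking the claimed cancellations case by case, so that no internal vertex of valence $\leq 2$ survives in $dT$. Once this is accomplished, together with the trivial observation that the operadic unit (a single external vertex with no internal vertices) lies in $(Tw\CPT)'$, the subspace is closed under both composition and the differential, and is therefore a sub-dg-operad of $Tw\CPT$.
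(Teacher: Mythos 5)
Your treatment of the operadic compositions is fine and agrees with the paper's one-line observation: composition never creates internal vertices and can only increase their valences. The gap is in your treatment of the differential, and it is structural rather than a matter of bookkeeping. You propose to ``analyze the two pieces of $d$ separately and show that the output of each has no internal vertex of valence $<3$,'' but neither piece separately preserves $(Tw\,\CPT)'$; only their sum does. The second piece (splitting an internal vertex $w$ into two internal vertices joined by a new edge) genuinely produces summands with a $1$-valent internal vertex, and these do not vanish for the reasons you suggest: the sign representation on edge labels kills a tree only if it admits an automorphism inducing an odd permutation of its edges, which a dangling $1$-valent internal vertex does not provide, and in a planar rooted tree the two ``extremal'' redistributions of $w$'s edges are distinct trees, so there is no cancellation by swapping the two halves of the split. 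In the paper these $1$-valent contributions --- both those hanging off internal vertices (second piece) and those hanging off external vertices (the $\sum_j T\circ_j(\,\cdot\,)$ half of the first piece) --- are cancelled by the \emph{other} half of the first piece, namely $(T'^1_1+T^1_2\circ_1 T'^1_0)\circ_1 T$, which attaches a single new $1$-valent internal vertex to each vertex of $T$ in all planar ways. This is a cancellation \emph{across} your two pieces.

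The $2$-valent case has the same problem. Your claim that within the first piece ``the correction terms subtract precisely the $k=0$ and $k=1$ summands'' is not justified: the $k=1$ summand is a $2$-valent internal vertex subdividing the edge between the external vertex $u$ and some neighbour $w$, and it is cancelled by the matching degenerate summand of the splitting performed \emph{at} $w$ --- which belongs to the second piece whenever $w$ is internal. The paper's argument is exactly this: a $2$-valent internal vertex subdividing an edge $ab$ arises precisely twice, once from splitting $a$ and once from splitting $b$, with opposite signs. So the proof requires pairing degenerate terms across the two summands of the differential (and, for the $1$-valent ones, against $(T'^1_1+T^1_2\circ_1 T'^1_0)\circ_1 T$); as stated, your decomposition forecloses both pairings and the argument does not close.
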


\begin{proof}
The composition of trees in $(Tw \CPT)'$ cannot create internal vertices with valence $1$ or $2$. 

The differential, however can create both kinds of vertices, so we must check that these contributions are canceled. 

$1$-valent internal vertices can be created at every internal vertex by splitting it and reconnecting all edges incident edges to one of the internal vertices. Similarly, $1$-valent internal vertices can be created at an external vertex when inserting  $T'^1_1 + T^1_2 \circ_1 T'^1_0$ at that vertex and then reconnect to the external vertex. These contributions are all canceled by the remaining term of the differential consisting in inserting the tree in  $T'^1_1 + T^1_2 \circ_1 T'^1_0$.

To see that $2$-valent internal vertices contributions are canceled, it is enough to notice that every time such a vertex is created, it will be canceled by a similar contribution on the other adjacent vertex.
\end{proof}

\begin{defi}
We define the Cyclic Braces operad as $\CBr:= (Tw \CPT)'/J$, where $J$ is the operadic ideal generated by $T'^i_n - T'^{i-1}_n, \  i=0,\dots,n$ and

\begin{equation}\label{Ideal}
\begin{tikzpicture}[scale=1]
\node (v0) at (0,0) [int] {.};
\draw (v0) -- (0,0.6);
\draw (v0) -- (0,-0.6);
\draw[ultra thick] (v0) --  +(-45:0.4cm);
\end{tikzpicture}
\begin{tikzpicture}[scale=1]
\node at (0,0) {};
\node at (0,0.5) {$-$};
\end{tikzpicture}
\begin{tikzpicture}[scale=1]

\draw (0,0.6) -- (0,-0.6);
\end{tikzpicture}. 
\end{equation}
\end{defi}

\begin{remark}\label{homogeneous}
The $T'^i_n - T'^{i-1}_n$ in $J$ mean that in $\CBr$ the marks at internal vertices are irrelevant. We will therefore not draw them in pictures and we will denote the image of $T'^i_n$ in $\CBr$ just by $T'_n$.\end{remark}

\begin{convention}
Since $J$ is not homogeneous with respect to the number of (internal) vertices, the number of (internal and therefore the total number of) vertices of a cyclic braces tree is \textit{a priory} ill defined. We shall consider that whenever we have subsection of a tree like this \begin{tikzpicture}[scale=1]
\node (v0) at (0,0) [int] {.};
\draw (v0) -- (0,0.6);
\draw (v0) -- (0,-0.6);
\draw[ultra thick] (v0) --  +(-45:0.4cm);
\end{tikzpicture} that there is only one edge and no vertices.
\end{convention}

\subsection{The homology of the Cyclic Braces operad}
In this subsection we show that the homology of $\CBr$ is the $\BV$ operad. For this, we make use of the operad $\Br$ whose homology, as mentioned in the beginning of this section, is the operad $\Ger$.

\begin{defi}
The operad $\Br$ is defined as the suboperad of $\CBr$ generated by $T_n^1$ and $T_n'$, or equivalently, the suboperad spanned by trees whose marks at every vertex are pointing towards the root. 
\end{defi} 

In $\Br$ we ``forget" that there are marks at vertices, therefore when referring to this operad we use the notation $T_n$ instead of $T_n^1$ and we do not draw the marks in the pictures.

Two trees in $\CBr$ are said to have the same shape if when one forgets about the marks at vertices and connections to $\mathbbm 1$, they are the same. For example, $T^i_n$ and $T^{i,i+1}_n$ have the same shape.

Let let us consider the map $f= \oplus_{n}f_n\colon \Br(n)\otimes (\mathbb K \oplus \mathbb K[1])^{\otimes n}\to \CBr(n) $
 sending $T\otimes \epsilon$, where $T$ is braces tree and $\epsilon = \epsilon_1\otimes\dots\otimes\epsilon_n\in (\mathbb K \oplus \mathbb K[1])^{\otimes n}$, to the a sum of cyclic braces trees of the same shape, according to the following rules:

If the $\epsilon_i = (1,0)$, the vertex labeled by $i$ is sent to the same vertex with the marking pointing in the direction of the root. 

If the $\epsilon_i = (0,1)$, the vertex labeled by $i$ is sent to a sum over all possible ways of inserting an edge connecting to $\mathbbm 1$.

\begin{lemma}\label{H(CBR)=BV}
$f$ is a quasi-isomorphism of chain complexes.
\end{lemma}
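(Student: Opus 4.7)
The plan is to verify that $f$ is a chain map and then establish the quasi-isomorphism via a spectral sequence on $\CBr(n)$.

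First I would check that $f$ is a chain map. The domain carries $d_{\Br}\otimes\id$ since $(\mathbb{K}\oplus\mathbb{K}[1])^{\otimes n}$ has zero differential. The map $f$ preserves the underlying shape of the braces tree, decorating each external vertex $i$ either with a mark toward the root (when $\epsilon_i\in\mathbb{K}$) or with an $\mathbbm{1}$-edge summed over all planar insertion positions (when $\epsilon_i\in\mathbb{K}[1]$). Since $d_{\CBr}$ acts by splitting vertices, and $\Br\subset\CBr$ inherits the same operation, splitting commutes with $f$'s decoration: the sum over planar positions of an $\mathbbm{1}$-edge at a state-B vertex is stable under the splittings at that vertex, and the differential introduces neither new $\mathbbm{1}$-edges nor non-canonical mark positions. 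Careful sign bookkeeping via edge-orderings then yields $f\circ(d_{\Br}\otimes\id)=d_{\CBr}\circ f$.

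Next I would construct an increasing filtration $F^\bullet\CBr(n)$ engineered so that (a) it is stable under $d_{\CBr}$; (b) $f:\Br(n)\otimes(\mathbb{K}\oplus\mathbb{K}[1])^{\otimes n}\xrightarrow{\cong} F^0\CBr(n)$ is an isomorphism of chain complexes; and (c) $F^p/F^{p-1}$ is acyclic for every $p\geq 1$. A natural choice is to filter by a complexity counting the external vertices whose marks are not in the \emph{canonical} position — toward the root when no $\mathbbm{1}$-edge is attached, or toward the $\mathbbm{1}$-edge otherwise — together with any bookkeeping needed for $\mathbbm{1}$-edges attached to internal vertices (which are controlled by the relation $T'^i_n=T'^{i-1}_n$ in the ideal $J$). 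The identification in (b) is obtained by checking that a tree in $F^0\CBr(n)$ is determined by its underlying braces tree together with the subset of external vertices bearing an $\mathbbm{1}$-edge, the $\CPT''$-quotient ensuring that the mark at such a vertex points to $\mathbbm{1}$.

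To prove the acyclicity of $F^p/F^{p-1}$ for $p\geq 1$ I would construct an explicit contracting homotopy. Given a tree $T$ with a non-canonical external vertex $i$ (say, of smallest label), let $h(T)$ be the tree obtained by inserting a 2-valent internal vertex on the edge toward which the mark at $i$ points, using relation~\eqref{Ideal} as an identity; this effectively shifts the mark at $i$ by one step under the cyclic rotation. A telescoping sum over the mark rotations at $i$ would then yield $[d_{\CBr},h]=\id$ modulo $F^{p-1}$.

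The main obstacle is the combinatorial design of this homotopy: the mark-shift operation must interact correctly with the vertex-splittings in $d_{\CBr}$ — including splittings that redistribute $\mathbbm{1}$-edges between external and newly created internal vertices — and both relations defining the ideal $J$ must be deployed simultaneously in order to absorb the error terms. Once this is in place, the filtration spectral sequence collapses at $E^1=F^0\CBr(n)$, which is the image of $f$, and the result follows. The approach generalizes, at a higher level of combinatorial complexity, the methods used to compute $H(\Br)=\Ger$ in \cite{KS,Homology of Braces}.
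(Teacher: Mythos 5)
Your overall strategy (chain map check, then a filtration argument) points in the right direction, but the filtration you propose does not work as stated, and the two key claims you hang it on are false. First, the subspace $F^0$ of trees whose marks are all in ``canonical'' position is \emph{not} the image of $f$: at a vertex of valence $k$ carrying an edge to $\mathbbm 1$ there are $k$ distinct canonical configurations (one per planar gap), each individually lying in your $F^0$, whereas $f$ applied to $\epsilon_i=(0,1)$ only produces their \emph{sum}. So $F^0$ is strictly larger than $\im f$ and the spectral sequence cannot ``collapse at $E^1=F^0=\im f$''. Second, $F^0$ is not even a subcomplex, and your increasing filtration is not $d$-stable in the direction you need: the mark-moving part of the differential satisfies $d(T^{i,i+1}_n)\ni T^{i+1}_n-T^{i}_n$, which sends a canonical tree to non-canonical ones, i.e.\ it \emph{increases} your complexity count. (This also contradicts the assertion in your chain-map step that the differential ``introduces no non-canonical mark positions''; it does, and the actual content of the chain-map verification in the paper is that these terms cancel across the sum $\sum_i T^{i,i+1}_n$.)

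The paper's proof uses a different and sharper filtration: by the number of \emph{internal} vertices (well defined thanks to the convention on the ideal $J$). On the associated graded, only the mark-moving differential $d_0$ survives; since $d_0$ preserves the shape of the tree, the complex splits as a direct sum over shapes and then as a tensor product over vertices, and each vertex factor is the simplicial chain complex of a $k$-gon, i.e.\ of a circle, with homology $\mathbb K\oplus\mathbb K[1]$ whose two classes are represented precisely by ``mark toward the root'' and ``sum over all $\mathbbm 1$-insertions'' --- exactly the decorations $f$ uses. Your contracting-homotopy idea is morally a way of recomputing this circle homology, but to repair your argument you would at minimum have to (i) first filter by internal vertices so that vertex-splitting is separated from mark-moving, and (ii) replace ``$F^0=\im f$'' by an identification of $\im f$ with the \emph{homology} of the vertex-local circle complexes, not with a space of canonical representatives.
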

\begin{proof}
Since marked spaces have degree $-1$, $f$ preserves the degree. Since the differential acts by derivations, it is enough to check that $f$ commutes with the differentials on every vertex $i$ and this is clearly the case if $\epsilon_i = (1,0)$. 

Let us consider the case of $T_n =$ \begin{tikzpicture}[scale=1]

\node (ro) at (1,1.7) {$*$};
\node (e0) at (1,1) [ext] {$1$};
\node (e1) at (0,0) [ext] {$2$};
\node (e2) at (.75,0) [ext] {$3$};
\node (ed) at (1.3,0) {$\dots$};
\node (e3) at (2,0) [ext] {\small $n$};
\draw (e0) -- +(0,.7);
\draw[-] (e1) edge (e0) (e2) edge (e0) (e3) edge (e0);

\end{tikzpicture} $\in \Br$ with $\epsilon_1= (0,1)$.

\begin{equation}\label{eq:dTn}
dT_n=\displaystyle\sum_{\substack{ \text{ways of} \\ \text{connecting}}} 
\begin{tikzpicture}[scale=1]
\node (ro) at (1,1.7) {$*$};

\node (e1) at (1,1) [ext] {$1$};
\node (black) at (1,0.3) [int] {$*$};

\node (e2) at (0,-1) [ext] {$2$};
\node (e3) at (.75,-1) [ext] {$3$};
\node (ed) at (1.3,-1) {$\dots$};
\node (en) at (2,-1) [ext] {\small $n$};
\draw (e1) -- +(0,.7);

\draw [dashed] (1,0.45) circle (0.75);
\draw (black) -- (e1);

\draw (e2) -- (0.5,-0.1);
\draw (e3) -- (0.85, -0.3);
\draw (en) -- (1.5 ,-0.1);

\end{tikzpicture}
+
\begin{tikzpicture}[scale=1]
\node (ro) at (1,1.7) {$*$};

\node (e1) at (1,0.3) [ext] {$1$};
\node (black) at (1,1) [int] {$*$};

\node (e2) at (0,-1) [ext] {$2$};
\node (e3) at (.75,-1) [ext] {$3$};
\node (ed) at (1.3,-1) {$\dots$};
\node (en) at (2,-1) [ext] {\small $n$};
\draw (black) -- +(0,.7);

\draw [dashed] (1,0.45) circle (0.75);
\draw (black) -- (e1);

\draw (e2) -- (0.5,-0.1);
\draw (e3) -- (0.85, -0.3);
\draw (en) -- (1.5 ,-0.1);
\end{tikzpicture},
\end{equation}

 where the sum runs over all planar possible ways of connecting the incident edges such that the internal vertex is at least trivalent.

We have $f(T_n) = \sum_{i=1}^n T_n^{i,i+1}$, following the notation in Figure \ref{generators of CPT}. If we compute $df(T_n)$, the part of the differential given by the insertion of \begin{tikzpicture}[scale=1]
\node (ro) at (1,1.6) {$*$};

\node (e1) at (1,0.3) [ext] {$1$};
\node (black) at (1,1) [int] {$*$};

\draw (black) -- (e1);
\draw (black) -- +(0,0.6);
\end{tikzpicture} on every $T_n^{i,i+1}$ is canceled over all the sum. 

Therefore $df(T_n) = \displaystyle \sum_i \sum_{\substack{ \text{ways of} \\ \text{connecting}}}$
 \begin{tikzpicture}[scale=1]

\node (ro) at (0.25,1.6) {$*$};
\node (e1) at (0.25,-0.08) [ext] {$1$};
\draw (e1) -- +(0,0.75) node (black) [int] {$a$};

\node (e2) at (-1,-1) [ext] {$2$};
\node  at (-0.5,-1) {$\dots$};
\node (ei) at (0,-1) [ext] {\small $i$};
\node (eii) at (0.75,-1) [ext] {\tiny{$i+1$}};
\node (ed) at (1.3,-1) {$\dots$};
\node (en) at (1.75,-1) [ext] {\small $n$};
\draw (0.25,1.2) -- (0.25,1.6);

\draw [dashed] (0.25,0.45) circle (0.75);

\draw (e2) -- (-0.25,-0.1);
\draw (ei) -- (0, -0.25);
\draw (eii) -- (0.6, -0.2);
\draw (en) -- (0.75 ,-0.1);

\draw[ultra thick] (e1) -- +(-90:0.5cm);

\draw[ultra thick] (ei) --  ($(ei)!0.4cm!(0, -0.25)$);
\draw[ultra thick] (e2) --  ($(e2)!0.4cm!(-0.25,-0.1)$);
\draw[ultra thick] (eii) --  ($(eii)!0.5cm!(0.6, -0.2)$);
\draw[ultra thick] (en) --  ($(en)!0.4cm!(0.75 ,-0.1)$);

\end{tikzpicture}. To see that $df(T_n) = f(dT_n)$, we note that there are two possibilities. Every summand of $df(T_n)$ has either the internal vertex connected to the root vertex or the vertex labeled by $1$ connected to the root vertex. If the root is connected to the internal vertex, we find that same summand on the image by $f$ of the second type of trees on equation \eqref{eq:dTn}, and similarly if the root is connected to the vertex $1$.

Conversely, all trees that we get when we compute $f(dT_n)$ appear only once (due to the planar ordering of edges and marks around a vertex) and can be obtained as a summand in $df(T_n)$.

To show that $f$ is a quasi-isomorphism, we filter $\CBr$ and $\Br$ by the number of internal vertices (see Remark \ref{homogeneous}). The map $f$ is compatible with these filtrations and on the zeroth page of the corresponding spectral sequence in $\CBr$ one obtains the only piece of the differential that does not increase the number of internal vertices. Explicitly $d_0 (T^{i,i+1}_n) = T^{i+1}_n-T^{i}_n$ and $d_0(T^{j}_n)=0$. On the correspondent spectral sequence in $\Br$ one obtains the zero differential.

The differential $d_0$ respects the shape of the tree. Therefore the complex $(\CBr,d_0)$ splits as $$\CBr (n) = \bigoplus_{\text{Shape } S} V_S,$$
where the sum runs over all possible shapes $S$ of trees with $n$ external vertices and $V_S$ is the subcomplex spanned by all trees with the shape $S$.

The differential acts on the tree by acting on every vertex by means of the Leibniz rule, therefore if $V_S^i$ represents the space of the $i$-th vertices of the trees with the given shape, then each $V_S$ splits as a complex as $V_S = \bigotimes_{i=1}^n V_S^i$ (up to some degree shift).

But $(V^i_S,d_0)$ is isomorphic to the simplicial complex of the $k$-gon, where $k$ is the valence of the vertex $i$ (again, up to some degree shift).

Therefore $H(\CBr, d_0) =\displaystyle\bigoplus_{\text{Shape } S} \left( \bigotimes_{i=1}^n H(V_S^i)\right) [k_S]= \bigoplus_{\text{Shape } S} \left(\bigotimes_{i=1}^n (\mathbb K \oplus \mathbb K[1])\right) [k_S]$, where $k_S$ is a degree shift dependent only on the shape of the tree. 

Then, at the level of the homology of the zeroth pages of the spectral sequences we get and induced map $\Br(n)\otimes (\mathbb K \oplus \mathbb K[1])^{\otimes n}\to \displaystyle\bigoplus_{\text{Shape } S} (\mathbb K \oplus \mathbb K[1])^{\otimes n} [k_S]$.

Since clearly every possible shape of Cyclic Braces trees has a unique representative that is a Braces tree, this induced map is an isomorphism. Therefore $f$ induces a quasi-isomorphism on the zeroth page of the spectral sequence, which implies that $f$ is a quasi-isomorphism between the original complexes.

\end{proof}

\begin{corollary}
The homology of $\CBr$ is $\BV$, the operad governing BV algebras.
\end{corollary}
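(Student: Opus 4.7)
The strategy is to combine Lemma \ref{H(CBR)=BV} with the Kontsevich--Soibelman identification $H(\Br) = \Ger$ mentioned at the start of Section~\ref{Def of CBr}, and to promote the resulting $\mathbb S$-module isomorphism to an isomorphism of operads. Passing to homology in the quasi-isomorphism of chain complexes from the preceding lemma yields
$$H(\CBr)(n) \;\cong\; \Ger(n) \otimes (\mathbb K \oplus \mathbb K[1])^{\otimes n}$$
as $\mathbb S$-modules. The right-hand side coincides with the underlying $\mathbb S$-module of $\BV$: indeed, every BV operation on $(x_1,\dots,x_n)$ decomposes uniquely as a Gerstenhaber operation applied to inputs of the form $x_i$ or $\Delta x_i$, reflecting the free $\Ger$-operad extension by a single degree $-1$ unary generator $\Delta$ with $\Delta^2 = 0$.

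Next, I would exhibit cycles in $\CBr$ representing the three BV generators: the commutative product $\mu$ as the class of $T_2^1$; the Gerstenhaber bracket as the class of the cycle coming from the map $F\colon \Lie\{1\} \to \CPT$ used in the construction of $Tw\,\CPT$; and the BV operator $\Delta$ as the class of the one-vertex tree $T_1^{1,1}$ whose unique input carries a mark not pointing towards the root, so that under the $\CPT$-action the forgetful map $\F$ inserts an $\mathbbm 1$. Sending the BV generators to these classes defines a candidate operad morphism $\BV \to H(\CBr)$, and by the $\mathbb S$-module calculation above it will be an isomorphism as soon as it is well defined.

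It remains to check that the three classes satisfy the defining relations of $\BV$ up to coboundary in $\CBr$. The relation $\Delta^2 = 0$ holds for degree reasons: there is no nonzero class of the appropriate arity and degree in the target $\mathbb S$-module. The Jacobi, Leibniz, and associativity relations are inherited directly from the corresponding relations in $\Br$ and $\Ger$, together with the cyclic equivariance used to define the action of $T^{i}_n$ and $T^{i,i+1}_n$ in Section~\ref{bimodule out of CSC}. The compatibility of $\Delta$ with the bracket (i.e.\ $\Delta$ is a derivation of $\{-,-\}$) likewise follows from shape considerations, since on trees with two external vertices the only available shapes are determined by $\Ger$.

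The main obstacle is the seven-term BV identity expressing the bracket as the failure of $\Delta$ being a derivation of the product. This must be witnessed by an explicit element of $\CBr(2)$ whose differential equals the difference of the two sides; I would construct it as a two-vertex tree with a single internal vertex inserted between the root and one of the external vertices, decorated so that its differential produces on one hand the two trees $T_2^{1,2}$ and $T_2^{2,3}$ representing $\Delta \circ \mu$, on the other hand the trees where the $\mathbbm 1$-edge is attached to a single external vertex representing $(\Delta \otimes \id) + (\id \otimes \Delta)$, and finally the bracket cycle. This computation is combinatorial and in the same spirit as the cancellations carried out in the proof of Lemma \ref{H(CBR)=BV}.
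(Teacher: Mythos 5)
Your overall strategy is the same as the paper's: pass to homology in Lemma \ref{H(CBR)=BV} to identify the underlying $\mathbb S$-module of $H(\CBr)$ with $\Ger(n)\otimes(\mathbb K\oplus\mathbb K[1])^{\otimes n}\cong \BV(n)$, exhibit classes representing the three $\BV$ generators, check the relations up to homotopy, and conclude by finite-dimensionality in each arity. However, your choice of representative for the commutative product is wrong. The tree $T_2^1$ is the braces operation $x_1\{x_2\}$: it has one non-root edge and no internal vertex, hence degree $-1$ rather than $0$, and it is not even a cycle --- its differential is (up to sign) $T'_2 - T'_2{}^{(12)}$, the commutator of the cup product, which is exactly why the product is commutative only up to homotopy in $\Br$. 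The correct representative, and the one the paper uses, is the corolla $T'_2$ with a single \emph{internal} root vertex attached to the two external vertices (degree $-2$ from the two edges plus $+2$ from the internal vertex). With that substitution the rest of your outline matches the paper; your representatives for the bracket (the symmetrized braces tree in the image of $F$) and for $\Delta$ (the one-external-vertex tree whose mark is rotated away from the root) agree with the paper's choices, and the degree argument for $\Delta^2=0$ is fine.

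A smaller point: the homotopy witnessing $\Delta\circ\cdot = [\ ,\ ] + \cdot\circ_1\Delta + \cdot\circ_2\Delta$ is not, in the paper, a tree with an internal vertex interposed between the root and an external vertex. Computing $\Delta\circ_1 T'_2$ directly by the composition rule already yields the symmetrized two-external-vertex braces trees with \emph{both} marks rotated toward each other, and the required homotopy is simply the sum of the two marked braces trees ($1$ over $2$ and $2$ over $1$) with the mark at the top vertex rotated off the root; each such tree has one edge and one rotated mark, hence the correct degree $-2$. The shape you propose (an internal vertex between the root and an external vertex, decorated with marks) does not have degree $-2$ unless further marks are rotated, so as stated it cannot serve as the homotopy. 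This is a repairable detail, but as written that step of your verification would fail.
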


\begin{proof}

As a consequence of Lemma \ref{H(CBR)=BV} we have $H(\CBr(n)) = H(\Br(n)\otimes (\mathbb K \oplus \mathbb K[1])^{\otimes n}) = H(\Br(n)) \otimes   (\mathbb K \oplus \mathbb K[1])^{\otimes n} = \Ger(n) \otimes (\mathbb K \oplus \mathbb K[1])^{\otimes n} \cong \BV(n)$.

Keeping track of signs, it is easy to check that  \begin{tikzpicture}[scale=0.8]

\node (ro) at (1,1.7) {$*$};
\node (e0) at (1,1) [int] {$*$};
\node (e1) at (0.5,0) [ext] {$1$};
\node (e2) at (1.5,0) [ext] {$2$};
\draw (e0) -- +(0,.7);
\draw[-] (e1) edge (e0) (e2) edge (e0);

%\draw[line width=1.8] (e0) -- +(90:0.5cm);

\draw[ultra thick] (e1) --  ($(e1)!0.5cm!(e0)$);
\draw[ultra thick] (e2) --  ($(e2)!0.5cm!(e0)$);
\end{tikzpicture}, \begin{tikzpicture}[scale=1]
\node (v0) at (5,6.7) [ext] {$2$};
\node (v1) at (5,7.5) [ext] {$1$};
\node (v3) at (5,8.1)  {$*$};
\node (v10) at (6.2,7.5) [ext] {2};
\node (v9) at (6.2,6.7) [ext] {1};
\node (v12) at (6.2,8.1)  {*};
\node (v14) at (5.6,7.1)  {+};

\draw (v1) -- +(0,.6);
\draw (v10) -- +(0,.6);

\draw (v0) -- (v1);
\draw (v9) -- (v10);

\draw[ultra thick] (v0) --  ($(v0)!0.4cm!(v1)$);
\draw[ultra thick] (v1) --  ($(v1)!0.4cm!(v3)$);

\draw[ultra thick] (v10) --  ($(v10)!0.4cm!(v12)$);
\draw[ultra thick] (v9) --  ($(v9)!0.4cm!(v10)$);
\end{tikzpicture} and \begin{tikzpicture}[scale=1]
\node (v0) at (0,0) [ext] {1};
\node (ro) at (0,0.6) {*};

\draw (v0) -- (0,0.6);
\draw[ultra thick] (v0) --  +(-45:0.5cm);
\end{tikzpicture} satisfy, up to homotopy, the relations of $\cdot$, $[\ ,\ ]$ and $\Delta$, the generators of $\BV$.

For example, the equality

\begin{tikzpicture}[scale=1]
\node  at (0.2,0) {\begin{tikzpicture}[scale=1]
\node (v0) at (0,0) [ext] {1};
\node (ro) at (0,0.6) {*};

\draw (v0) -- (0,0.6);
\draw[ultra thick] (v0) --  +(-45:0.5cm);
\end{tikzpicture}};

\node at (0.8,-0.2) {$\circ_1$};

\node at (1.5,0) {\begin{tikzpicture}[scale=0.8]

\node (ro) at (1,1.7) {*};
\node (e0) at (1,1) [int] {$*$};
\node (e1) at (0.5,0) [ext] {$1$};
\node (e2) at (1.5,0) [ext] {$2$};
\draw (e0) -- +(0,.7);
\draw[-] (e1) edge (e0) (e2) edge (e0);

%\draw[line width=1.8] (e0) -- +(90:0.5cm);

\draw[ultra thick] (e1) --  ($(e1)!0.5cm!(e0)$);
\draw[ultra thick] (e2) --  ($(e2)!0.5cm!(e0)$);
\end{tikzpicture}};

\node at (2.3,-0.3) {$=$};

\node at (3.4,0) {\begin{tikzpicture}[scale=1]
\node (v0) at (5,6.7) [ext] {$2$};
\node (v1) at (5,7.5) [ext] {$1$};
\node (v3) at (5,8.1)  {*};

\node (v10) at (6,7.5) [ext] {2};
\node (v9) at (6,6.7) [ext] {1};
\node (v12) at (6,8.1)  {*};
\node (v14) at (5.5,7.1)  {$+$};

\draw (v1) -- +(0,.6);
\draw (v10) -- +(0,.6);

\draw (v0) -- (v1);
\draw (v9) -- (v10);

\draw[ultra thick] (v0) --  ($(v0)!0.35cm!(v1)$);
\draw[ultra thick] (v1) --  ($(v1)!0.3cm!(v0)$);

\draw[ultra thick] (v10) --  ($(v10)!0.35cm!(v9)$);
\draw[ultra thick] (v9) --  ($(v9)!0.3cm!(v10)$);
\end{tikzpicture}};

\node at (4.35,-0.3) {$=$};

\node at (5.3,0) {\begin{tikzpicture}[scale=1]
\node (v0) at (5,6.7) [ext] {$2$};
\node (v1) at (5,7.5) [ext] {$1$};
\node (v3) at (5,8.1)  {*};
\node (v10) at (6,7.5) [ext] {2};
\node (v9) at (6,6.7) [ext] {1};
\node (v12) at (6,8.1)  {*};
\node (v14) at (5.5,7.1)  {$+$};

\draw (v1) -- +(0,.6);
\draw (v10) -- +(0,.6);

\draw (v0) -- (v1);
\draw (v9) -- (v10);

\draw[ultra thick] (v0) --  ($(v0)!0.4cm!(v1)$);
\draw[ultra thick] (v1) --  ($(v1)!0.4cm!(v3)$);

\draw[ultra thick] (v10) --  ($(v10)!0.4cm!(v12)$);
\draw[ultra thick] (v9) --  ($(v9)!0.4cm!(v10)$);
\end{tikzpicture}};

\node at (6.4,-0.3) {$-$};

\node at (7.2,0)  {\begin{tikzpicture}[scale=0.8]

\node (ro) at (1,1.7) {*};
\node (e0) at (1,1) [int] {$*$};
\node (e1) at (0.5,0) [ext] {$1$};
\node (e2) at (1.5,0) [ext] {$2$};
\draw (e0) -- +(0,.7);
\draw[-] (e1) edge (e0) (e2) edge (e0);

%\draw[line width=1.8] (e0) -- +(90:0.5cm);

\draw[ultra thick] (e1) --  ($(e1)!0.5cm!(e0)$);
\draw[ultra thick] (e2) --  ($(e2)!0.5cm!(e0)$);
\draw[ultra thick] (e1) --  +(-135:0.5cm);

\end{tikzpicture}};

\node at (8.0,-0.3) {$+$};

\node at (8.9,0)  {\begin{tikzpicture}[scale=0.8]

\node (ro) at (1,1.7) {*};
\node (e0) at (1,1) [int] {$*$};
\node (e1) at (0.5,0) [ext] {$1$};
\node (e2) at (1.5,0) [ext] {$2$};
\draw (e0) -- +(0,.7);
\draw[-] (e1) edge (e0) (e2) edge (e0);

%\draw[line width=1.8] (e0) -- +(90:0.5cm);

\draw[ultra thick] (e1) --  ($(e1)!0.5cm!(e0)$);
\draw[ultra thick] (e2) --  ($(e2)!0.5cm!(e0)$);
\draw[ultra thick] (e2) --  +(-45:0.5cm);

\end{tikzpicture}};

\node at (9.7,-0.3) {$+ \ d$};

\node at (10.3,0) {\begin{tikzpicture}[scale=1]
\node (v0) at (5,6.7) [ext] {$2$};
\node (v1) at (5,7.5) [ext] {$1$};
\node (v3) at (5,8.1)  {$*$};

\draw (v1) -- +(0,.6);

\draw (v0) -- (v1);

\draw[ultra thick] (v0) --  ($(v0)!0.4cm!(v1)$);
\draw[ultra thick] (v1) --  +(-125:0.4cm);
\end{tikzpicture}};

\node at (10.9,-0.3) {$+ d$};

\node at (11.5,0) {\begin{tikzpicture}[scale=1]
\node (v0) at (5,6.7) [ext] {$1$};
\node (v1) at (5,7.5) [ext] {$2$};
\node (v3) at (5,8.1)  {*};

\draw (v1) -- +(0,.6);

\draw (v0) -- (v1);

\draw[ultra thick] (v0) --  ($(v0)!0.4cm!(v1)$);
\draw[ultra thick] (v1) --  +(-45:0.4cm);
\end{tikzpicture}};

\end{tikzpicture}

corresponds in homology to the equation $\Delta \circ \cdot = [\ ,\ ] + \cdot \circ_1 \Delta + \cdot \circ_2 \Delta$.

Therefore, since the dimensions in every arity are the same (and finite), the operad $H(\CBr)$ is canonically isomorphic to $\BV$.
\end{proof}

\section{Operadic bimodule maps}\label{section: main thm in Rd}

 Given an operad $\cP$ and a resolution $\cP_\infty \aol \cP_\infty^{\text{bimod}} \aor \cP_\infty$ of the canonical bimodule $\cP \aol \cP \aor \cP$, an infinity morphism of $\cP_\infty$ algebras $A$ and $B$, can be expressed as the following bimodule map:

\begin{equation*}
\begin{tikzcd}[column sep=0.5em]				
\cP_\infty \arrow{d}{} &\aol &  \cP_\infty^{\text{bimod}} \arrow{d}{} & \aor &\cP_\infty^{\text{bimod}}\arrow{d}{} \\
\End B & \aol  & \Hom(A^{\otimes \bullet}, B)  &\aor & \End A,\\
\end{tikzcd}
\end{equation*}
where by $\End A$ we mean the operadic endomorphisms $\End A(n) = \Hom(A^\otimes n, A)$ and the bimodule structure on $\Hom(A^{\otimes \bullet}, B)$ is the natural one using composition of maps.
In this section we prove Theorem \ref{main theorem} by expressing it in terms of a morphism of bimodules.

\subsection{$Chains(\mathsf{F}\mathbb H_{m,n})\to \BVKGra$}\label{subsec: chains to graphs}
The topological operad of Cyclic Swiss Cheese type $\left(\FFM_2,\mathsf F \mathbb H_{m,n}\right)$ introduced in \ref{FFM} is in fact an operad on the category of semi-algebraic manifolds\cite{semi-algebraic manifolds,discs}. We consider the functor $Chains$ of semi-algebraic chains. This functor is monoidal so it induces a functor from semi-algebraic Cyclic Swiss Cheese type operads to dg Cyclic Swiss Cheese type operads.

In this section we define a morphism of Cyclic Swiss Cheese type operads 
\begin{equation}\label{chains to graphs}
\left(Chains(\FFM_2),Chains(\mathsf F \mathbb H_{m,n})\right) \to \left(\BVGra,\BVKGra\right).
\end{equation}

We start by defining a map $f_2\colon\BVKGra^*\to \Omega(\mathsf F \mathbb H_{m,n})$, where $\Omega$ is the functor sending a semi-algebraic manifold to its algebra of semi-algebraic forms.

Notice that $\mathsf F \mathbb H_{m,n}$ is a quotient of the configuration space of $m$ points in the upper half plane and $n$ points at the boundary by a group of conformal maps. The identification of $\mathbb H$ with the Poincar\'e Disk necessary for the definition of the cyclic action and the forgetful map is also conformal. Therefore, given a point $p$ in the upper half plane and a point $q$ either in the upper half plane or at the boundary  the angle between the hyperbolic line passing by the point at $\infty$ and $p$ and the hyperbolic line passing by the points $p$ and $q$ is well defined (up to a multiple of $2\pi$).

We define $d\phi^i_j \in \Omega^1(\mathsf F \mathbb H_{m,n})$, for $1\leq i\leq m$ and $1\leq j\leq n$ as the 1-form given by the angle made by the hyperbolic line defined by the point at $\infty$ and the point labeled by $i$ and the hyperbolic line defined by the point labeled by $i$ and the point labeled by $\overline j$.

 Similarly, $1\leq i\ne j\leq m$, we define $d\phi^{i,j} \in \Omega^1(\mathsf F \mathbb H_{m,n})$ as the 1-form given by the angle defined by the line passing by $\infty$ and $i$ and the line passing by $i$ and $j$.

Finally, we define $d\phi^{i,i}\in\Omega^1(\mathsf F \mathbb H_{m,n})$ as the 1-form corresponding to the angle between the line passing by $\infty$ and $i$ and the frame at $i$.

%\begin{tikzpicture}
%  \coordinate (center) at (1,2);
%  \def\radius{1.5cm}
%  % a circle
%  \draw (center) circle[radius=\radius];
%  
%   \fill (center) circle[radius=2pt] node[left] {$i$};
%   
%   \coordinate (j) at (2,1.5);
%   \fill (j) circle[radius=2pt] node[right] {$j$};
%   
%   \draw (center) ++(90:\radius)  -- (center) -- (j);
%   
%   \draw (1,2.2) arc (90:-20:0.2);
%   
%   \node at (1.5,2.2) {$\phi^{i,j}$}; 
%    
%
%  % a random point of the circle
%  \fill (center) ++(90:\radius) circle[radius=2pt] node[above] {$\infty$};
%\end{tikzpicture} TODO: maybe a more ``hyperbolic" picture?

\begin{figure}[h]
\begin{tikzpicture}[scale=2]
  \tkzDefPoint(0,0){O}
  \tkzDefPoint(1,0){A}
  \tkzDrawCircle(O,A)
  \tkzDefPoint(0.3,-0.25){j}
  \tkzDefPoint(-0.5,-0.5){i}
    \tkzDefPoint(0,1){z3}
  \tkzClipCircle(O,A)
  \tkzDrawCircle[orthogonal through=i and j](O,A)
    \tkzDrawCircle[orthogonal through=i and z3](O,A)
  \tkzDrawPoints[color=black,fill=black,size=12](i,j)
  \tkzLabelPoints(i,j)
  
  \fill (0,1) circle[radius=1.2pt] node[below left] {$\infty$};

  \node at (0.1,0) {$\phi^{i,j}$}; 
  \draw (-0.21,-0.03) arc (90:-20:0.2);
\end{tikzpicture}
\caption{The hyperbolic angle $\phi^{i,j}$}
\end{figure}
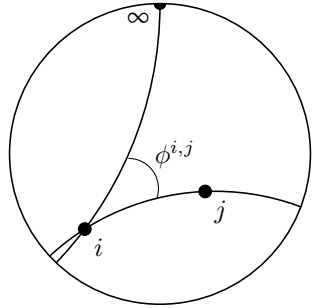

There is a canonical basis of $\BVKGra(m,n)$ given by the graphs and, by abuse of notation, we denote by the same graph the dual basis of $\BVKGra^*(m,n)$

Following the notation in $\ref{BVKGra}$, we define  $f_2(\Gamma^i_j) := \frac{d\phi^i_j}{2\pi}$ for $1\leq i \leq m$, $1\leq j \leq n$ and $f_2(\Gamma^{i,j}):=\frac{d\phi^{i,j}}{2\pi}$ for $i\ne j$ between $1$ and $m$.

$\BVKGra^*(m,n)$ admits a similar algebra structure by defining the product of two graphs as the superposition of edges. 
We extend the map $f_2$ to $\BVKGra^*$ by requiring it to be a morphism of unital algebras.

A $C_{n+1}$ action on $\BVKGra^*(m,n)$ can be defined via the pullback of the cyclic action on $\BVKGra(m,n)$. Notice that this is not the standard definition of an action of a group on the dual space (one normally uses the pullback via the inverse of the map), but since $C_{n+1}$ is abelian no problems arise from this.

$\Omega(\mathsf F \mathbb H_{m,n})$ inherits a $C_{n+1}$ cyclic action from the cyclic action in $\mathsf F \mathbb H_{m,n}$ (also by pullback). 

\begin{lemma}\label{gratoforms is equiv}
The map $f_2 \colon \BVKGra^*(m,n)\to \Omega(\mathsf F \mathbb H_{m,n})$ is $C_{n+1}$ equivariant.
\end{lemma}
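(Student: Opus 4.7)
The plan is to reduce the equivariance statement to a check on the one-edge generators $\Gamma^i_j$ and $\Gamma^{i,k}$, and then match the two sides by a direct algebraic/geometric comparison. Since $f_2$ is an algebra morphism by construction, and since both cyclic actions are compatible with the respective commutative algebra structures---pullback of forms along a diffeomorphism on the $\Omega(\mathsf F\mathbb H_{m,n})$ side, and the dual of the algebra-morphism action on $\BVKGra(m,n)$ on the other---it is enough to verify equivariance on the degree-$1$ generators. The compatibility of the dual action with the superposition product on $\BVKGra^*$ itself deserves a short verification, but it follows from the universal-property argument that the transpose of an algebra automorphism of a (graded) polynomial algebra, dualized with respect to the basis of monomials and equipped with the same monomial multiplication, remains an algebra morphism.

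On the algebraic side, I would read off the dual action by computing, for each one-edge basis element $\Gamma$, the coefficient of $\Gamma^i_j$ (resp.\ $\Gamma^{i,k}$) in $\sigma\Gamma$. The key input is the nontrivial formula $\sigma(\Gamma^i_1) = -\sum_{k=1}^n \Gamma^i_k - \sum_{k=1}^m \Gamma^{i,k}$, which after dualization produces a correction term $-(\Gamma^i_1)^*$ in essentially every case. I expect to obtain
\[
\sigma\cdot(\Gamma^i_j)^* = (\Gamma^i_{j+1})^* - (\Gamma^i_1)^* \ \ (1\le j\le n-1), \quad \sigma\cdot(\Gamma^i_n)^* = -(\Gamma^i_1)^*, \quad \sigma\cdot(\Gamma^{i,k})^* = (\Gamma^{i,k})^* - (\Gamma^i_1)^*.
\]

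On the geometric side, I would use that $\sigma$ relabels the boundary so that the old point $\overline 1$ plays the role of the new $\infty$, while the old $\infty$ becomes the new $\overline n$. An angle at $p_i$ measured after pullback therefore differs from the corresponding one in the original configuration by exactly the angle $\phi^i_1$ between the hyperbolic line $p_i$-$\infty$ and $p_i$-$\overline 1$. This yields
\[
\sigma^* d\phi^i_j = d\phi^i_{j+1} - d\phi^i_1 \ \ (1\le j\le n-1), \quad \sigma^* d\phi^i_n = -d\phi^i_1, \quad \sigma^* d\phi^{i,k} = d\phi^{i,k} - d\phi^i_1,
\]
which matches the algebraic computation term by term (after the factor $2\pi$).

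The main obstacle is the careful identification of the ``change of reference'' correction on the two sides: the $-(\Gamma^i_1)^*$ summands, which arise from the particular nontrivial form of $\sigma(\Gamma^i_1)$, should correspond exactly to the $-d\phi^i_1$ correction appearing when the basepoint $\infty$ is geometrically shifted to the position of the old $\overline 1$. The boundary cases $j = 1$ and $j = n$, together with the frame-angle generator $\Gamma^{i,i}$, require a final sanity check but follow the same pattern; beyond this bookkeeping, the argument is formal.
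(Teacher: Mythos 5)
Your proposal is correct and follows essentially the same route as the paper: reduce to one-edge generators by observing that both actions respect the algebra structures (the paper justifies the compatibility of the dual action with the superposition product via the identification $\bigwedge V^*\cong(\bigwedge V)^*$ on wedge-monomial bases, which is the same fact you invoke), then match the dualized action formulas $\left(\Gamma^i_j\right)^\sigma=\Gamma^i_{j+1}-\Gamma^i_1$, $\left(\Gamma^{i,k}\right)^\sigma=\Gamma^{i,k}-\Gamma^i_1$ against the pullbacks $d(\phi^i_{j+1}-\phi^i_1)$ and $d\phi^{i,k}-d\phi^i_1$ coming from the relocation of the basepoint $\infty$ to the old $\overline 1$. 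Your explicit treatment of the boundary case $j=n$ is just the paper's convention $\Gamma^i_{n+1}=0$ made visible.
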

\begin{proof}
Notice actually that the algebra structure on $\BVKGra(m,n)$ is in fact the exterior algebra $\bigwedge V$, where $V$ is the (finite dimensional) vector space concentrated in degree $-1$ spanned by all graphs with exactly one edge. 

We had defined the cyclic action on $V$, extended this action to $\bigwedge V$ by requiring the action to commute with the product and defined an action on $(\bigwedge V)^* = \BVKGra(m,n)$. Alternatively, the cyclic action on $V$ induces a cyclic action on $V^*$ which induces a cyclic action on $\bigwedge V^*$. Under the identification $\bigwedge V^*=(\bigwedge V)^*$ these two actions are the same. This is an immediate consequence of the fact that if $e_1,\dots,e_n$ are part of a basis of $V$ and $e^*_1,\dots,e^*_n$ are the corresponding parts of the dual basis, then $e_1\wedge\dots \wedge e_n$ is dual to $e^*_1\wedge\dots\wedge e^*_n$.
%http://mathoverflow.net/questions/68004/natural-pairings-between-exterior-powers-of-a-vector-space-and-its-dual

This allows us to conclude that the cyclic action on $\BVKGra^*(m,n)$ commutes with the product of graphs.

It is therefore enough to show that $f_2$ is equivariant with respect to one-edge graphs.

The cyclic action of $C_{n+1} = \langle\sigma\rangle$ on one-edge graphs in $\BVKGra^*(m,n)$ is given by $\left(\Gamma^{i,j}\right)^\sigma =\Gamma^{i,j} -\Gamma^i_1$ and $\left(\Gamma^i_j\right)^{\sigma} = \Gamma^i_{j+1}-\Gamma^i_1$ with the convention that $\Gamma^i_{n+1}=0$.

Since the cyclic action on $\mathsf F \mathbb H_{m,n}$ is by rotation of the $n$ points at the boundary with the point $\infty$, we have $\left(d\phi^{i}_j\right)^{\sigma} = d(\phi^{i}_j\cdot {\sigma}) =  d(\phi^{i}_{j+1} - \phi^{i}_1)$ and similarly $\left(d\phi^{i,j}\right)^{\sigma} = \left(d\phi^{i,j} - d\phi^i_1\right)$, therefore $f_2$ commutes with the action.
\end{proof}

Analogously, a map $f_1 \colon \BVGra^*(n) \to \Omega( \FFM_2)(n)$ can be defined on one-edge graphs by considering the angle with the vertical and extending as a morphism of algebras.

\begin{remark}\label{cooperadmap}
It is easy to check on generators that these maps
produce a map of colored cooperads\footnote{Strictly speaking, the right hand side is not a cooperad, but this does not affect what follows.}
$$(f_1,f_2) \colon \left(\BVGra^*,\BVKGra^*\right) \to\left(\Omega(\FFM_2),\Omega(\mathsf F \mathbb H_{m,n})\right).$$ 

Let us sketch the verification for the case of $\Gamma^{1,2}\in \BVKGra^*(2,0)$.

The composition map in $\left(\FFM_2, \mathsf F\mathbb H\right)$ is done by insertion at the boundary stratum with an appropriate rotation given by the framing. Since the cocomposition map is given by the pullback of the composition map, the part of the cocomposition given by  $\Omega(\mathsf F \mathbb H) \to \Omega(\mathsf F \mathbb H) \otimes \bigotimes \Omega(\FFM_2)$ sends $d\phi^{1,2} \in \mathsf F \mathbb H(2,0)$ to $d\phi^{1,1}\otimes 1+1\otimes d\phi^{1,2} \in \Omega(\mathsf F \mathbb H(1,0)) \otimes \Omega(\FFM_2(2))$ (recall Figure \ref{composition ffm}).

The corresponding cocomposition in $\BVKGra^*$ sends $\Gamma^{1,2}$ to 

$\left(\Gamma^{1,1} \otimes 
\begin{tikzpicture}[scale=0.5]
\node[label=$1$] at (0,0) [int]{};
\node[label=$2$] at (1,0) [int]{};
\end{tikzpicture}\right)$
$+ \left(1\otimes
\begin{tikzpicture}[scale=0.5]
\node[label=$1$] (1) at (0,0) [int]{};
\node[label=$2$] (2) at (1,0) [int]{};
\draw [->] (1)--(2);
\end{tikzpicture}\right) \in \BVKGra^*(2,0) \otimes \BVGra^*(2)$, therefore the diagrams commute. The general case for $\Gamma^{i,j}\in \BVKGra^*(m,n)$ is similar and all the remaining cases are as simple or even simpler to check.
\end{remark}

We define a map $g_1\colon Chains(\FFM_2) \to \Omega^*(\FFM_2)$ that maps every elementary semi-algebraic chain $c\in Chains(\FFM_2)$ to the linear form
$\omega \mapsto \int_c \omega$. Similarly we define $g_2\colon Chains(\mathsf F \mathbb H) \to \Omega^*(\mathsf F \mathbb H)$ sending a chain to integration over that chain.

Clearly $\BVKGra(m,n)$ is finite dimensional for a fixed degree, therefore its double dual of $\BVKGra(m,n)$ can be identified with the original space.

Finally, the map of Cyclic Swiss Cheese type operads \eqref{chains to graphs} that we were searching is defined as the composition
$$\left(Chains(\FFM_2),Chains(\mathsf F \mathbb H)\right)\xrightarrow{(g_1,g_2)}\left(\Omega^*(\FFM_2),\Omega^*(\mathsf F \mathbb H)\right) \xrightarrow{(f_1^*,f_2^*)} \left(\BVGra,\BVKGra\right).$$

This is a colored operad map as a consequence of Remark \ref{cooperadmap}, it commutes with the cyclic action as a consequence of Lemma \ref{gratoforms is equiv} and by hand one checks that $\mathbbm 1_{Chains(\mathsf F \mathbb H)}$ is sent to $\mathbbm 1_{\BVKGra}$.

Explicitly, given a chain $c\in Chains(\FFM_2)$, we have
$f_1^* \circ g_1(c)= \displaystyle\sum_\Gamma \Gamma\int_c f_1(\Gamma),$ where $\Gamma$ runs through all the graphs in $\BVGra$. This sum is finite because the integral is zero every time the degree of $\Gamma$ differs from the degree of the chain $c$.   

Recall section \ref{BVKGra} where we saw that given a Cyclic Swiss Cheese type operad $\cP$ one can endow the total space $\prod_n \cP^2(\cdot, n)[n]$ with a a $\CPT-\cP^1$-bimodule structure. Moreover, morphism of Cyclic Swiss Cheese type operads induce morphisms of bimodules. Therefore we obtain a bimodule map

\begin{equation*}
\begin{tikzcd}[column sep=0.5em]				
\CPT \arrow{d}{\id} &\aol &\displaystyle\prod_n Chains(\mathsf F \mathbb H_{\bullet, n}) [-n]\arrow{d}{} & \aor &Chains(\FFM_2)\arrow{d}{} \\
\CPT & \aol  & \displaystyle\prod_n\BVKGra(\cdot, n)[-n]  &\aor & \BVGra .\\
\end{tikzcd}
\end{equation*}

We choose a Maurer Cartan element $\mu\in \left(\prod_nChains(\mathsf F \mathbb H_{0, n}) [-n]\right)_2$ to be 
$\theta = \prod_{n\geq 2} c_n$, where $c_n$ is the fundamental chain of the space $\mathsf F \mathbb H_{0,n}$. 

It is easy to see that the image of $c_n$ is zero for $n>2$ and for $n=2$ is the single graph in $\BVKGra(0,2)[-2]$ with no edges.

By twisting both $\prod_nChains(\mathsf F \mathbb H_{\bullet, n}) [-n]$ and $\prod_n\BVKGra(\cdot, n)[-n]$ with respect to $\mu$ and its image, we get a map of $Tw \CPT$-modules $\prod_nChains^{\mu}(\mathsf F \mathbb H_{\bullet, n}) [-n]\to\prod_n\BVKGra^{\mu}(\cdot, n)[-n]$ where the superscript $\mu$ indicates that there is a changed differential induced by the Maurer-Cartan elements. Since the ideal generated by \eqref{Ideal} acts as zero, we can restrict our action to the subquotient $\CBr$, of $Tw \CPT$, thus obtaining a morphism of left $\CBr$-modules.

Since the right action of $Chains(\FFM_2)$ on $Chains(\mathsf F \mathbb H)$ is on the non-boundary points, and analogously, the action of $\BVGra$ on $\BVKGra$ is on the type II vertices, it is clear that the morphism commutes with the right action. We obtain then the following bimodule map:

\begin{equation}\label{bimodmap:Chains to Gra}
\begin{tikzcd}[column sep=0.5em]				
\CBr \arrow{d}{} &\aol &  \displaystyle\prod_n Chains^{\mu}(\mathsf F \mathbb H_{\bullet, n}) [-n] \arrow{d}{} & \aor &Chains(\FFM_2)\arrow{d}{} \\
\CBr & \aol  & \displaystyle\prod_n\BVKGra^{\mu}(\cdot, n)[-n]  &\aor & \BVGra .\\
\end{tikzcd}
\end{equation}

The projection map $p_{m,n} \colon \mathsf F \mathbb H_{m, n}\to \mathsf F \mathbb H_{m, 0}$ that forgets the points at the boundary  induces a  strongly continuous chain \cite{semi-algebraic manifolds} $p_{m,n}^{-1} \colon \mathsf F \mathbb H_{m, 0} \to Chains(\mathsf F \mathbb H_{m, n})$.
Intuitively the image of a configuration of points in $\mathsf F \mathbb H_{m, 0}$ is the same configuration of points but with $n$ points at the real line that are freely allowed to move. If we consider the complex $Chains(\mathsf F \mathbb H_{\bullet, 0}) = \bigoplus_{m\geq 1}Chains(\mathsf F \mathbb H_{m, 0})$, this induces a degree preserving map 
$$p^{-1} \colon Chains(\mathsf F \mathbb H_{\bullet, 0}) \to \displaystyle\prod_{n\geq 0} Chains^{\mu}(\mathsf F \mathbb H_{\bullet, n}) [-n].$$
%With $\mathbbm 1$ this wouldn't be degree preserving...

\begin{lemma}
$p^{-1}$ is a morphism of right $Chains(\FFM_2)$-modules and its image is a  $\CBr -Chains(\FFM_2)$-subbimodule.
\end{lemma}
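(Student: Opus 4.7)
The plan is to address the two claims separately. For the right-module property, observe that the right action of $Chains(\FFM_2)$ on $\prod_n Chains^{\mu}(\mathsf F \mathbb H_{\bullet,n})[-n]$ is by operadic insertion at the bulk (upper half plane) points, which never touches any boundary point. Since $p^{-1}$ only modifies the boundary (adjoining $n$ free boundary points to each configuration), the two operations commute on the nose, giving $p^{-1}(c) \circ_i \alpha = p^{-1}(c \circ_i \alpha)$ for every $c \in Chains(\mathsf F \mathbb H_{m,0})$ and $\alpha \in Chains(\FFM_2)(k)$.

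For the $\CBr$-subbimodule property, I would verify that the image of $p^{-1}$ is closed under the left action of each generator of $\CBr$. Recall that $\CBr$ is a subquotient of $Tw\,\CPT$, so its action factors through the $\CPT$ action on the twisted complex, which combines the three ingredients of the $\CPT$ action on a Cyclic Swiss Cheese operad (brace composition, cyclic permutation of boundary points together with $\infty$, and the forgetful map $\F$) with insertion of the Maurer-Cartan chains $c_k$ at internal vertices. It therefore suffices to check preservation of the image generator by generator.

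The brace-type generator $T^1_n$ inserts chains at bulk points and so commutes with $p^{-1}$ by the same argument as above. The cyclic variant $T^i_n$ further applies a cyclic rotation of the boundary labels together with $\infty$; since a chain in the image of $p^{-1}$ is by construction totally symmetric in the $n$ boundary labels (all boundary points being free), cyclic rotation returns another chain in the image. The forgetful variant $T^{i,i+1}_n$ adds one application of $\F$, which amounts to inserting the empty-configuration element $\mathbbm 1$ at a boundary slot and hence forgetting one free boundary point; the result lands in $p^{-1}\left(Chains(\mathsf F \mathbb H_{m,0})\right) \subset Chains(\mathsf F \mathbb H_{m, n-1})$. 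Finally, the twisted internal-vertex generator $T'_n$ plugs in a fundamental chain $c_k = p^{-1}(\mathsf F\mathbb H_{0,0})$ at the internal vertex, and insertion of such a free-boundary chain at a bulk point of another image-of-$p^{-1}$ chain is again free on the boundary.

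The main obstacle is the forgetful generator $T^{i,i+1}_n$: one must verify that the combinatorial recipe defining its action (cyclic rotation bringing the mark back towards the root, insertion of $\mathbbm 1$, and relabeling of the remaining boundary points) is compatible with the projection $p$. This reduces to the basic geometric fact that forgetting a free boundary point in $\mathsf F \mathbb H_{m,n}$ yields the chain of free configurations in $\mathsf F \mathbb H_{m,n-1}$, together with the fact that the cyclic group action on $\mathsf F \mathbb H$ permutes boundary labels. Once these compatibilities are in place, the remaining verifications are straightforward bookkeeping on generators.
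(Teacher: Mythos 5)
There are two genuine gaps here.

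First, you never check that $p^{-1}$ commutes with the differentials, and this is the main content of the first claim. The target $\prod_n Chains^{\mu}(\mathsf F \mathbb H_{\bullet,n})[-n]$ carries a \emph{twisted} differential, so it is not enough that $p^{-1}$ commutes with the right action on the nose. One has to observe that the boundary $\partial p_{m,n}^{-1}(c)$ splits into two kinds of strata: those where points in the upper half plane collide, which give $p_{m,n}^{-1}(\partial c)$, and those where the freely moving boundary points collide, which give a fiberwise boundary term $\pm p_{m,n}^{f\partial}(c)$. The whole point is that this second term is exactly the extra piece of the differential produced by twisting with respect to the Maurer--Cartan element $\mu = \prod_{n} c_n$. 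Without this identification the map $p^{-1}$ is not even a chain map, so the statement fails at the level of dg modules.

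Second, your treatment of $T^1_n$ rests on a misreading of the action. The braces operation $p_1\{p_2,\dots,p_n\}$ inserts $p_2,\dots,p_n$ into the \emph{color-2 slots} of $p_1$, i.e.\ at the boundary points of $\mathsf F\mathbb H$, not at the bulk points (the bulk points carry the right $Chains(\FFM_2)$ action). So the claim that $T^1_n$ ``commutes with $p^{-1}$ by the same argument as above'' does not apply. The correct argument is combinatorial: after summing over all planar insertions of the free-boundary chains $p^{-1}(c_2),\dots,p^{-1}(c_n)$ into the free boundary points of $p^{-1}(c_1)$, the component in arity $k$ is the sum over all distributions of $k$ free boundary points among the inserted chains and the ambient one, and this reassembles precisely into $p^{-1}$ applied to the projection --- hence the result lies in the image of $p^{-1}$. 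Your remarks on the cyclic generators $T^i_n$ and the forgetful generators $T^{i,i+1}_n$ (closure of the image under cyclic permutation and under forgetting a free boundary point) do match the paper's argument, but the $T^1_n$ case and the differential check are where the real work is, and both are missing or incorrect as stated.
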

\begin{proof}
The morphism clearly commutes with the right action. Let us check that $p^{-1}$ commutes with the differentials.

Let $c\in Chains(\mathsf F \mathbb H_{m, 0})$. 

The boundary term $\partial p_{m,n}^{-1}(c)$ has two kind of components. When at least two points at the upper half plane get infinitely close, giving us the term $p_{m,n}^{-1}(\partial c)$, and when points at the real line get infinitely close, giving us $\pm p_{m,n}^{f\partial}(c)$, where the $f\partial$ superscript represents that we are considering the boundary at every fiber.

Then, we have
$p^{-1}(\partial c) = \prod_{n\geq 0} p_{m,n}^{-1}(\partial c) =  \prod_{n\geq 0} \partial p_{m,n}^{-1}(c)  \pm p_{m,n}^{f\partial}(c)$. The first summand corresponds to the normal differential in $Chains(\mathsf F \mathbb H_{m, n})$ and the second summand is precisely the extra piece of the differential induced by the twisting.

It remains to check the stability under the left $\CBr$ action. It is enough to check the stability under the action of the generators $T_n^i, T_n^{i,i+1}, T'_n$ and $T'^{i,i+1}_n$.

Let $c_1, \dots, c_n\in Chains(\mathsf F \mathbb H_{\bullet, 0})$ of arbitrary degree. It is not hard to see that $$p^{-1}\circ p\left(T_n^1(p^{-1}(c_1),\dots,p^{-1}(c_n))\right) = T_n^1(p^{-1}(c_1),\dots,p^{-1}(c_n)).$$ This follows essentially from the fact that on the right hand side the projection in $Chains^{\mu}(\mathsf F \mathbb H_{\bullet, k}) [-k]$ is the sum over all the possibilities of distributing $k_i$ points on the boundary stratum of $c_i$, for $i=2,\dots,n$ and $k_1$ boundary points not infinitely close to any of these chains, with $k_1+...+k_n=k$, whereas the left hand is taking all of these possibilities into account at once.

For the remaining $T_n^{i}$, the stability follows from the remark that if a chain is in the image of $p^{-1}$, then any cyclic permutation of it is still in the image of $p^{-1}$. Since forgetting one of the boundary points of a chain in the image of $p^{-1}$ leaves it in the image of $p^{-1}$, we get stability under the action of $T_n^{j,j+1}$. 

The other generators follow from similar arguments.
\end{proof}

$p^{-1}$ is right inverse to the projection map, therefore it is an embedding of right $Chains(\FFM_2)$-modules. We can therefore transport back the left $\CBr$ action on its image, making $p^{-1}$ a morphism of $\CBr-Chains(\FFM_2)$-bimodules.

By composition with the map \eqref{bimodmap:Chains to Gra}, we obtain the following bimodule map:

\begin{equation}\label{Chains to KGra}
\begin{tikzcd}[column sep=0.5em]				
\CBr \arrow{d}{} &\aol &   Chains(\mathsf F \mathbb H_{\bullet, 0}) \arrow{d}{} & \aor &Chains(\FFM_2)\arrow{d}{} \\
\CBr & \aol  & \displaystyle\prod_n\BVKGra^{\mu}(\cdot, n)[-n]  &\aor & \BVGra .\\
\end{tikzcd}
\end{equation}

\subsection{A representation on the colored vector space $\Dpoly\oplus\Tpoly$}\label{section:representation on T+D}

In this section we drop the $\mathbb R^d$ from the notation $\Tpoly$, $\TDpoly$ and $\Dpoly$, for simplicity. In Section \ref{section:globalization} we globalize the results obtained here.

Let $x_1,\dots,x_n$ be coordinates in $\mathbb R^n$ and let $\xi_1, \dots, \xi_n$ be the corresponding basis of vector fields.
We define an action of $\BVGra$ on the graded algebra of multivector fields $\Tpoly$ in $\mathbb R^d$ by setting
$$\Gamma(X_1,\dots, X_k) = \left(\prod_{(i,j)\in\Gamma} \sum_{l=1}^d \frac{\partial}{\partial x_l^{(j)}}\wedge \frac{\partial}{\partial \xi_l^{(i)}} \right) (X_1\wedge\dots\wedge X_k),$$
where $\Gamma\in \BVGra(k)$, $X_1,\dots,X_k$ are multivector fields, the product runs over all edges of $\Gamma$ in the order given by the numbering of edges and the superscripts $(i)$ and $(j)$ mean that the partial derivative is being taken on the $i$-th and $j$-th component of $X_1,\dots,X_k$. This is equivalent to an operad morphism $\BVGra \to \End \Tpoly$.

Seeing $\Gamma$ as an element of $\BVGra(m+n)$ and, using the action of $\BVGra$ in $\Tpoly$, together with the fact that $C^\infty$ funtions are degree zero multivector fields we define a map $g\colon\BVKGra(m, n) \to \Hom(\Tpoly^{\otimes m}\otimes C_c^\infty(\mathbb R^d)^{\otimes n},C_c^\infty(\mathbb R^d))$ by
\begin{equation}\label{composition1}
g(\Gamma)(X_1,\dots,X_m)(f_1,\dots,f_n) = \Gamma(X_1,\dots,X_m,f_1,\dots,f_n). \footnote{We set all $\xi_i=0$.}
\end{equation}

These two maps form a colored operad morphism from $\left(\BVGra, \BVKGra\right)$ to the Swiss Cheese type operad $\left(\End \Tpoly , \Hom(\Tpoly^{\otimes m}\otimes C_c^\infty(\mathbb R^d)^{\otimes n},C_c^\infty(\mathbb R^d))\right)$, a suboperad of the colored operad $\End \left(\Tpoly \oplus C^\infty_c(\mathbb R^d)\right)$.

The Tensor-Hom adjunction allows us to rewrite $\Hom(\Tpoly^{\otimes m}\otimes C_c^\infty(\mathbb R^d)^{\otimes n},C_c^\infty(\mathbb R^d))$ as $\Hom\left(\Tpoly^{\otimes m}, \Hom\left( C_c^\infty(\mathbb R^d)^{\otimes n},C_c^\infty(\mathbb R^d)\right)\right)$ and the bilinear form $\int \colon C_c^\infty(\mathbb R^d) \otimes C_c^\infty(\mathbb R^d) \to \mathbb R$ induces a map 

\begin{equation}\label{composition2}
\Hom\left(\Tpoly^{\otimes m}, \Hom\left( C_c^\infty(\mathbb R^d)^{\otimes n},C_c^\infty(\mathbb R^d)\right)\right)\to 
\Hom\left(\Tpoly^{\otimes m}, \Hom\left( C_c^\infty(\mathbb R^d)^{\otimes n+1},\mathbb R\right)\right).
\end{equation}

There is a natural $C_{n+1}$ action on $\Hom\left(\Tpoly^{\otimes m},\Hom\left( C_c^\infty(\mathbb R^d)^{\otimes n+1},\mathbb R\right)\right)$ given by the action on $C_c^\infty(\mathbb R^d)^{\otimes n+1}$ and also a distinguished element $\mathbbm 1$ map given by the insertion of the constant function $\equiv 1$ on the first input of $\Hom\left( C_c^\infty(\mathbb R^d)^{\otimes n+1},\mathbb R\right)$.

\begin{lemma}
With the above described map and cyclic action, the composition of the maps \eqref{composition1} and \eqref{composition2} induces a morphism of Cyclic Swiss Cheese type operads

$$ \left(\BVGra,\BVKGra\right) \to \left( \End \Tpoly, \Hom\left(\Tpoly^{\otimes \bullet}, \Hom( C_c^\infty(\mathbb R^d)^{\otimes \bullet +1},\mathbb R)\right)\right). $$
\end{lemma}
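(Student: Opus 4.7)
The plan is to verify the three defining properties of a morphism of Cyclic Swiss Cheese type operads: compatibility with the colored operadic composition, equivariance with respect to the cyclic actions, and preservation of the distinguished element. The preceding discussion already establishes that the map \eqref{composition1} is a colored operad morphism from $(\BVGra,\BVKGra)$ into the Swiss-Cheese suboperad with color $2$ component $\Hom(\Tpoly^{\otimes m}\otimes C_c^\infty(\mathbb R^d)^{\otimes n},C_c^\infty(\mathbb R^d))$. Post-composing with \eqref{composition2} just applies the bilinear pairing $\int\cdot\,d\vol$ to the output slot; since the color-$2$ operadic composition acts on the $C_c^\infty$ inputs and the color-$1$ composition acts on the $\Tpoly$ slot, neither composition touches the output, so the composite is still an operad morphism.

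For the cyclic equivariance, note that $\BVKGra(m,n)$ is generated as an algebra (under superposition of edges) by the one-edge graphs $\Gamma^i_j$ and $\Gamma^{i,k}$, and the cyclic action was defined so as to distribute over this product. The analogous product structure on the target side is composition of partial-derivative operators between fixed positions of the $f_\ell$'s, and cyclic permutation of the $n+1$ inputs likewise distributes over such compositions. It therefore suffices to verify equivariance on one-edge generators, i.e.\ the identity
\[
\int f_1\cdot\Gamma(X_1,\dots,X_m)(f_2,\dots,f_{n+1})\,d\vol \;=\; \int f_2\cdot \sigma(\Gamma)(X_1,\dots,X_m)(f_3,\dots,f_{n+1},f_1)\,d\vol.
\]
For $\Gamma^i_j$ with $j>1$ and for $\Gamma^{i,k}$ no derivative touches the pairing input $f_1$, so both sides agree by a relabeling, matching $\sigma(\Gamma^i_j)=\Gamma^i_{j-1}$ and $\sigma(\Gamma^{i,k})=\Gamma^{i,k}$. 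The essential case is $\Gamma^i_1$, where a single $\partial_l$ sits on $f_1$; compact support lets us integrate by parts and discard boundary terms, and the Leibniz rule then redistributes $-\partial_l$ over the remaining arguments, producing exactly one contribution $-\Gamma^i_k$ for each boundary input $f_k$ and one contribution $-\Gamma^{i,j}$ for each component of the remaining $X_j$'s. This is precisely $\sigma(\Gamma^i_1)=-\sum_k\Gamma^i_k-\sum_j\Gamma^{i,j}$.

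For the unit, the empty graph $\mathbbm 1_{\BVKGra}\in\BVKGra(0,0)$ is sent by \eqref{composition1} to the constant function $1\in C_c^\infty(\mathbb R^d)=\Hom(C_c^\infty(\mathbb R^d)^{\otimes 0},C_c^\infty(\mathbb R^d))$ and then by \eqref{composition2} to the functional $f\mapsto\int f\cdot 1\,d\vol$, which is by definition the distinguished element of the target. The main obstacle in this proof is the sign and index bookkeeping in the integration-by-parts step for $\Gamma^i_1$, but once the boundary terms are discarded using compact support, the Leibniz rule forces the two kinds of terms (boundary-input vs.\ multivector-component) in exactly the combination prescribed by the definition of $\sigma$ on $\BVKGra$; the remaining cases and the preservation of the $\cP^1$-module structure follow automatically from the operad-morphism property already proved.
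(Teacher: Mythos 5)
Your proposal is correct and follows essentially the same route as the paper: reduce cyclic equivariance to one-edge graphs via multiplicativity of the graph product and of the target, handle $\Gamma^i_j$ ($j\ne 1$) and $\Gamma^{i,k}$ by relabeling, and treat $\Gamma^i_1$ by integration by parts against $f_1$ using compact support. The only point to tighten is your Leibniz-rule enumeration: the derivative also lands on the coefficient functions of $X_i$ itself, which is what produces the tadpole term $-\Gamma^{i,i}$ in $\sigma(\Gamma^i_1)=-\sum_k\Gamma^i_k-\sum_j\Gamma^{i,j}$ (the paper makes this explicit by writing $X_i=\sum_k\psi_k\,\partial/\partial x_k$, so that the divergence term $\partial\psi_k/\partial x_k$ appears).
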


\begin{proof}
It is clear that the map is a morphism of colored operads and it sends one distinguished element to the other. It is enough to check the compatibility with the cyclic action.

Notice that the image of a graph under the morphism $$\BVKGra(m,n) \to \Hom\left(\Tpoly^{\otimes m}, \Hom( C_c^\infty(\mathbb R^d)^{\otimes n+1},\mathbb R)\right)$$ actually lands inside of $\Hom\left(\Tpoly^{\otimes m}, {\TDpoly}(n)\right)$ and this space is an algebra with product given by the product of functions.

It is clear by the definition of this morphism that it commutes with products, therefore to check the compatibility with the cyclic action it is enough to check it on graphs with just one edge.

Let $\Gamma^i_j \in \BVKGra(m,n)$. Recall that the action of the generator $\sigma$ of $C_{n+1}$ on $\Gamma^i_j$ is $\sigma(\Gamma^i_j)= \Gamma^i_{j-1}$ if $j\ne 1$ and 
 $\sigma(\Gamma^i_1) = -\sum_{k=1}^n \Gamma^i_k - \sum_{k=1}^m \Gamma^{i,k}$.
 The action of $\sigma$ on $\Gamma^{i,j}\in\BVKGra(m,n)$ is $\sigma(\Gamma^{i,j})= \Gamma^{i,j}$, for $1\leq i,j\leq m$.

Let $X_1,\dots,X_m\in \Tpoly$ and let $f_0,\dots,f_{n}\in C^\infty(\mathbb R^d)$.

Notice that $g(\Gamma^i_1)(X_1,\dots,X_m)$ can only be non-zero if all the $X_j$, for $j\ne i$ are in ${\Tpoly}^0 = C^\infty(\mathbb R^d)$ and $X_i \in {\Tpoly}^1 = \Gamma(\mathbb R^d, T_{\mathbb R^d})$.

 The operator $\left( g(\Gamma^i_1)(X_1,\dots,X_m)\right)^{\sigma}$ is defined by
$$\int f_0g(\Gamma^i_1)(X_1,\dots,X_m)(f_1,\dots,f_{n}) = \int f_1\left( g(\Gamma^i_1)(X_1,\dots,X_m)\right)^{\sigma}(f_2\dots,f_{n},f_0),$$ i.e., by ``taking the derivatives from $f_1$".

Let us write $X_i = \sum_{k=1}^d \psi_k \frac{\partial }{\partial x_k}$. Expanding the first integral we have 

\begin{flalign*}
&\int f_0g(\Gamma^i_1)(X_1,\dots,X_m)(f_1,\dots,f_{n})=\\
&\sum_{k=1}^d\int \frac{\partial f_1}{\partial x_k}\psi_k X_1\dots\hat{X_i}\dots X_mf_2\dots f_{n}f_0=\\
&-\sum_{k=1}^d\int  f_1\frac{\partial \psi_k}{\partial x_k}X_1 \dots \hat{X_i}\dots X_m f_0 f_2 \dots f_{n}+f_1\psi_k\frac{\partial X_1}{\partial x_k}X_2\dots \hat{X_i}\dots X_mf_2\dots f_{n}f_0 + \\ 
&+\dots + f_1\psi_kX_1\dots \hat{X_i},\dots X_mf_2\dots f_n\frac{\partial f_0}{\partial x_k}.
\end{flalign*}

Therefore 

\begin{flalign*}
& \left( g(\Gamma^i_1)(X_1,\dots,X_m)\right)^{\sigma}(a_1,\dots,a_n) =\\
& - \Gamma^{i,i}(X_1,\dots,X_m,a_1,\dots,a_n)- \sum_{k=1,k\ne i}^m \Gamma^{i,k}(X_1,\dots,X_m,a_1,\dots,a_n) -\sum_{k=1}^n \Gamma^i_k(X_1,\dots,X_m,a_1,\dots,a_n)=\\
&g( - \sum_{k=1}^m \Gamma^{i,k}-\sum_{k=1}^n \Gamma^i_k)(X_1,\dots,X_m)(a_1,\dots,a_n) =\\
&g( \Gamma^i_1 \cdot \sigma) (X_1,\dots,X_m)(a_1,\dots,a_n) .
\end{flalign*} 

The verification for the case $\Gamma^{i,j}$ is trivial and the case $\Gamma^i_j$ with $j\ne 1$ is also immediate because there is only permutation of variables involved.
\end{proof}

We obtain then a bimodule map

\begin{equation}\label{KGra to End}
\begin{tikzcd}[column sep=0.5em]				
\CPT \arrow{d}{\id} &\aol &\displaystyle\prod_n\BVKGra(\cdot, n)[-n] \arrow{d}{} & \aor &\BVGra\arrow{d}{} \\
\CPT & \aol  & \displaystyle\prod_n \Hom\left(\Tpoly^{\otimes \bullet}, \Hom( C_c^\infty(\mathbb R^d)^{\otimes n+1},\mathbb R)\right)[-n]  &\aor & \End \Tpoly .\\
\end{tikzcd}
\end{equation}

The image of the Maurer-Cartan element 
\begin{tikzpicture}
\node[label=below :$\overline{1}$]  at (0.5,0) [int] {};
\node[label= below : $\overline{2}$]  at (1.5,0) [int] {};
\draw (0,0)--(2,0);
\end{tikzpicture}$\in \BVKGra(0,2)[-2]$ is the element induced by the multiplication map $\mu \colon C_c^\infty(\mathbb R^d)^{\otimes 2} \to C_c^\infty(\mathbb R^d)$. 

By twisting with respect to these Maurer-Cartan elements we obtain a map of $Tw \CPT$ from $\displaystyle\prod_n\BVKGra^{\mu}(\cdot, n)[-n]$  to $ \Hom^{\mu}\left(\Tpoly^{\otimes \bullet}, \displaystyle\prod_n \Hom( C_c^\infty(\mathbb R^d)^{\otimes n+1},\mathbb R)[-n]\right)$. Notice that in this last space, the differential coming from the twisting is the same as the one induced by the Hochschild differential and the degrees also agree with the Hochschild complex. In fact, the image of the map \eqref{KGra to End} lands in $\Hom\left(\Tpoly^{\otimes \bullet}, \Dpoly\right)$.

Since \begin{tikzpicture}[scale=1]
\node (v0) at (0,0) [int] {.};
\draw (v0) -- (0,0.6);
\draw (v0) -- (0,-0.6);
\draw[ultra thick] (v0) --  +(-45:0.4cm);
\end{tikzpicture}
\begin{tikzpicture}[scale=1]
\node at (0,0) {};
\node at (0,0.5) {$-$};
\end{tikzpicture}
\begin{tikzpicture}[scale=1]

\draw (0,0.6) -- (0,-0.6);
\end{tikzpicture}$\in Tw \CPT$ acts trivially on both spaces, this induces an action of its subquotient $\CBr$, therefore we obtain the following maps of bimodules:

\begin{equation}\label{bimodmap:Gra to Hom}
\begin{tikzcd}[column sep=0.5em]				
\CBr \arrow{d}& \aol  & \displaystyle\prod_n\BVKGra^{\mu}(\cdot,n)[-n] \arrow{d} &\aor & \BVGra \arrow{d}\\
\CBr &  \aol  &  \Hom (\Tpoly^{\otimes \bullet},\Dpoly)      &  \aor   & \End \Tpoly.\\
\end{tikzcd}
\end{equation} 

Also, the $\CBr$ action on $\Hom(\Tpoly^{\otimes \bullet},\Dpoly)$ comes from the action of $\CBr$ on $\Dpoly$ (as seen in \ref{Dpoly}), which translates into an operadic morphism $\CBr\to \End \Dpoly$.
Thus, by composition with the map \eqref{Chains to KGra} we obtain

\begin{equation*}
\begin{tikzcd}[column sep=0.5em]				
\CBr \arrow{d}{} &\aol & Chains(\mathsf F \mathbb H_{\bullet, 0}) \arrow{d}{} & \aor &Chains(\FFM_2)\arrow{d}{} \\
\End \Dpoly &  \aol  &  \Hom(\Tpoly^{\otimes \bullet},\Dpoly)      &  \aor   & \End \Tpoly.\\
\end{tikzcd}
\end{equation*}
\subsection{A zig-zag of quasi-torsors}
Let us recall the definition of an \textit{operadic quasi-torsor} from \cite{operadic torsors}: 

\begin{defi}
Let $\mathcal P$ and $\mathcal Q$ be two differential graded operads and let $M$ be a $\mathcal{P}-\mathcal Q$ operadic differential graded bimodule, i.e., there are compatible actions $$\mathcal P \aol M \aor \mathcal Q.$$
We say that $\cM$ is a $\mathcal P$-$\mathcal Q$ quasi-torsor if there is an element $\mathbf 1 \in M^0(1)$ such that the canonical maps
\begin{equation}\label{modmaps}
\begin{aligned}
 l\colon \cP &\to \cM  \quad\quad\quad\quad\quad & r\colon\cQ & \to \cM \\
 p   &\mapsto p\circ (\mathbf 1,\dots, \mathbf 1)  & q &\mapsto \mathbf 1\circ q \\
\end{aligned}
\end{equation}
are quasi-isomorphisms.
\end{defi}

\begin{lemma}\label{lemma:quasi-torsor}
$Chains(\mathsf F \mathbb H_{\bullet, 0})$ is a $\CBr-Chains(\FFM_2)$ quasi-torsor.
\end{lemma}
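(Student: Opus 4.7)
The plan is to take the unit to be $\mathbf{1} := \mathbbm{1}_{Chains(\mathsf F \mathbb H)} \in Chains(\mathsf F \mathbb H_{1, 0})^0$, the $0$-chain represented by a single framed point at the location $(0,1) \in \mathbb H$ with frame pointing upwards; this is precisely the distinguished element of the Cyclic Swiss Cheese operad $(\FFM_2, \mathsf F \mathbb H)$ at arity $(1,0)$.

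First, I would show that $r \colon Chains(\FFM_2(n)) \to Chains(\mathsf F \mathbb H_{n, 0})$ is a quasi-isomorphism for every $n \geq 1$. Geometrically, $r$ identifies $\FFM_2(n)$ with the boundary stratum of $\mathsf F \mathbb H_{n, 0}$ where all $n$ framed points collapse at $(0,1)$. Using the scaling and horizontal translation to normalize each class so that its centroid lies at $(0,1)$, the homotopy
$$H_t(z_1, \ldots, z_n) := \bigl(t(z_i - (0,1)) + (0,1)\bigr)_{i=1}^n, \quad t \in [0,1],$$
is well-defined (the imaginary parts $1 + t(\mathrm{Im}\, z_i - 1)$ remain non-negative on $[0,1]$, so the configuration stays in $\overline{\mathbb H}$), continuous, centroid-preserving, equal to the identity at $t=1$, and landing in the image of $r$ at $t=0$. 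This gives a semi-algebraic deformation retraction of $\mathsf F \mathbb H_{n, 0}$ onto $r(\FFM_2(n))$, so $r$ is a quasi-iso since the semi-algebraic chains functor sends semi-algebraic homotopy equivalences to quasi-isomorphisms.

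Second, I would tackle $l \colon \CBr \to Chains(\mathsf F \mathbb H_{\bullet, 0})$. By the corollary of the previous section $H(\CBr) = \BV$, and by the previous step $H(Chains(\mathsf F \mathbb H_{\bullet, 0})) \cong H(\FFM_2) = \BV$. By construction $l$ is a map of left $\CBr$-modules, and it commutes with the right $Chains(\FFM_2)$-action (the right action only affects the ``output side'', while $l$ inserts $\mathbf{1}$'s on the ``input side''). Descending to homology, $l$ becomes a $\BV$-bimodule map from the regular bimodule $\BV$ to $\BV$ (with the induced structure). Since $l(\mathrm{id}_{\CBr}) = \mathbf{1}$ corresponds to the unit of $\BV$ under the identification given by $r$, any such bimodule map sending unit to unit must be the identity, hence $l$ is a quasi-iso.

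The hard part will be verifying that the induced $\BV$-bimodule structure on $H(Chains(\mathsf F \mathbb H_{\bullet, 0})) \cong \BV$ coincides with the regular $\BV$-bimodule structure on itself. The right action is essentially built-in since $Chains(\FFM_2)$ is a model for $\BV$ acting operadically; the left $\CBr$-action, however, was defined by twisting with the Maurer-Cartan element $\theta$ and restricting via the embedding $p^{-1}$, so one must trace through these constructions to confirm the descended $\BV$-action agrees with operadic composition. Once this compatibility is settled, both quasi-isomorphisms $l$ and $r$ follow as above.
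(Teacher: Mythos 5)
Your overall strategy coincides with the paper's: the same unit $\mathbf 1$ (a single framed point with upward frame), a homotopy-equivalence argument for $r$, and a ``both homologies are $\BV$'' argument for $l$. Your explicit centroid-rescaling retraction for $r$ is a more detailed version of the paper's one-line claim that the inclusion of the fully-collapsed boundary stratum is a homotopy equivalence; you should still note that the homotopy must extend (semi-algebraically) to all boundary strata of the compactification $\mathsf F\mathbb H_{n,0}$, but that is routine.

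The genuine gap is in the argument for $l$, and you have located it yourself: everything reduces to knowing that the left $\CBr$-action on $H(Chains(\mathsf F\mathbb H_{\bullet,0}))\cong \BV$ descends to the regular action, and you leave this as an unverified ``hard part''. That identification \emph{is} the content of the lemma; without it, the step ``a module map sending the unit to the unit is the identity'' has nothing to act on. The paper closes exactly this hole by a short concrete computation: it evaluates $l$ on the three generating classes of $\CBr$ and checks that the $\Delta$-tree is sent to the fundamental chain of the circle $\mathsf F\mathbb H_{1,0}\cong S^1$, the product-tree to the $0$-chain of two horizontally aligned points with upward frames, and the bracket-tree to the $1$-chain of two points rotating around each other --- i.e.\ precisely to representatives of the standard generators of $H(\FFM_2)=\BV$ (Getzler's theorem). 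Since $H(\CBr)=\BV$ is generated by these classes and $l$ is a left module map sending the unit to the unit, this pins down the induced action and yields the quasi-isomorphism. Separately, your assertion that $l$ ``commutes with the right $Chains(\FFM_2)$-action'' does not typecheck: the right action on the source $\CBr$ is by $\CBr$ itself, not by $Chains(\FFM_2)$. Fortunately the quasi-torsor definition only requires $l$ and $r$ to be quasi-isomorphisms of complexes in each arity, so no right-module compatibility of $l$ is needed; being a map of left $\CBr$-modules suffices once the induced action is identified.
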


\begin{proof}

Let us consider the element $\mathbf 1\in Chains_0(\mathsf F \mathbb H_{1, 0})$ corresponding to a single point on the upper half plane with frame is pointing upwards.

Let $i\colon \FFM_2 \to \mathsf F \mathbb H_{\bullet, 0}$ be the map that sends a configuration in $c\in\FFM_2$ to the configuration in $\mathsf F \mathbb H_{\bullet, 0}$ given by one boundary stratum on the upper half plane with $c$ on it. It is clear that $i$ is a homotopy equivalence (with homotopy inverse being the map that ``forgets'' the boundary of the upper half plane).
The map $r\colon Chains(\FFM_2) \to Chains(\mathsf F \mathbb H_{\bullet, 0})$, as in Definition \ref{modmaps} is the image of $i$ via the functor $Chains$. Since $i$ is a homotopy equivalence, $r$ is a quasi-isomorphism.

It was shown in \cite{framed homology} that $H(\FFM_2)= \BV$.

The map $l$ sends \begin{tikzpicture}[scale=1]
\node (v0) at (0,0) [ext] {1};
\node (ro) at (0,0.6) {*};

\draw (v0) -- (0,0.6);
\draw[ultra thick] (v0) --  +(-45:0.5cm);
\end{tikzpicture}$\in \CBr_{-1}(1)$ to the fundamental chain of the circle. It sends  \begin{tikzpicture}[scale=0.8]

\node (ro) at (1,1.7) {$*$};
\node (e0) at (1,1) [int] {$*$};
\node (e1) at (0.5,0) [ext] {$1$};
\node (e2) at (1.5,0) [ext] {$2$};
\draw (e0) -- +(0,.7);
\draw[-] (e1) edge (e0) (e2) edge (e0);

%\draw[line width=1.8] (e0) -- +(90:0.5cm);

\draw[ultra thick] (e1) --  ($(e1)!0.5cm!(e0)$);
\draw[ultra thick] (e2) --  ($(e2)!0.5cm!(e0)$);
\end{tikzpicture} to the zero chain consisting of two horizontally aligned points in the upper half plane with frames pointing upwards.
And it sends \begin{tikzpicture}[scale=1]
\node (v0) at (5,6.7) [ext] {$2$};
\node (v1) at (5,7.5) [ext] {$1$};
\node (v3) at (5,8.1)  {$*$};
\node (v10) at (6.2,7.5) [ext] {2};
\node (v9) at (6.2,6.7) [ext] {1};
\node (v12) at (6.2,8.1)  {*};
\node (v14) at (5.6,7.1)  {+};

\draw (v1) -- +(0,.6);
\draw (v10) -- +(0,.6);

\draw (v0) -- (v1);
\draw (v9) -- (v10);

\draw[ultra thick] (v0) --  ($(v0)!0.4cm!(v1)$);
\draw[ultra thick] (v1) --  ($(v1)!0.4cm!(v3)$);

\draw[ultra thick] (v10) --  ($(v10)!0.4cm!(v12)$);
\draw[ultra thick] (v9) --  ($(v9)!0.4cm!(v10)$);
\end{tikzpicture} to the 1-chain corresponding to two points rotating around each other.

Since the homologies of $\CBr$ and of $\mathsf F \mathbb H_{\bullet, 0}$ are both $\BV$ and $l$ sends (representatives of) generators to (representatives of) generators, $l$ is a quasi-isomorphism.
\end{proof}

The main Theorem of \cite{operadic torsors} states that if the $\cP-\cQ$-bimodule $M$ is an operadic quasi-torsor, then there is a zig-zag of quasi-isomorphisms connecting $\mathcal P \aol M \aor \mathcal Q$ to the canonical bimodule $\mathcal P \aol \cP \aor \mathcal P$.

It follows then from Lemma \ref{lemma:quasi-torsor} that there is a zig-zag of bimodules 

$$
\begin{tikzcd}[column sep=0.5em]
%\Br_\infty \arrow{d}{} &\aol & \Br_\infty^{\text{bimod}}  \arrow{d}{} & \aor &\Br_\infty  \arrow{d}{} \\
\CBr \arrow{d}{} & \aol& \CBr\arrow{d}{}& \aor& \CBr\arrow{d}{} \\
\cdots & \aol& \cdots & \aor& \cdots \\
\CBr \arrow{u}{}& \aol  & Chains(\mathsf F \mathbb H_{\bullet, 0}) \arrow{u}{}  &\aor & Chains(\FFM_2) \arrow{u}{} .
\end{tikzcd} $$

Let $\CBr_\infty^{\text{bimod}}$ be a cofibrant resolution of the canonical bimodule $\CBr$. $\CBr_\infty^{\text{bimod}}$ is a $\CBr_\infty-\CBr_\infty$-bimodule, where $\CBr_\infty$ is a cofibrant resolution of the operad $\CBr$.

Finally, the zig-zag can be lifted up to homotopy to a bimodule map

$$\begin{tikzcd}[column sep=0.5em]				
\CBr_\infty \arrow{d}{} &\aol &\CBr_\infty^{\text{bimod}} \arrow{d}{} & \aor &\CBr_\infty \arrow{d} \\
\End \Dpoly &  \aol  &  \Hom(\Tpoly^{\otimes \bullet},\Dpoly)      &  \aor   & \End \Tpoly.\\
\end{tikzcd}$$

giving us the desired quasi-isomorphism and thus proving Theorem \ref{main theorem}. 

It also follows from Lemma \ref{lemma:quasi-torsor} and \cite{operadic torsors} that $\CBr$ is quasi-isomorphic to $Chains(\FFM_2)$. Due to the formality of $\FFM_2$ \cite{formality framed discs}, it follows that we can replace $\CBr_\infty$ in Theorem \ref{main theorem} by any cofibrant replacement of the operad $\BV$.

\section{Globalization}\label{section:globalization}
Let $M$ be a $d$-dimensional oriented manifold.
In this section we globalize the $\BV_\infty$ quasi-isomorphism $\Tpoly(\mathbb R^d) \to \Dpoly(\mathbb R^d)$ from Theorem \ref{main theorem} to a quasi-isomorphism $\Tpoly(M) \to \Dpoly(M)$, thus proving Theorem \ref{main global}. To do this we use standard formal geometry techniques.

\subsection{The idea:}\label{subsection:idea}
We refer the reader to the paper \cite{Dolgushev}, from which we borrow the notation.
 
 Theorem \ref{main theorem} is valid if we replace $\mathbb R^d$ by $\Rformal$, its formal completion at the origin, i.e., the space whose ring of functions is given by formal power series on the coordinates $x_1,\dots,x_d$.  \\
 
 We  consider $\mathcal T_\text{poly}$ (resp. $\mathcal D_\text{poly}$),  the vector bundle on $M$ of fiberwise formal multivector fields (resp. multidifferential operators) tangent to the fibers. We can then construct the vector bundles $\Omega(\mathcal T_\text{poly}, M)$ of forms valued in $\mathcal T_\text{poly}$ and $\Omega(\mathcal D_\text{poly}, M)$ of forms valued in $\mathcal D_\text{poly}$ with appropriate differentials. 
 
The fibers of the bundles $\mathcal T_\text{poly}$ and $\mathcal D_\text{poly}$ are isomorphic to $\Tpoly(\Rformal)$ and $\Dpoly(\Rformal)$, respectively. Therefore, the formal version of the formality map can be used to find a vector bundle $\CBr_\infty$ quasi-isomorphism

\begin{equation}\label{fiberwise morphism}
  U^f\colon \Omega(\mathcal T_\text{poly}, M) \to \Omega(\mathcal D_\text{poly}, M). \footnote{Using the fact that the formality morphism is invariant by linear transformation of coordinates.}
\end{equation}

These two vector bundles can be related with $\Tpoly(M)$ and $\Dpoly(M)$. In fact,  with an appropriate change of differential that comes from a choice of a flat connection, $\Omega(\mathcal T_\text{poly}, M)$ becomes a resolution of $\Tpoly(M)$ and $\Omega(\mathcal D_\text{poly}, M)$ becomes a resolution of $\Dpoly(M)$. This change of differential can be seen locally as a twist via a Maurer-Cartan element $B$ in $\Omega^1(\mathcal T^1_\text{poly}, U) = \Omega^1(\mathcal D^1_\text{poly}, U)$. However, the linear part of $B$ (in the fiber coordinates) is not globally well defined.

\subsection{An extension of Kontsevich's $L_\infty$ morphism}\label{subsection:extension}
In this section we show that the $\BV_\infty$ formality morphism from Theorem \ref{main theorem} can be obtained in such a way that it extends Kontsevich's original $L_\infty$ morphism \cite{Kontsevich}.

We have the following chain of maps:

\begin{equation}\label{full diagram}
\begin{tikzcd}[column sep=0.5em]
\hoLie\arrow{d}{} &\aol & \hoLie^{\text{bimod}}  \arrow{d}{} & \aor &\hoLie  \arrow{d}{} \\
\CBr_\infty \arrow{d}{} &\aol & \CBr_\infty^{\text{bimod}}  \arrow{d}{} & \aor &\CBr_\infty  \arrow{d}{} \\
\CBr \arrow{d}{} & \aol& Chains(\mathsf F \mathbb H_{\bullet, 0})\arrow{d}{}& \aor& Chains(\FFM_2)\arrow{d}{} \\
\CBr \arrow{d}& \aol  & \displaystyle\prod_n\BVKGra^{\mu}(\cdot,n)[-n] \arrow{d} &\aor & \BVGra \arrow{d}\\
\End \Dpoly &  \aol  &  \Hom(\Tpoly^{\otimes \bullet},\Dpoly)      &  \aor   & \End \Tpoly.\\
\end{tikzcd} 
\end{equation}

where $\hoLie = \Omega( \Lie\{1\}^{\vee})$, the first downwards maps are induced by the inclusion $\Lie\to \CBr$ and the other maps follow from the proof of Theorem \ref{main theorem}. Showing that our morphism extends Kontsevich's formality morphism amounts to showing that the full composition of the maps in \eqref{full diagram} gives Kontsevich's map. This is clear for the left column. For the other two columns the argument is similar so we will only prove it for the right column given that the notation is simpler.  Let us call $\mu_n$ the generator of $\Lie\{1\}^{\vee}(n)$.

%TODO : Actually I only need to check the middle column, no? 

 We recall that in \cite{Kontsevich}
 the construction of $U_n$, the $L_\infty$ components of the formality morphism are constructed by sending $\mu_n$ to the fundamental chain of $\mathbb H_{n,0}$. We wish then to show the commutativity of the following diagram, where the uppers horizontal maps represent Kontsevich's approach and $\Gra$ is the suboperad of $\BVGra$ in which tadpoles are not admitted.

\begin{equation}\label{extension}
\begin{tikzcd}
\hoLie \arrow{r}\arrow{d} & Chains(\FM_2)\arrow{r} & \Gra \arrow{r}& \End(\Tpoly).\\
\CBr_\infty \arrow{r} & Chains(\FFM_2) \arrow{r} & \BVGra \arrow{ru}
\end{tikzcd}
\end{equation}

As semi-algebraic manifolds, $\FFM_2(n) = \FM_2(n) \times (S^1)^{\times n}$, therefore there exists an inclusion map $i \colon \FM_2 \to \FFM_2$ that is the identity on the $\FM_2$ component and constant equal to the vertical direction in the $S^1$ components.

Naming the relevant maps, diagram \eqref{extension} becomes

\begin{equation}
\begin{tikzcd}
\hoLie \arrow{r}{f}\arrow{d}{i_L} & Chains(\FM_2)\arrow{r}\arrow{d}{i_*} & \Gra \arrow{r}\arrow[hook]{d}& \End(\Tpoly).\\
\CBr_\infty \arrow{r}{g} & Chains(\FFM_2) \arrow{r} & \BVGra \arrow{ru}
\end{tikzcd}
\end{equation}

It is clear that the right triangle diagram and the adjacent square diagram are commutative. To conclude the commutativity of the exterior diagram it is enough to show that the left square is commutative but this need not be the case. Fortunately this can be rectified if one is careful when constructing the map $g$ as a lift over quasi-isomorphisms. We sketch here the argument that is nothing but an adapted version or the argument of Lemmas 12 and 13 in \cite{operadic torsors}.

The fact that $\Lie\{1\}$ can be seen embedded in $\CBr$ via the map $F$ in section \ref{Def of CBr} implies that the generators $\mu_n$ of $\hoLie$ can be seen as part of the generators of $\CBr_\infty$ (via the map $i_L$) and
The map $f$ sends $\mu_n$ to the fundamental chain of $\FM_2(n))$. 

To construct $g$ one starts with a filtration $0=\mathcal F^0\subset \mathcal F^1\subset \dots \subset\CBr_\infty$ such that when differentiating the generators we fall in the previous degree of the filtration and then we construct the map recursively using the following diagram:

\begin{tikzcd}
\ & \ & F\arrow[twoheadrightarrow]{d}\arrow[twoheadrightarrow]{dr} & \\
\CBr_\infty \arrow[dotted]{rru}{g'}\arrow{r} & \CBr \arrow{r} & E & Chains(\FFM_2)\arrow{l}\\
\end{tikzcd}

where all maps are quasi-isomorphisms, $E$ is the operad through which the zig-zag connecting $\CBr$ and $Chains(\FFM_2)$ goes and $F$ is the operad resulting from the ``surjective trick'', i.e., an operad that surjects both onto $E$ and $Chains(\FFM_2)$ such that the depicted triangle commutes up to homotopy. At every stage we wish to map $\mu_n$ to a pre-image of the fundamental chain of $\FM_2$ (seen inside of $\FFM_2$) and essentially one has to check that $dg'(\mu_n) = g'(d\mu_n)$, but this follows from the fact that the boundary of the fundamental chain of $\FM_2(n)$ is computed the same way as the cocomposition of $\mu_n$ in $\Lie\{1\}^{\vee}$.

\subsection{The bimodule BVKGraphs}
Notice that due to the chain of morphisms \eqref{full diagram} there is a morphism of bimodules 

\begin{equation*}
\begin{tikzcd}[column sep=0.5em]				
\hoLie\arrow{d}{} &\aol & \hoLie^{\text{bimod}}  \arrow{d}{} & \aor &\hoLie  \arrow{d}{} \\
\CBr & \aol  &  \displaystyle\prod_n\BVKGra^\mu(\cdot, n)[-n] &\aor & \BVGra.\\
\end{tikzcd}
\end{equation*}

Using the formalism of twisting of bimodules described in the Appendix we can perform the bimodule twisting with respect to this morphism, thus obtaining the operadic bimodule $Tw\CBr  \aol   Tw \displaystyle\prod_n\BVKGra^\mu(\cdot, n)[-n] \aor Tw \BVGra$ .

The elements in $Tw \BVGra (n)$ can be seen as linear combinations of directed graphs with at least $n$ vertices, where from these, $n$ of them are labeled by numbers from $1$ to $n$  and the remaining ones are indistinguishable. The labeled vertices are called external vertices and the unlabeled ones are called internal vertices.
In a similar way, the elements of $ Tw \displaystyle\prod_n\BVKGra^\mu(m, n)[-n]$ consist of the same kind of graphs, but where now the type I vertices come in two flavors, the indistinguishable internal vertices and the $m$ labeled external vertices.

\begin{prop-def}\label{prop-def:BVGraphs}
The operad $Tw \BVGra$ has a suboperad that we call $\BVGraphs$ spanned by graphs satisfying the following properties:

\begin{enumerate}
  \item There are no 1-valent internal vertices or 2-valent internal vertices with exactly one incoming and one outcoming edges;
	  \item There are no tadpoles on internal vertices.
\end{enumerate}
\end{prop-def}
\begin{proof}
It is clear that the operadic composition preserves each of the conditions imposed in $\BVGraphs$, therefore we only need to check that $\BVGraphs$ is preserved under the action of the differential.

The differential $d$ in $Tw \BVGra$ has the form $d=d_1+d_2$, where $d_1$ is defined by 
$d_1\Gamma = ($
\begin{tikzpicture}
\node (e1) at (0,0) [ext] {};
\node (e2) at (0.5,0) [int] {};

\draw [->] (e1)--(e2);
\end{tikzpicture}$+$
\begin{tikzpicture}
\node (e1) at (0,0) [ext] {};
\node (e2) at (0.5,0) [int] {};

\draw [->] (e2)--(e1);
\end{tikzpicture}
$)\circ \Gamma$
-$ \sum_i \pm\Gamma\circ_i($
\begin{tikzpicture}
\node (e1) at (0,0) [ext] {};
\node (e2) at (0.5,0) [int] {};

\draw [->] (e1)--(e2);
\end{tikzpicture}
$+$ 
\begin{tikzpicture}
\node (e1) at (0,0) [ext] {};
\node (e2) at (0.5,0) [int] {};

\draw [->] (e2)--(e1);
\end{tikzpicture}$)$, and $d_2$ acts by replacing every internal vertex by \begin{tikzpicture}
\node (e1) at (0,0) [int] {};
\node (e2) at (0.5,0) [int] {};

\draw [->] (e1)--(e2);
\end{tikzpicture}.

 This means that the differential acts by splitting internal vertices out of every vertex.

Splitting any vertex cannot create create tadpoles at a vertex, therefore property $(2)$ is preserved by the differential.

The $d_2$ component of the differential produces $1-$valent internal vertices when all incident edges are reconnected to only one of the internal vertices. Similarly, the second summand in $d_1$ produces a $1-$valent internal vertex whenever all incident edges are reconnected to the external vertex. All of these factors are canceled out by the first summand of the definition of $d_1$. 

The creation of internal vertices with exactly one incoming and one outcoming edges happens only when after taking the differential in one vertex, there is exactly one other vertex that connects to the split internal vertex. However this term will be canceled out when the differential is taken on this other vertex.
\end{proof}

Given an operad $\cP$ with a morphism from $\hoLie\to \cP$, there is a canonical projection $Tw \cP \to \cP$, as described in the Appendix.
We prove now a Lemma that will be useful to show that the operad morphism $Chains(\FFM_2) \to \BVGra$ factors through $\BVGraphs$.

\begin{lemma}\label{lemma: chains natively twistable}
$Chains(\FFM_2)$ is natively twistable.
\end{lemma}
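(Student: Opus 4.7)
The plan is to exhibit a morphism of dg operads $\Phi \colon Tw\, Chains(\FFM_2) \to Chains(\FFM_2)$ that splits the canonical inclusion $Chains(\FFM_2) \hookrightarrow Tw\, Chains(\FFM_2)$ coming from the subspace with no internal vertices. The construction uses the forgetful projections $\pi_{n,k} \colon \FFM_2(n+k) \to \FFM_2(n)$ that drop the positions and framings of the last $k$ points; these are proper semi-algebraic maps, so the functor of semi-algebraic chains cited earlier in the excerpt produces a pushforward $(\pi_{n,k})_* \colon Chains(\FFM_2(n+k)) \to Chains(\FFM_2(n))$ of the appropriate degree. Given a representative $c \in Chains(\FFM_2(n+k))$ of an element of $Tw\, Chains(\FFM_2)(n)$, I would set $\Phi(c) := (\pi_{n,k})_* c$, assembled into a symmetrized sum over the $k$ internal labels.

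First I would verify that $\Phi$ is a well-defined map of symmetric collections: the $\mathbb S_k$-invariance required in the twisted operad is automatic because the projection is invariant under permuting the internal points, and the degree conventions work out since each forgotten point contributes a fiber of the correct dimension matching the $[-2]$ shift in the twisting construction (once the grading of $\hoLie$ is taken into account). Next I would check compatibility with the operadic composition $\circ_i$: composition in $\FFM_2$ is insertion at a boundary stratum, and internal points not lying in the inserted configuration can be forgotten independently, so the two orders of composing and forgetting commute up to the standard identifications of boundary strata in the Fulton-MacPherson compactification.

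The substantive step is to show $\Phi$ is a chain map. The differential on $Tw\, Chains(\FFM_2)$ decomposes as $d_0 + d_{\mathrm{tw}}$, where $d_0$ is the semi-algebraic chain boundary in $Chains(\FFM_2(n+k))$, and $d_{\mathrm{tw}}$ is induced by bracketing with the image of the Maurer-Cartan element $\mu \in \hoLie$, which under the structural map $\hoLie \to Chains(\FFM_2)$ is represented by the fundamental chain of $\FFM_2(2)$. I would invoke the Stokes-type identity $\partial(\pi_{n,k})_* c = (\pi_{n,k})_*\partial c \pm (\pi_{n,k}^{\,f\partial})_* c$ (used already in the proof that $p^{-1}$ is a chain map), where the second term accounts for the boundary along the fibers. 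The fiber boundary strata correspond exactly to events where an internal point either collides with another point (external or internal) or escapes to the boundary of the Poincar\'e disk, and these strata are, by the operadic structure of $\FFM_2$, canonically identified with the terms produced by bracketing against $\mu$.

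The hard part will be the precise matching of the twist differential with the fiber-boundary contributions, with all signs and symmetry factors. One must show that the enumeration of ways to bracket with $\mu$ (internal-to-internal and internal-to-external collisions summed over the internal labels) coincides bijectively and with matching signs with the stratification of $\partial^{f}\FFM_2(n+k)$ with respect to $\pi_{n,k}$. Once this bookkeeping is carried out, the identity $\Phi \circ d = d \circ \Phi$ follows, $\Phi$ restricted to the $k=0$ summand is the identity, and hence $Chains(\FFM_2)$ is natively twistable.
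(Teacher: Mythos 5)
Your construction goes in the wrong direction. In this paper ``natively twistable'' means (see the Appendix) that there is an operad morphism $\iota\colon \cP\to Tw\,\cP$ that is a right inverse to the canonical projection $Tw\,\cP\to\cP$, i.e.\ a \emph{section} of the projection. You instead propose a retraction $\Phi\colon Tw\,Chains(\FFM_2)\to Chains(\FFM_2)$ of ``the canonical inclusion.'' That inclusion of the $k=0$ summand is not even a map of dg operads (the twisted differential splits internal vertices off external ones, so it does not preserve the $k=0$ part), and in any case a retraction is not the datum required by the definition, nor the one used downstream: the lemma is invoked to factor $Chains(\FFM_2)\to\BVGra$ as $Chains(\FFM_2)\xrightarrow{\iota} Tw\,Chains(\FFM_2)\to Tw\,\BVGra\to\BVGra$ and then to land in $\BVGraphs$, which requires a map \emph{into} the twisted operad.

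There are also technical problems with the proposed $\Phi$ on its own terms. The pushforward $(\pi_{n,k})_*$ along a semi-algebraic map preserves the degree of a chain (and kills chains whose image drops dimension), so it cannot account for the shift $\K[-2k]$ in $Tw\,Chains(\FFM_2)(n)$; the claim that ``each forgotten point contributes a fiber of the correct dimension'' conflates pushforward with fiber integration or taking preimages. The paper's construction goes the other way: one uses the strongly continuous (preimage) chain $\pi^{-1}_{k,n}\colon Chains(\FFM_2(n))\to Chains(\FFM_2^k(n+k))\subset Chains(\FFM_2(n+k))$, where the $k$ added points vary freely but with frames \emph{fixed pointing upwards}. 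This restriction is essential: a freely framed point would contribute a $3$-dimensional fiber, while the twist shifts degree by $2$ per internal point. The resulting $\iota(c)$, with $k$-th component $\pi^{-1}_{k,n}(c)$, lands in the $\mathbb S_k$-invariants, is by construction a section of the projection, and is an operad map because distributing $k$ freely moving points relative to a boundary stratum amounts to choosing how many of them lie inside it. Your Stokes/fiber-boundary analysis is the right idea for verifying that such a section is compatible with the twisted differential, but it must be applied to $\pi^{-1}$, not to a pushforward.
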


\begin{proof}
We need to construct an operad map $\iota\colon Chains(\FFM_2) \to Tw \ Chains(\FFM_2)$ that is a right inverse to the canonical projection.

Let $\FFM_2^k(n+k)$ be the subspace of $\FFM_2(n+k)$ whose last $k$ points have their frame constantly pointing upwards.

The bundle maps $\pi_{n,k}\colon \FFM_2^k(n+k)\to \FFM_2(n)$ defined by ``forgetting" the last $k$ points define a map at the level of chains 
$$\pi^{-1}_{k,n} \colon Chains(\FFM_2(n)) \to Chains(\FFM_2^k(n+k)) \subset Chains(\FFM_2(n+k)).$$ 

Notice that these map lands in the $\mathbb S_k$ invariant subspace $Chains(\FFM_2(n+k))^{\mathbb S_k}$.

Let $c\in Chains(\FFM_2)(n)$. To define $\iota(c)$ it is enough to define its projection in  $Chains(\FFM_2)(n+k)^{\mathbb S_k}$. We define this projection to be $\pi^{-1}_{k,n}$.

To see that this is an operad map, we need to check that $\iota(c\circ_i c') = \iota (c) \circ_i \iota(c')$. This equality follows from the observation that fixed a boundary stratum of a configuration of points, having $k$ points varying freely is the same as $i$ points inside that boundary stratum and $k-i$ outside, for $i=0,\dots,k$.
\end{proof}

 The operad morphism $Chains(\FFM_2)\to \BVGra$ and the functoriality of $Tw$ and the canonical projections $Tw \cP \to \cP$ give us the following commutative square

$$\begin{tikzcd}
Tw \ Chains(\FFM_2) \arrow{d} \arrow{r} & Tw\BVGra \arrow{d}\\
Chains(\FFM_2) \arrow{r}& \BVGra
\end{tikzcd} 
$$

As a corollary of the previous Lemma, the operad morphism $Chains(\FFM_2) \to \BVGra$ factors as $Chains(\FFM_2) \to Tw \BVGra\to \BVGra$. Explicitly, the first map is given by

\begin{equation}\label{map:chains to bvgraphs}
c \in    Chains(\FFM_2)(n) \mapsto \sum_\Gamma \Gamma \int_{\pi_{\Gamma}^{-1}(c)}f_1(\Gamma),
\end{equation}
where $f_1(\Gamma)$ is the form associated to the graph $\Gamma$, as defined in Section \ref{subsec: chains to graphs} and for $\Gamma$ a graph with $n$ external and $m$ internal vertices, $\pi^{-1}_{\Gamma}(c)$ is the chain in $Chains(\FFM_2)(n+m)$ in which the $m$ points corresponding to the internal vertices vary freely in $\mathbb R^d$ while their frame is constantly pointing upwards.

\begin{proposition}\label{lemma:factors through BVGraphs}
The operad morphism $Chains(\FFM_2) \to \BVGra$ defined above factors through $\BVGraphs$.
\end{proposition}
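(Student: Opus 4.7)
My plan is to show that every term in the sum \eqref{map:chains to bvgraphs} indexed by a graph $\Gamma$ failing condition $(1)$ or $(2)$ of Proposition/Definition \ref{prop-def:BVGraphs} has vanishing coefficient $\int_{\pi_\Gamma^{-1}(c)}f_1(\Gamma)=0$, so that the formula lands in $\BVGraphs \subset Tw\BVGra$. I would handle the three forbidden configurations separately.

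First, a tadpole at an internal vertex $v$ contributes a factor $d\phi^{v,v}$, which is the differential of the angle between the frame at $v$ and the vertical direction. Since by construction of $\iota$ in Lemma \ref{lemma: chains natively twistable} the chain $\pi_\Gamma^{-1}(c)$ places every internal vertex with frame pointing upwards, $\phi^{v,v} \equiv 0$ on this chain; hence $d\phi^{v,v} = 0$ and the integrand vanishes pointwise.

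Next, for a $1$-valent internal vertex $v$ with unique neighbor $w$, the form $f_1(\Gamma)$ contains exactly one factor depending on $v$'s coordinates---a single $d\phi^{v,w}$ or $d\phi^{w,v}$---while $v$'s position contributes a $2$-dimensional factor to the fiber of $\pi_\Gamma$. By Fubini, the integral factors through the pushforward of a $1$-form along a $2$-dimensional fiber, which vanishes for degree reasons (the pushforward would have negative degree).

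Finally, for a $2$-valent internal vertex $v$ with an incoming edge from $a$ and an outgoing edge to $b$, I would use the orientation-preserving involution $\iota \colon v \mapsto a+b-v$ on $v$'s position (with all other vertex positions held fixed). A direct computation, using that $\iota(v)-a = b-v$ and $b - \iota(v) = v-a$, gives $\phi^{a,\iota(v)} = \phi^{v,b}$ and $\phi^{\iota(v),b} = \phi^{a,v}$, so the only factors depending on $v$, namely $d\phi^{a,v} \wedge d\phi^{v,b}$, get negated under $\iota$; since $\iota$ preserves orientation, the fiber integral over $v$ equals its own negative and vanishes. In the degenerate case $a=b$ the wedge vanishes pointwise because $\phi^{v,a} = \phi^{a,v} + \pi$ implies $d\phi^{v,a} = d\phi^{a,v}$. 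The main subtlety is that $a$ or $b$ may itself be an internal vertex being integrated; this is handled by Fubini, fixing the other internal vertex positions, integrating $v$ first on each slice (where the involution $\iota$ is well-defined and orientation-preserving), and noting that the inner integral vanishes uniformly.
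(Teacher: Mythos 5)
Your proof is correct, and for the tadpole and $1$-valent cases it coincides with the paper's argument (constant frames on internal vertices kill $d\phi^{v,v}$; a $1$-form pushed forward along a $2$-dimensional fiber vanishes for degree reasons). Where you genuinely diverge is the $2$-valent case. The paper proves the vanishing of $\int_{X_{z_a,z_b}} d\phi_{ai}\,d\phi_{ib}$ by a Stokes-type argument: it introduces an auxiliary configuration space $Y_{z_a,z_b}$ with one extra free point $j$, integrates the closed form $d\phi_{ai}\,d\phi_{ij}\,d\phi_{jb}$ over it, and identifies the surviving boundary strata (the collapses $a{=}i$, $i{=}j$, $j{=}b$) with signed multiples of the original integral, forcing it to vanish. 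You instead use the orientation-preserving point reflection $v\mapsto a+b-v$, under which the two angle forms are exchanged and their wedge is negated. Your route is shorter and more elementary, and it is valid here precisely because the forms $f_1$ on $\FFM_2$ are the translation-invariant Euclidean angles with the vertical, so the reflection pulls $d\phi^{a,v}$ back to $d\phi^{v,b}$ exactly; the issues you flag (Fubini over the other internal vertices, the reflection not preserving the punctures at the remaining marked points) are indeed harmless since the $2$-form $d\phi^{a,v}\wedge d\phi^{v,b}$ is absolutely integrable on $\mathbb R^2$, and your treatment of the degenerate case $a=b$ is a point the paper glosses over. What the paper's approach buys in exchange is portability: the identical Stokes argument is reused verbatim for the companion claim that $Chains(\mathbb H_{\bullet,0})\to \BVKGraphs$ factors correctly, where the angles are hyperbolic (measured against the geodesic to $\infty$), and there your involution neither preserves the upper half-plane nor exchanges the two angle forms. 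So your proof fully establishes the stated proposition, but, unlike the paper's, it does not transfer to the half-plane analogue invoked immediately afterwards.
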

\begin{proof}
It is enough to check that the morphism \eqref{map:chains to bvgraphs} lands in $\BVGraphs$ and for this one must check that the coefficient of the graphs that are ``forbidden'' in $\BVGraphs$ is zero. This is clear if the graph contains a 1-valent internal vertex, since the computation of the coefficient involves an integral of a 1-form (corresponding to the incident edge) over a 2 dimensional space.\\

Suppose the graph $\Gamma$ contains an internal vertex with exactly one incoming and one outcoming edges. Let us call this vertex $i$ and let us also call $a$ and $b$ the vertices to which these two edges connect.
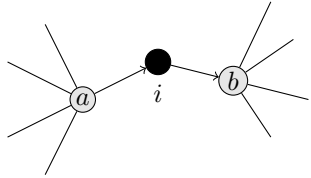
\begin{figure}[h]

\begin{tikzpicture}

\node[label=below:$i$] (int) at (0,0) [int]{$a$};

\node[fill=gray!20] (a) at (-1,-0.5) [ext] {$a$};

\node[fill=gray!20] (b) at (1,-0.25) [ext] {$b$};

\draw [->] (a)--(int);
\draw [->](int)--(b);

\draw (a) -- (-1.5,0.5);
\draw (a) -- (-2,0);
\draw (a) -- (-2,-1);
\draw (a) -- (-1.5,-1.5);
\draw (b) -- (1.5,0.8);
\draw (b) -- (1.8,0.3);
\draw (b) -- (2,-0.5);
\draw (b) -- (1.5,-1);
\end{tikzpicture}
\caption{An internal vertex connected to two (internal or external) vertices.}
\end{figure}

 By Fubini's Theorem for fibrations, the integral $\int_{\pi_{\Gamma}^{-1}(c)}f_1(\Gamma)$ can be rewritten as
 
 $$\int \left(\int_{X_{z_a,z_b}} d\phi_{ai}d\phi_{ib} \right)\dots,  $$ 
 where $X_{z_a,z_b}$ is the space of configurations in which the points labeled by $a$ and $b$ are in positions $z_a$ and $z_b$, and the point labeled by $i$ moves freely. It suffices therefore to show that the integral 
\begin{equation}\label{integral} 
 \int_{ X_{z_a,z_b}} d\phi_{ai}d\phi_{ib}
\end{equation} 
  vanishes. To check this, notice that by (the fibration integral version of) Stokes Theorem, we have
 
$$d \underbrace{\int_{Y_{z_a,z_b}} d\phi_{ai}d\phi_{ij}d\phi_{jb}}_{0} = \int_{Y_{z_a,z_b}} \underbrace{d(d\phi_{ai}d\phi_{ij}d\phi_{jb})}_{0} \pm \int_{\partial Y_{z_a,z_b}} d\phi_{ai}d\phi_{ij}d\phi_{jb},$$ 
 where $Y_{z_a,z_b}$ is the configuration space of four points ($i,j,a$ and $b$) where $a$ and $b$ are fixed at $z_a$ and $z_b$ and the points labeled by $i$ and $j$ are free. The integral on the left hand side vanishes by degree reasons. The boundary terms on the right hand side vanish except on the following cases:
 
 \begin{itemize}
 \item The boundary stratum in which $a$ and $i$ are infinitely close,
 
 \item The boundary stratum in which $i$ and $j$ are infinitely close,
 
 \item The boundary stratum in which $j$ and $b$ are infinitely close.
 \end{itemize}
In each of these cases, the result is an integral of the form of integral \eqref{integral} (possibly with different signs), therefore it is zero.\\

If a graph $\Gamma$ contains an internal vertex with a tadpole, the form $f_1(\Gamma)$ includes a term of the form $d\phi$, where $\phi$ is the angle between the vertical direction and the frame at the corresponding point. However $\pi_\Gamma^{-1}(c)$ is a chain in which the frame of that point does not vary, therefore the integral $\int_{\pi_{\Gamma}^{-1}(c)}f_1(\Gamma)$ vanishes.
\end{proof}

As a consequence of Lemma \ref{lemma:natively twistable}, the canonical projections $Tw \CBr \to \CBr$ and $Tw \BVGraphs \to \BVGraphs$ admit right inverses. This defines a $\CBr-\BVGraphs$ bimodule structure on   $Tw \displaystyle\prod_n\BVKGra^\mu(\cdot, n)[-n]$. Elements of this bimodule are (sequences of) graphs with type I and type II vertices as before, but now there are two kinds of type I vertices. Using the same designations as in $\CBr$ we refer to the labeled type I vertices as external vertices and the indistinguishable type I vertices as internal vertices.

\begin{prop-def}
The $\CBr-\BVGraphs$ bimodule   $Tw \displaystyle\prod_n\BVKGra^\mu(\cdot, n)[-n]$ has a subbimodule that we call $\BVKGraphs$ that is spanned by the graphs with the following properties:
\begin{enumerate}[(1)]
\item There is at least one type I external vertex,
\item There are no 0-valent type I internal vertices
\item There are no 1-valent type I internal vertices with an outgoing edge,
\item There are no 2-valent type I internal vertices with one incoming and one outgoing edge (in particular there are no internal vertices with one tadpole and no other incident edges).
\end{enumerate}
\end{prop-def}

\begin{proof}
We must check that $\BVKGraphs$ is preserved by the differential, the left $\CBr$ and right $\BVGraphs$ actions. This is clear for the right $\BVGraphs$ action.

To check that $\BVKGraphs$ is closed under the action on $\CBr$ we start by considering the action of the generator $T_n^1$. Let $\Gamma_1,\dots,\Gamma_n$ be graphs in $\BVKGraphs$. The element $T_n^1(\Gamma_1,\dots,\Gamma_n)$ is determined by inserting  $\Gamma_2,\dots,\Gamma_n$ at the type II vertices of $\Gamma_1$, therefore every type I vertex in $T_n^1(\Gamma_1,\dots,\Gamma_n)$ can be identified as coming from one of the $\Gamma_i$. Since there are only incoming edges at type II vertices, the action of $T_n^1$ can increase or maintain the number of incoming edges at a type I vertex but it can only maintain the number of outgoing edges at every type I vertex, thus proving that properties \textit{(2)}, \textit{(3)} and \textit{(4)} are preserved. Property \textit{(1)} is clearly preserved.

The action of $T_n^j$ is given by insertions of the $\Gamma_i$ in the type II vertices on cyclic permutations of $\Gamma_1$, using the cyclic action of $\BVKGra$ described in section \ref{BVKGra}. Since the cyclic action preserves properties \textit{(1)}-\textit{(4)}, $\BVKGraphs$ is closed under the action of $T_n^j$.

The insertion of the empty graph $\mathbbm 1 \in \BVKGra(0,0)$ on some type II vertex of another graph has two possible outcomes. Either there is an incoming edge and the insertion of $\mathbbm 1$ at that vertex is $0$ or there were no incoming edges and the insertion of $\mathbbm 1$ forgets the vertex. In both cases properties  \textit{(1)}-\textit{(4)} are preserved, therefore $\BVKGraphs$ is closed under the action of $T_n^{j,j+1}$.

To show that $\BVKGraphs$ is closed under the action of $T_n'^j$, it is enough to check that summands of the Maurer-Cartan element by which $\prod \BVKGra^\mu(\cdot, n)[-n]$ was twisted (image of the generators of $\hoLie^{\text{bimod}}$) satisfy the following two properties:

\begin{enumerate}[(a)]
\item The only graph containing a $1$-valent type I internal vertex is the 2 vertex graph \begin{tikzpicture}[scale=1]

\node (i) at (1,0.5) [int] {};
\node (j) at (1,0) [int] {};

\draw [->] (i)--(j);
\draw  (0.5,0)--(1.5,0);
\end{tikzpicture}, with coefficient $1$.
\item There are no graphs with vertices like the ones in property \textit{(4)}.
\end{enumerate}

To verify these properties we recall that the map  $\hoLie^{\text{bimod}} \to \prod \BVKGra^\mu(\cdot, n)[-n]$ involves at some step the integration of differential forms over $\mathsf F \mathbb H_{\bullet, 0}$. Then, property (a) follows from degree reasons and property (b) has a proof similar to Proposition \ref{lemma:factors through BVGraphs}.

It remains to check that the differential preserves $\BVKGraphs$.

The differential is composed of the following pieces:
\begin{itemize}
\item The original splitting of type II vertices,
\item Insertion of \begin{tikzpicture}
\node (e1) at (0,0) [ext] {};
\node (e2) at (0.5,0) [int] {};

\draw [->] (e1)--(e2);
\end{tikzpicture}$+$
\begin{tikzpicture}
\node (e1) at (0,0) [ext] {};
\node (e2) at (0.5,0) [int] {};

\draw [->] (e2)--(e1);
\end{tikzpicture} at type I external vertices,

\item Insertion of \begin{tikzpicture}
\node (e1) at (0,0) [int] {};
\node (e2) at (0.5,0) [int] {};

\draw [->] (e1)--(e2);
\end{tikzpicture}
 at type I internal vertices,

\item Bracket with the image of the generators of $\hoLie^{\text{bimod}}$.
\end{itemize}

The first piece of the differential clearly preserves $\BVKGraphs$. Properties \textit{(1)} and \textit{(2)} are trivially preserved by all pieces of the differential. It remains to check properties \textit{(3)} and \textit{(4)}. The remaining pieces of the differential can produce vertices like \textit{(3)} and \textit{(4)}, so we must verify that these graphs cancel. There are 3 possibilities to obtain a vertex of the kind \textit{(3)} with the differential: 

Using the second piece of the differential on a graph $\Gamma \in \BVKGraphs$, at every external vertex we get a forbidden 1-valent vertex connecting to it, corresponding to inserting \begin{tikzpicture}
\node (e1) at (0,0) [ext] {};
\node (e2) at (0.5,0) [int] {};

\draw [->] (e2)--(e1);
\end{tikzpicture} and reconnecting all the originally incident edges to the external vertex.
Similarly, for every internal vertex of $\Gamma$, the second piece of the differential produces one 1-valent internal vertex with one outgoing edge connecting to it.

Due to property (a), the only ``problematic" graphs that may arise from the fourth piece of the differential are coming from bracket with \begin{tikzpicture}[scale=1]

\node (i) at (1,0.5) [int] {};
\node (j) at (1,0) [int] {};

\draw [->] (i)--(j);
\draw  (0.5,0)--(1.5,0);
\end{tikzpicture}. The bracket with this element gives
 \begin{tikzpicture}[scale=1]
\node (i) at (1,0.5) [int] {};
\node (j) at (1,0)  {$\Gamma$};

%\node (k) at (2,0.25) {$- \sum_i \Gamma \circ_i $);

\node at (2,0.25) {$\pm\sum_i\Gamma \circ_{\overline i}$};

\draw [->] (i)--(j);

\node (i2) at (3.2,0.5) [int] {};
\node (j2) at (3.2,0) [int] {};

\draw [->] (i2)--(j2);
\draw  (2.7,0)--(3.7,0);
\end{tikzpicture}
 where on the first summand we connect the internal vertex to every possible (type I or II) vertex of $\Gamma$ and on the second summand the $\circ_i$ represents an insertion at the vertices of $\Gamma$ of type II. 
 In  \begin{tikzpicture}[scale=1]\node (i) at (1,0.5) [int] {};
\node (j) at (1,0)  {$\Gamma$};

\draw [->] (i)--(j);
\end{tikzpicture}, the edges connecting to type I vertices in $\Gamma$ are all canceled out with the second and third pieces of the differential as described above. The edges connecting to type II vertices are canceled by the terms in  \begin{tikzpicture}[scale=1]

\node at (2,0.25) {$\sum_i\Gamma \circ_{\overline i}$};

\node (i2) at (3.2,0.5) [int] {};
\node (j2) at (3.2,0) [int] {};

\draw [->] (i2)--(j2);
\draw  (2.7,0)--(3.7,0);
\end{tikzpicture} in which all the incident edges to ${\overline i}$ are reconnected to the type II vertex after the insertion.\\

To check that the differential preserves property \textit{(4)}, one can see that everytime an internal vertex having property \textit{(4)} is created due to type I internal or external vertex splitting, this is term is canceled by a splitting on the other adjacent vertex to the 2-valent vertex that was created. This also holds for splitting of vertices adjacent to type II vertices, but in that case the cancellation is done with a term coming from \begin{tikzpicture}[scale=1]

\node at (2,0.25) {$\sum_i\Gamma \circ_{\overline i}$};

\node (i2) at (3.2,0.5) [int] {};
\node (j2) at (3.2,0) [int] {};

\draw [->] (i2)--(j2);
\draw  (2.7,0)--(3.7,0);
\end{tikzpicture}.  Due to property (b), no more forbidden graphs are produced by the fourth piece of the differential.
\end{proof}

\begin{lemma}
$Chains(\mathbb H_{\bullet,0})$ is natively twistable.
\end{lemma}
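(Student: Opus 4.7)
The plan is to mirror the proof of Lemma \ref{lemma: chains natively twistable} in the bimodule setting. I need to construct a bimodule morphism $\iota \colon Chains(\mathbb H_{\bullet,0}) \to Tw\, Chains(\mathbb H_{\bullet,0})$ that is a right inverse to the canonical projection $Tw\, Chains(\mathbb H_{\bullet,0}) \to Chains(\mathbb H_{\bullet,0})$.

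For each $m,k \geq 0$, let $\mathbb H^k_{m+k,0}$ be the subspace of $\mathbb H_{m+k,0}$ in which the last $k$ points are regarded as unlabeled (internal). The forgetful map $\pi_{m,k} \colon \mathbb H^k_{m+k,0} \to \mathbb H_{m,0}$ that drops these $k$ points is a semi-algebraic bundle, so it induces a strongly continuous chain-level map $\pi_{m,k}^{-1} \colon Chains(\mathbb H_{m,0}) \to Chains(\mathbb H_{m+k,0})^{\mathbb S_k}$ whose image lies in the $\mathbb S_k$-invariants. Define $\iota(c)$ for $c \in Chains(\mathbb H_{m,0})$ by prescribing that its component in $Chains(\mathbb H_{m+k,0})^{\mathbb S_k}$ be $\pi_{m,k}^{-1}(c)$. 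The canonical projection $Tw\, Chains(\mathbb H_{\bullet,0}) \to Chains(\mathbb H_{\bullet,0})$ reads off the $k=0$ component, so it is clear by construction that $\iota$ is a right inverse.

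The substantive content is to verify that $\iota$ commutes with both actions. Compatibility with the right $Chains(\FFM_2)$-action reduces to the same Fubini-type observation used in Lemma \ref{lemma: chains natively twistable}: for a fixed configuration with an insertion at a labeled point, having $k$ unlabeled points vary freely over the whole configuration is tantamount to summing over splittings $k = i + (k-i)$ that place $i$ points inside the inserted cell and $k-i$ outside. For the left $\CBr$-action, the generators $T_n^i$, $T_n^{i,i+1}$ are built from braces insertions at boundary strata, together with the cyclic action and the unit-insertion $\F$; but since the subscript is $0$, there are no boundary points for the cyclic action or $\F$ to affect, so only braces-type insertions at type II vertices remain and these are again handled by the Fubini identity. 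The twisted generators $T'^i_n$ coming from the twisting Maurer-Cartan element $\theta$ (the fundamental chains of $\mathsf F \mathbb H_{0,n}$) are handled by noting that $\theta$ itself is of the form $\pi^{-1}$ applied to a configuration of a single point.

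The main obstacle, as in the operadic case, is purely combinatorial bookkeeping: matching $\mathbb S_k$-quotients, planar arrangements of insertions, and the signs coming from the semi-algebraic chains functor, which is resolved by the same argument used previously. No new analytical input beyond what was needed for Lemma \ref{lemma: chains natively twistable} is required.
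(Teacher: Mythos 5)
Your proposal takes exactly the paper's route: the paper disposes of this lemma in one sentence by declaring the map identical to the one in Lemma \ref{lemma: chains natively twistable} (the fibrewise inverse of the forgetful bundle map, adding $k$ freely varying unlabeled points landing in the $\mathbb S_k$-invariants) and asserting that compatibility with the left and right actions is immediate. Your write-up is the same construction with the compatibility checks spelled out, so it is correct and essentially coincides with the paper's argument.
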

\
The construction of the map $Chains(\mathbb H_{\bullet,0})\to Tw \ Chains(\mathbb H_{\bullet,0})$ is identical to Lemma \ref{lemma: chains natively twistable} and the compatibility with the left and right actions is immediate.\\

As a consequence, the bimodule morphism $Chains(\mathbb H_{\bullet,0}) \to \BVKGra$ factors through $Tw \displaystyle\prod_n\BVKGra^\mu(\cdot, n)[-n]$. The explicit formula is given by

\begin{equation}\label{map:chains to bvkgraphs}
c \in    Chains(\mathbb H)(n) \mapsto \sum_\Gamma \Gamma \int_{\pi_{\Gamma}^{-1}(c)}f_2(\Gamma),
\end{equation}
where $f_2(\Gamma)$ is the form associated to the graph $\Gamma$, as defined in Section \ref{subsec: chains to graphs} and if $\Gamma$ is a graph with $n$ external and $m$ internal type I vertices and $k$ type II vertices, $\pi^{-1}_{\Gamma}(c)$ is the chain in $Chains(\mathbb H_{n+m,k})$ in which the $m$ points corresponding to the internal vertices vary freely in the upper half plane while their frame is constantly pointing upwards.

\begin{proposition}
The bimodule morphism $Chains(\mathbb H_{\bullet,0}) \to \BVKGra$ factors through $\BVKGraphs$.
\end{proposition}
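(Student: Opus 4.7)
The plan is to mirror the proof of Proposition \ref{lemma:factors through BVGraphs} using the explicit formula \eqref{map:chains to bvkgraphs}. It suffices to check that whenever a graph $\Gamma$ violates one of the defining properties $(1)$--$(4)$ of $\BVKGraphs$, the coefficient $\int_{\pi_\Gamma^{-1}(c)} f_2(\Gamma)$ vanishes.

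Property $(1)$ is built into the construction: $\pi_\Gamma^{-1}$ neither adds nor removes type I external vertices, so starting from $c \in Chains(\mathbb H_{n,0})$ with $n \geq 1$ the image contains only graphs with $n$ external type I vertices. For properties $(2)$ and $(3)$, I would apply Fubini to the fibration that forgets the free position of the offending type I internal vertex $i$, whose contribution to the fiber is the $2$-dimensional factor $\mathbb H$. The fiber integral extracts the top-degree component of $f_2(\Gamma)$ in the differentials of $i$'s coordinates. If $i$ is $0$-valent (property (2)), no $1$-form factor of $f_2(\Gamma)$ depends on $i$, so this component is at most a $0$-form on a $2$-dimensional fiber and vanishes. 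If $i$ has a single outgoing edge to some vertex $j$ (property (3)), the only $i$-dependent factor is $d\phi_{ij}/2\pi$, a $1$-form, so the component is at most a $1$-form on a $2$-dimensional fiber and again vanishes.

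Property $(4)$ splits into two cases. If the $2$-valent internal vertex carries a tadpole (the parenthetical case), then $f_2(\Gamma)$ contains the factor $d\phi^{i,i}/2\pi$, the differential of the angle between the vertical at $i$ and the frame at $i$; since $\pi_\Gamma^{-1}$ holds the frames at internal vertices constantly pointing upward, this angle is identically zero and the coefficient vanishes. In the non-tadpole case ($i$ internal with one incoming edge from $a$ and one outgoing to $b$, with $a, b \neq i$), I apply Fubini to isolate the factor
\[
  \int_{X_{z_a,z_b}} d\phi_{ai}\wedge d\phi_{ib}
\]
over the fiber $X_{z_a,z_b}$ of free positions of $i$ with $a, b$ fixed at $z_a, z_b$. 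Following the Stokes argument of Proposition \ref{lemma:factors through BVGraphs}, I introduce an auxiliary free point $j$ and apply Stokes to the $3$-form $d\phi_{ai}\wedge d\phi_{ij}\wedge d\phi_{jb}$ on the compactified $4$-point configuration $Y_{z_a,z_b}$: the interior contribution vanishes by degree, and the three non-trivial boundary strata (collisions $a$-$i$, $i$-$j$, $j$-$b$) each reduce to the integral above, forcing it to be zero.

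The main obstacle I anticipate is the careful bookkeeping of the fibration structure and the compactified boundary strata of $\mathsf F \mathbb H_{\bullet,\bullet}$ in the hyperbolic, two-colored setting. Since $d\phi$ is a conformally invariant $1$-form and the construction of $\mathsf F \mathbb H$ parallels that of $\FFM_2$, no essentially new phenomenon should arise relative to Proposition \ref{lemma:factors through BVGraphs}, but the precise identification of fibers over the base chain $c$ and the vanishing of irrelevant boundary pieces at infinity and at collisions with the real line must be verified directly.
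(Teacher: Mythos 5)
Your proof is correct and takes essentially the same route as the paper, whose own proof of this proposition is just the statement that it is ``essentially the same'' as that of Proposition~\ref{lemma:factors through BVGraphs}; you adapt the same three vanishing mechanisms (degree count over the two-dimensional fiber of a freely moving internal vertex for properties (2)--(3), constancy of the frame for the tadpole case, and the Stokes argument with an auxiliary point for the two-valent in-out case) to the half-plane setting. The boundary-strata bookkeeping along the real line that you flag is exactly the routine verification the paper leaves implicit.
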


The proof is essentially the same as the one of Proposition \ref{lemma:factors through BVGraphs}.

\subsection{The Twist}

As a consequence of the previous section we have the following map of bimodules representing the last layer of the formality morphism:

\begin{equation*}
\begin{tikzcd}[column sep=0.5em]				
\CBr \arrow{d}& \aol  & \BVKGraphs \arrow{d} &\aor & \BVGraphs \arrow{d}\\
\End \Dpoly(\Rformal) &  \aol  &  \Hom(\Tpoly^{\otimes \bullet}(\Rformal),\Dpoly(\Rformal))      &  \aor   & \End \Tpoly(\Rformal).\\
\end{tikzcd}
\end{equation*}

As described in section \ref{subsection:idea}, the fibers of the vector bundles $\mathcal T_\text{poly}$ and $\mathcal D_\text{poly}$ are isomorphic to $\Tpoly(\Rformal)$ and $\Dpoly(\Rformal)$, therefore this induces the following map of bimodules:

\begin{equation*}
\begin{tikzcd}[column sep=0.5em]				
\CBr \arrow{d}& \aol  & \BVKGraphs \arrow{d} &\aor & \BVGraphs \arrow{d}\\
\End \Omega(\mathcal D_\text{poly}) &  \aol  &  \Hom(\Omega(\mathcal T_\text{poly})^{\otimes \bullet},\Omega(\mathcal D_\text{poly}))      &  \aor   & \End\Omega(\mathcal T_\text{poly}).\\
\end{tikzcd}
\end{equation*}

Since the $\CBr_\infty$ formality morphism from Theorem \ref{main theorem} is an extension of Kontsevich's $L_\infty$ formality morphism (see section \ref{subsection:extension}), its $L_\infty$ part satisfies properties P1)-P5) from  section 7 in \cite{Kontsevich}. In particular, property P4) implies that for $n\geq 2$, $U_n(B,\dots,B)=0$ and thus $B' = \sum_{n=1}^\infty \frac{1}{n!}U_n(B,\dots,B) = U_1(B) = B$, under the identification $\Omega^1(\mathcal T^1_\text{poly}) = \Omega^1(\mathcal D^1_\text{poly})$.

On the other hand, the bimodule $\BVKGraphs$ is obtained from a twist therefore it is natively twistable. 

Therefore, following the Appendix, we obtain a map of bimodules:

\begin{equation*}
\begin{tikzcd}[column sep=0.5em]				
\CBr \arrow{d}& \aol  & \BVKGraphs \arrow{d} &\aor & \BVGraphs \arrow{d}\\
\End \Omega(\mathcal D_\text{poly})^B &  \aol  &  \Hom\left(\left(\Omega(\mathcal T_\text{poly})^B\right)^{\otimes \bullet},\Omega(\mathcal D_\text{poly})^B)\right)      &  \aor   & \End\Omega(\mathcal T_\text{poly})^B,\\
\end{tikzcd}
\end{equation*}

where the superscript $B$ indicates that we are considering the twisted differential. For this twist it is important that $\BVKGraphs$ forbids $1$-valent internal vertices with an outgoing edge and $2$-valent internal vertices with one incoming and one outgoing edges, since the linear part of $B$ is not globally well-defined.

Composing with this map with bimodule maps $\CBr^{\text{bimod}}_\infty \to Chains(\mathbb H_{\bullet,0}) \to \BVKGraphs$, we obtain the desired global $\CBr_\infty$ quasi-isomorphism.

\appendix

\section{Twisting}
In this Appendix we give an overview on the theory of operadic twisting following \cite{Operadic Twisting} that we need for this paper and we define a notion of twisting of operadic bimodules, which is not more than an adaptation of the same theory. We advise the reader to read the third section of loc. cit. if they are not familiar with twisting of operads.

We make the standard assumptions used in the context of twisting with respect to Maurer-Cartan elements. Namely, all algebras $\mathfrak g$ (over $\hoLie$ or another operad) are equipped with complete decreasing filtrations 
$\mathfrak g = F_0 \mathfrak g \supset F_1 \mathfrak g \supset \dots$, such that the operations are compatible with the filtration and $\mathfrak g =  \displaystyle \varprojlim_i \mathfrak g/F_i\mathfrak g.$ These assumptions are made so that infinite sums (going deeper in the filtration) are allowed.

Let $\mathfrak g$ is a $\hoLie$ algebra, an element $\mu\in F_1 \mathfrak g$ of degree $2$ is said to be \textit{Maurer-Cartan} element of $\mathfrak g$ if it satisfies the Maurer-Cartan equation:
$$d\mu + \sum_{n=2}^\infty \frac{1}{n!}l_n(\mu,\dots,\mu) = 0,$$
where $l_n$ are the generating operations in $\hoLie$.

Given such a Maurer-Cartan element, one can construct the twisted $\hoLie$ algebra $\mathfrak g^\mu$, that is as a graded vector space just $\mathfrak g$, but with a changed (called twisted) differential, denoted by $d^\mu$, that is defined by $d^\mu(x) = dx + \sum_{n=1}^\infty  \frac{1}{n!}l_{n+1}(\mu,\dots,\mu,x)$, and new brackets given by $l^\mu_n(x_1,\dots,x_k) = \sum_{n=1}^\infty  \frac{1}{n!}l_{n+k}(\mu,\dots,\mu,x_1,\dots,x_k)$.

\subsection{Twisting of operads}
Let $\cP$ be an operad and let us assume the existence of an operad morphism $F\colon\hoLie\to \cP$. If $\mathfrak g$ is a $\cP$ algebra, thanks to $F$. Therefore it makes sense to talk about Maurer-Cartan elements of $\mathfrak g$. If $\mu$ be a Maurer-Cartan element of $\mathfrak g$, the twisted algebra $\mathfrak g^\mu$ is no longer necessarily a $\cP$ algebra. It is, however, an algebra over the operad $Tw \cP$, whose construction depends on the map $F$.

As an $\mathbb S$-module, we have $$Tw \cP(p) = \prod_{r\geq 0} \left(\cP(r+p) \otimes \mathbb K[-2r]\right)^{\mathbb S_r},$$
where $\mathbb S_r$ here is the subgroup of $\mathbb S_{r+p}$ fixing the last $p$ entries. The $r$ non-symmetric inputs should be thought as representing the insertion of $r$ Maurer-Cartan elements. The composition is defined using the original composition in $\cP$, but summing over suffles to ensure that it lands in the invariants over the action of $\mathbb S_{r_1+r_2}$. 

To describe the differential we need an auxiliary dg Lie algebra:
$$\mathcal L_\cP := \text{Conv}(\Lie\{1\}^{\vee}, \cP) = \prod_{n\geq 1} \cP(n)^{\mathbb S_n}[2-2n].$$
The Lie algebra $\mathcal L_\cP$ acts on $Tw \cP$, by composition on the non-symmetric inputs. $Tw \cP(1)$ acts on $Tw \cP$ by inner derivations.

There is an obvious degree zero map $\kappa\colon \mathcal L _\cP \to \mathcal Tw \cP(1)$.

The map $F$ induces a Maurer-Cartan element $\tilde{F}$, and the final differential is $d_{Tw} = d_\cP + d_{\tilde F} + d_{\kappa( \tilde F)}$, where the first piece is induced by the original differential in $\cP$, the second one comes from the $\mathcal \mathcal L_P$ action and the third one comes from the $Tw \cP(1)$ action.

The fact that this is a differential is essentially a consequence of the following Proposition:

\begin{proposition}\cite[Prop. 3.3]{Operadic Twisting}\label{lem: map to semidirect product}
The map 
\begin{align*}
&\mathcal L_\cP &\to &\mathcal L_\cP \ltimes Tw \cP(1)&\\
& v &\mapsto &v + \kappa(v)&
\end{align*}
 is a morphism of Lie algebras.
\end{proposition}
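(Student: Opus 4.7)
The proposition asserts that $\phi(v) := v + \kappa(v)$ preserves Lie brackets. Expanding the bracket in the semidirect product $\mathcal{L}_\cP \ltimes Tw\cP(1)$ yields
$$[\phi(v_1),\phi(v_2)] = [v_1,v_2]_{\mathcal{L}_\cP} + \bigl(v_1\cdot\kappa(v_2) - (-1)^{|v_1||v_2|}v_2\cdot\kappa(v_1) + [\kappa(v_1),\kappa(v_2)]_{Tw\cP(1)}\bigr),$$
whereas $\phi([v_1,v_2]_{\mathcal{L}_\cP}) = [v_1,v_2]_{\mathcal{L}_\cP} + \kappa([v_1,v_2]_{\mathcal{L}_\cP})$. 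The $\mathcal{L}_\cP$-components match tautologically, so the claim reduces to the single identity
$$\kappa([v_1,v_2]_{\mathcal{L}_\cP}) = v_1\cdot\kappa(v_2) - (-1)^{|v_1||v_2|}v_2\cdot\kappa(v_1) + [\kappa(v_1),\kappa(v_2)]_{Tw\cP(1)}$$
to be verified for all homogeneous $v_1, v_2 \in \mathcal{L}_\cP$.

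My approach is direct verification by expressing each term as a signed sum of partial compositions in $\cP$. Writing $v_i \in \cP(n_i)^{\mathbb{S}_{n_i}}$, the convolution bracket $[v_1,v_2]_{\mathcal{L}_\cP}$ is a signed sum over all partial compositions $v_1\circ_i v_2$ and $v_2\circ_j v_1$, with combinatorial weights prescribed by the $\Lie\{1\}^\vee$ coproduct. Applying $\kappa$ reinterprets each resulting element of $\cP(n_1+n_2-1)$ as an element of $Tw\cP(1)$: one of the $n_1+n_2-1$ inputs is designated the regular slot, the other $n_1+n_2-2$ are symmetrized as Maurer--Cartan slots. I would then partition the sum on the left-hand side according to the provenance of that distinguished slot. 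When the regular slot lies inside the inner operation $v_b$, the arrangement is precisely the operadic composition of $\kappa(v_b)$ into the regular slot of $\kappa(v_a)$, producing the bracket $[\kappa(v_1),\kappa(v_2)]_{Tw\cP(1)}$. When instead the regular slot lies in the outer piece $v_a$ away from the insertion site, the arrangement is an insertion of $v_b$ into one of the MC slots of $\kappa(v_a)$, producing the action term $v_b\cdot\kappa(v_a)$. Summing over $(a,b)\in\{(1,2),(2,1)\}$ visits every insertion position exactly once.

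The main obstacle will be sign and symmetrization bookkeeping. The shifts $[2-2n_i]$ defining $\mathcal{L}_\cP$ induce Koszul signs which must reconcile with those prescribed by the semidirect-product formula and with the operadic Koszul signs in $[\kappa(v_1),\kappa(v_2)]_{Tw\cP(1)}$; simultaneously, the combinatorial coefficients coming from the $\Lie\{1\}^\vee$ coproduct must match the symmetrization factors built into the $\mathbb{S}_r$-invariants in the definition of $Tw\cP$. A clean way to organize this is to work on a basis of $\cP(n)^{\mathbb{S}_n}$, represent each partial composition graphically as a two-vertex tree with the regular slot explicitly marked, and verify the decomposition position-by-position; then the partition above becomes manifest and all signs collapse to the standard Koszul rule.
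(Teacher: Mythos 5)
The paper does not prove this statement at all: it is imported verbatim as \cite[Prop.~3.3]{Operadic Twisting} (Dolgushev--Willwacher), so there is no in-paper argument to compare against. Judged on its own, your plan identifies the correct content of the claim --- the $\mathcal L_\cP$-components match for free, and everything reduces to the single identity $\kappa([v_1,v_2]) = v_1\cdot\kappa(v_2) - (-1)^{|v_1||v_2|}v_2\cdot\kappa(v_1) + [\kappa(v_1),\kappa(v_2)]$ --- and your decomposition by the provenance of the distinguished (regular) slot is exactly the right mechanism: in $v_1\circ_i v_2$ the slot $i$ is consumed by the insertion, so the regular slot lies either among the slots of the inner factor (yielding composition into the regular slot of $\kappa$ of the outer factor, hence the commutator in $Tw\cP(1)$) or among the remaining slots of the outer factor (yielding insertion of the inner factor into a Maurer--Cartan slot, hence the action term). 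This is essentially the argument of the cited reference. The one caveat is that you have deferred precisely the part where such statements usually go wrong: you should pin down the normalization of $\kappa$ on $\cP(n)^{\mathbb S_n}$ (whether it sums over the $n$ choices of regular slot or picks a representative, and how that interacts with the $\mathbb S_r$-invariants and the $1/r!$ factors in $Tw\cP$), and check that the sign in the semidirect-product bracket matches the Koszul signs induced by the shifts $[2-2n]$ in $\mathcal L_\cP$; until that bookkeeping is actually carried out the proposal is a correct outline rather than a complete proof.
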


The action of $Tw \cP$ on $\mathfrak g^\mu$ is given by inserting Maurer-Cartan elements in the non-symmetric slots. Explicitly, let $p\in Tw \cP(n)$ and let $x_1,\dots, x_n\mathfrak g^{\mu}$. 
$$p(x_1,\dots,x_n) := \sum_{r=0}^\infty \frac 1 {r!}p_r(\mu,\dots,\mu, x_1,\dots,x_n),$$
where $p_r$ is the projection of $p$ in the factor $\left(\cP(r+n) \otimes \mathbb K[-2r]\right)^{\mathbb S_r}$.

There is a natural operad projection map $Tw \cP \to \cP$, sending $\prod_{r\geq 1} \left(\cP(r+n) \otimes \mathbb K[-2r]\right)^{\mathbb S_r}$ to zero. At the algebra level this tells us that not only twisted $\mathfrak g^{\mu}$ but the original $\mathfrak g$ are naturally $Tw \cP$ algebras.

On the other direction, an operad $\cP$  is said to be \textit{natively twistable} if there exists an operad morphism $\cP\to Tw \cP$ such that $\cP\to Tw \cP\to \cP$ is the identity. In this case, the twist of a $\cP$-algebra is still a $\cP$-algebra.

\begin{lemma}\cite[Lemma 16]{Note}  \label{lemma:natively twistable}
Let $\cP$ be an operad (with an implicit map $\hoLie\to \cP$. $Tw\cP$ is natively twistable.
\end{lemma}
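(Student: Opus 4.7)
My plan is to exhibit an explicit operad morphism $\Phi\colon Tw\cP \to Tw(Tw\cP)$ which is a section of the canonical projection $Tw(Tw\cP) \to Tw\cP$. The guiding intuition is that a $Tw(Tw\cP)$-algebra is a $\cP$-algebra equipped with two Maurer-Cartan elements $\mu,\mu'$ (one for the outer $Tw$ with respect to the induced map $\hoLie \to Tw\cP$, one for the inner $Tw$ with respect to $F\colon\hoLie\to\cP$), whereas a $Tw\cP$-algebra only has one MC element. The map $\Phi$ should encode the fact that twisting by $\mu+\mu'$ equals twisting by $\mu$ and then twisting by $\mu'$, so a $Tw\cP$-operation evaluated on $\mu+\mu'$ should reassemble via multinomial expansion into a $Tw(Tw\cP)$-operation acting on $(\mu,\mu')$.

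Concretely, write an element $p\in Tw\cP(n)$ as a sequence of components $p_r\in(\cP(r+n)\otimes\K[-2r])^{\mathbb S_r}$. An element of $Tw(Tw\cP)(n)=\prod_{s}(Tw\cP(s+n)\otimes \K[-2s])^{\mathbb S_s}$ is a doubly indexed family with terms in $(\cP(t+s+n)\otimes\K[-2t-2s])^{\mathbb S_t\times \mathbb S_s}$. I define
\[
\Phi(p)_{s,t} := p_{s+t},
\]
re-interpreting the $\mathbb S_{s+t}$-invariant element $p_{s+t}$ as $\mathbb S_s\times\mathbb S_t$-invariant by restriction. The composition $Tw\cP\to Tw(Tw\cP)\to Tw\cP$ projects onto the $s=0$ row, so it returns $\Phi(p)_{0,t}=p_t$, i.e. it is the identity. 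Compatibility with the symmetric groups is tautological, compatibility with the unit is clear, and compatibility with operadic composition reduces, component by component, to the composition formula in $Tw\cP$: summing over shuffles of MC slots in $Tw(Tw\cP)$ reorganises into the shuffle sum defining composition in $Tw\cP$, because choosing how to distribute the $s$-slots between the two factors being composed commutes with choosing how to distribute the $t$-slots.

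The real work is to check that $\Phi$ commutes with differentials. The differential on $Tw\cP$ is $d_\cP+d_{\tilde F}+d_{\kappa(\tilde F)}$ and the differential on $Tw(Tw\cP)$ is $d_{Tw\cP}+d_{\widetilde{F'}}+d_{\kappa'(\widetilde{F'})}$, where $F'\colon\hoLie\to Tw\cP$ is the induced map (sending the generator $l_n$ to $\sum_{r\ge 0}\frac{1}{r!}F(l_{n+r})$ viewed inside $Tw\cP(n)$). The pieces of $d_{Tw\cP}$ inside the outer differential already account for all MC insertions in the inner factor, so writing out $d_{Tw(Tw\cP)}\Phi(p)_{s,t}$ and $\Phi(d_{Tw\cP}p)_{s,t}$ and expanding $\widetilde{F'}$ in terms of $\tilde F$ yields, on each bidegree, sums of insertions of $F(l_{\bullet})$ at the $s+t$ distinguished slots of $p_{s+t}$; the distinction between "which slots count as outer MC slots of the new $Tw(Tw\cP)$-differential" and "which count as inner" is precisely washed out by the identification $\Phi(p)_{s,t}=p_{s+t}$ together with the multinomial identities coming from $\mathbb S_{s+t}$-invariance, using Proposition~\ref{lem: map to semidirect product} to handle the $\kappa$-terms.

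The main obstacle is therefore the bookkeeping in this last step, in particular the need to match the inner-derivation part $d_{\kappa'(\widetilde{F'})}$ on the $Tw(Tw\cP)$-side with the corresponding term on the $Tw\cP$-side plus insertions coming from $d_{\tilde F}$; this is precisely the content of Proposition~\ref{lem: map to semidirect product} applied one level up, and it is the reason one must pass through the semidirect product rather than trying to match the three summands of the differential individually.
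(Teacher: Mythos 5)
Your map $\Phi(p)_{s,t}=p_{s+t}$ is exactly the section the paper constructs (the paper identifies $Tw\,Tw\,\cP(n)$ with $\prod_{r}\prod_{r_1+r_2=r}\left(\cP(n+r)\otimes\K[-2r]\right)^{\mathbb S_{r_1}\times\mathbb S_{r_2}}$ and includes $p_r$ diagonally into all factors with $r_1+r_2=r$), and your verification sketch, including the appeal to Proposition~\ref{lem: map to semidirect product} for the $\kappa$-terms, matches the intended argument, which the paper itself only sketches and delegates to the cited reference. The proposal is correct and takes essentially the same approach.
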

Notice that $Tw\ Tw \cP(n) = \displaystyle\prod_{r_1,r_2\geq 0}\left(\left(\cP((n+r_1)+r_2) \otimes \mathbb K[-2r_1] \otimes \mathbb K[-2r_2]\right)^{\mathbb S_{r_1}}\right)^{\mathbb S_{r_2}}=$

$= \displaystyle\prod_{r_1,r_2\geq 0}\left(\cP(n+r_1+r_2) \otimes \mathbb K[-2(r_1+r_2)]\right)^{\mathbb S_{r_1}\times \mathbb S_{r_2}}= \displaystyle\prod_{r\geq 0}\prod_{r_1+r_2=r}\left(\cP(n+r) \otimes \mathbb K[-2r]\right)^{\mathbb S_{r_1}\times \mathbb S_{r_2}}$.

For $p\in Tw \cP(n)$, the map $Tw \cP \to Tw\ Tw \cP$ is defined by the inclusion of $p_r\in \left(\cP(n+r)  \otimes \mathbb K[-2r]\right)^{\mathbb S_{r}}$ in the factors of $Tw\ Tw \cP(n)$ in which $r_1+r_2 = r$ and zero if $r_1+r_2\ne r$.

\subsection{Twisting of bimodules}

Let $\mathfrak{g}$ and $\mathfrak{h}$ be $\hoLie$ algebras. Given an infinity morphism from $\mathfrak{g}$ to $\mathfrak{h} $, we define a \textit{Maurer-Cartan} element of this morphism to be a pair $(\mu,\mu')$, where  $\mu$ is a Maurer-Cartan element of $\mathfrak{g}$ and  $\mu'$ is  a Maurer-Cartan element of $\mathfrak{h}$ such that the $\hoLie$ morphism sends $\mu$ to $\mu'$ \footnote{Evidently for a fixed $\hoLie$ infinity morphism, $\mu$ determines a unique $\mu'$.}.

Let $\cP$ and $\cQ$ be (dg) operads and $\cM$ be a $\cP-\cQ$ operadic bimodule, that we assume to come with an implicit bimodule morphism $F\colon\hoLie^\text{bimod} \to \cM$.

\begin{equation*}
\begin{tikzcd}[column sep=0.5em]				
\hoLie\arrow{d}{F_\cP}& \aol  & \hoLie^\text{bimod}\arrow{d}{F} &\aor & \hoLie \arrow{d}{F_\cQ}\\
\cP & \aol  & \cM  &\aor & \cQ\\
\end{tikzcd}
\end{equation*}

Let $\mathfrak{g}$ be a $\cP$ algebra and let $\mathfrak{h}$ be a $\cQ$ algebra. Due to the map $F$, a morphism of bimodules

\begin{equation}\label{eq:non twisted action}
\begin{tikzcd}[column sep=0.5em]				
\cP \arrow{d}& \aol  & \cM \arrow{d} &\aor & \cQ \arrow{d}\\
\End \mathfrak{h} &  \aol  &  \Hom(\mathfrak{g}^{\otimes \bullet},\mathfrak{h})      &  \aor   & \End \mathfrak{g}\\
\end{tikzcd}
\end{equation}

 determines a $\hoLie$ infinity morphism from $\mathfrak{g}$ to $\mathfrak{h}$. We wish to construct a $Tw \cP-Tw \cQ$ bimodule $\cM$ such that for every  $(\mu,\mu')$, Maurer-Cartan element of this morphism , there is a natural map of bimodules 
 \begin{equation*}
\begin{tikzcd}[column sep=0.5em]				
Tw\cP \arrow{d}& \aol  & Tw\cM \arrow{d} &\aor & Tw\cQ \arrow{d}\\
\End \mathfrak{h}^{\mu'} &  \aol  &  \Hom({\mathfrak{g}^{\mu}}^{\otimes \bullet},\mathfrak{h}^{\mu'})      &  \aor   & \End \mathfrak{g}^{\mu}\\
\end{tikzcd}
\end{equation*}

We start by giving the description of $Tw \cM$ as an $\mathbb S$-module.

\begin{defi}
The $Tw \cP-Tw \cQ$ bimodule  $Tw \cM$ is the space
$$Tw \cM(n) = \prod_{r\geq 0} \left(\cM(r+n) \otimes \mathbb K[-2r]\right)^{\mathbb S_r},$$
with differential $d_{Tw}$, where $\mathbb S_r$ here is the subgroup of $\mathbb S_{r+n}$ fixing the last $n$ entries.
\end{defi}

We need now to clarify the left and right actions, as well as the differential.

Let $m\in Tw \cM(n)= \prod_{r\geq 0} \left(\cM(p+r) \otimes \mathbb K[-2r]\right)^{\mathbb S_r}$. We denote by $m_r$ it's projection in $\left(\cM(p+r) \otimes \mathbb K[-2r]\right)^{\mathbb S_r}$ and for $p\in Tw P$, $q\in Tw \cQ$ we use a similar notation $p_r, q_r$. 

The right $Tw \cQ$ action on $\cM$ is defined in the following way:
Let $m\in Tw \cM(n)$ and $q\in Tw \cQ(l)$. 
$$(m\circ_i q)_r := \sum_{p=0}^r \sum_{\sigma\in Sh_{p,r-p}} \gamma_{i,\sigma}(m_p,q_{r-p}),$$ 
where $Sh_{p,r-p}\subset \mathbb S_r$ are the $(p,r-p)$ shuffles $\gamma_{i,\sigma}$ is the composition given by the following tree

\begin{tikzpicture}

\node (star) at (0,2)  {};
\node (top) at (0,1.5)  {};

\node (e1) at (-3,0)  {$\sigma(1)$};
\node (e2) at (-2.4,0)  {$\dots$};
\node (e3) at (-1.8,0) {$\sigma(p)$};
\node (e4) at (-1,0)  {$r+1$};
\node (e5) at (-0.3,0)  {$\dots$};
\node (e6) at (0.6,0)  {$\scriptstyle{r+i-1}$};
\node (i) at (1.3,0) [ext] {};
\node (e7) at (1.7,0) {$\dots$};
\node (e8) at (2.3,0) {$\scriptstyle{r+n+l}$};

\node (l1) at (0,-1.5) {$\sigma(p+1)$};
\node (l2) at (0.9,-1.5) {$\dots$};
\node (l3) at (1.5,-1.5) {$\sigma(r)$};
\node (l4) at (2.3,-1.5) {$r+1$};
\node (l5) at (2.9,-1.5) {$\dots$};
\node (l6) at (3.7,-1.5) {$\scriptstyle{r+i-n+1}$};

\draw [-] (0,1.5)--(e1) (0,1.5)--(e3) (0,1.5)--(e4) (0,1.5)--(e6) (0,1.5)--(i) (0,1.5)--(e8);

\draw (0,1.5)--(star);

\draw (i) -- (l1)  (i) -- (l3)  (i) -- (l4)  (i) -- (l6);
\end{tikzpicture}

 We write $d_{Tw}= d_{\cM} + d_R + d_L$, where $d_\cM$ is the differential induced by the differential in $\cM$. 

The Lie Algebra $\mathcal L_\cQ$ acts on $(Tw \cM,d_\cM)$ by operadic derivations. The proof of this is the same as \cite[Proposition 3.2]{Operadic Twisting}.

The Lie Algebra $Tw \cQ(1)$ acts on the right on $Tw \cM$ by 
$$m \cdot q= \sum_{i=1}^n m\circ_i q,$$ 
where $m\in Tw \cM(n)$ and $q\in Tw \cQ$.

Multiplying by a minus sign, the previous right action becomes a left action, thus inducing  a dg Lie algebra action $\mathcal L_\cQ \ltimes Tw \cQ(1) \aol (Tw \cM, d_\cM)$. 

The map $F_\cQ\colon \hoLie \to \cQ$ gives us a Maurer-Cartan element in $\mathcal L_\cQ$. Due to Lemma \ref{lem: map to semidirect product} we can twist $(Tw \cM,d_\cM)$ with respect to this Maurer-Cartan element, giving us the module $(Tw \cM, d_\cM+d_R)$. 

There is an obvious left $\cP$ action on $(Tw \cM, d_\cM)$, using the original $\cP$ action on $\cM$. It is easy to see that $\cP$ also acts on $(Tw \cM, d_\cM + d_R)$. 

Indeed, the equation of compatibility with the differential
$$(d_\cM + d_R)(p\circ_i m) = d_\cP p \circ_i m +(-1)^{|p|} p \circ_i (d_\cM + d_R)m $$
is equivalent to $d_R(p\circ_i m) = (-1)^{|p|}p\circ_i d_Rm$, and the associativity axiom involving the left and right actions of an operadic bimodule, together with the fact that $d_R$ uses right compositions ensures that this equality holds for all $p\in Tw \cP$ and $m\in Tw \cM$.

The map $F\colon \hoLie^\text{bimod}\to \cM$ gives us a Maurer-Cartan element in $\prod_r \Hom_{\mathbb S_r} (\mathbb K[2r],\cM(r)) =\prod_r (\cM(r)\otimes \mathbb K[-2r])^{\mathbb S_r} = Tw \cM(0)$. Twisting with respect to this Maurer-Cartan element we obtain a left action of $Tw \cP$ on $(Tw\cM, (d_\cM+d_R) + d_L)$. 

Using a similar argument of compatibility with the differential, we see that $Tw \cQ$ acts on the right on $(Tw \cM, d_\cM + d_R + d_L)=(Tw \cM, d_{Tw})$. The associativity of the left $Tw \cP$ and right $Tw \cQ$ actions is clear and so we finished the construction of the bimodule $Tw \cM$.

\subsubsection{The action on $\Hom({\mathfrak g^\mu}^{\otimes \bullet}, \mathfrak h^{\mu'})$}
As described in the beginning of the section, we wish now to construct a map of bimodules
 \begin{equation}\label{eq:twisted action}
\begin{tikzcd}[column sep=0.5em]				
Tw\cP \arrow{d}& \aol  & Tw\cM \arrow{d} &\aor & Tw\cQ \arrow{d}\\
\End \mathfrak{h}^{\mu'} &  \aol  &  \Hom({\mathfrak{g}^{\mu}}^{\otimes \bullet},\mathfrak{h}^{\mu'})      &  \aor   & \End \mathfrak{g}^{\mu}\\
\end{tikzcd}
\end{equation}

The two outer maps are the maps induced by the usual twisting of operads. For the main map, informally we do the usual procedure of inserting the Maurer-Cartan element on the non-symmetric slots. Formally, if $m\in Tw\cM(n)$,

$$
m(x_1,\dots, x_n) = \sum_{r=0}^{\infty} \frac{1}{r!}m_r(\mu,\dots,\mu, x_1, \dots,x_n),\quad x_i\in \mathfrak g,$$
where we identify an element of $\cM$ (resp. $Tw \cM$) with its image in $\Hom(\mathfrak g, \mathfrak h)$ (resp. $\Hom(\mathfrak{g}^{\mu},\mathfrak{h}^{\mu'})$).

The only thing that remains to be checked is the commutativity of the left and right squares, as well as the compatibility with the differential of the central vertical map. Let as call $l_r^\cP$ the image of the $r$-ary generator of $\hoLie$ in $\cP$, and we define similarly $l_r^\cQ$ and $l_r^\cM$.

Due to the original bimodule morphism \eqref{eq:non twisted action}, the right square is trivially commutative and the commutativity of the left square is a simple consequence \eqref{eq:non twisted action} together with the hypothesis $\displaystyle\sum_r \frac{1}{r!} l^\cM_r(\mu,\dots,\mu)=\mu'$. Also, thanks to this equation, when we evaluate $d_Lm$ in $\Hom(g^\mu,h^{\mu'})$, $mu'$ will replace the Maurer-Cartan element of $Tw \cM$. 

We wish to show that for $m\in Tw \cM (n)$ and $x_1,\dots,x_n\in \mathfrak g^\mu$, 
$$d^{\mu'}(m(x_1,\dots,x_n)) = (d_\cM m + d_L m + d_R m)(x_1,\dots,x_n) +  \sum_{i=1}^n (-1)^{|m|+|x_1| + \dots + |x_{i-1}|} m(x_1,\dots,d^{\mu}x_i,\dots,x_n).$$   

Keep in mind in the following computations that $m_r$ has $r$ non-symmetric inputs and $n$ symmetric inputs, whereas $l^{\cP/\cQ}_r$ will be of arity $r$ but will have $r-1$ non-symmetric inputs.  Expanding the right hand side we get

$$\sum_{r\geq 0}\frac{1}{r!}d_\cM  m_r(\mu,\dots,\mu,x_1,\dots,x_n)+ \sum_{k\geq 2, r\geq 0}\frac{1}{(k-1)!r!} l^\cP_k(\mu',\dots,\mu', m_r(\mu,\dots,\mu,x_1,\dots, x_n))+ $$

$$-\sum_{r\geq 0, k\geq 2} \frac{r}{r!k!} m_r(l_k^{\cQ}(\mu,\dots,\mu),\dots,\mu,x_1,\dots,x_n) +$$

$$ -\sum_{r\geq 0,k\geq 2} \frac{(-1)^{|x_1|+\dots+|x_{i-1}|}}{r!(k-1)!} m_r(\mu,\dots,\mu,x_1,\dots,l^\cQ_k(\mu,\dots,\mu, x_i),\dots,x_n)+$$

$$-\sum_{i=1}^n \sum_{r \geq 0} \frac{(-1)^{|x_1|+\dots+|x_{i-1}|}}{r!} m_r(\mu,\dots,\mu, x_1, \dots, dx_i,\dots,x_n)+$$

$$\sum_{i=1}^n \sum_{r \geq 0, k\geq 2} \frac{(-1)^{|x_1|+\dots+|x_{i-1}|}}{r!(k-1)!} m_r(\mu,\dots,\mu, x_1, \dots, l^\cQ_k(\mu,\dots,\mu,x_i),\dots,x_n).$$

Using the Maurer-Cartan equation, the third summand simplifies to $$\sum_{r\geq 0} \frac{r}{r!} m_r(d\mu,\mu,\dots,\mu,x_1,\dots,x_n),$$
therefore, the first, third and fifth summands add up to $d(m(x_1,\dots,x_n))$, while the fourth and sixth summands cancel out, leaving us precisely with $d^{\mu'}(m(x_1,\dots,x_n))$.


\begin{thebibliography}{10}

\bibitem[Ko]{Kontsevich}
Maxim Kontsevich.
\newblock {\em Deformation quantization of Poisson manifolds.}
\newblock {Letters in Mathematical Physics 66.3 (2003): 157-216.}

\bibitem[DW]{Operadic Twisting}
Vasily Dolgushev and Thomas Willwacher.
\newblock{\em Operadic twisting - with an application to Deligne's conjecture, 2012.}
\newblock {arXiv:1207.2180v3.}

\bibitem[GJ]{Getzler-Jones}
Ezra Getzler, J. D. S. Jones.
\newblock {\em Operads, homotopy algebra and iterated integrals for double loop spaces.}
\newblock {arXiv:hep-th/9403055}

\bibitem[CKTB]{FM}
 Alberto Cattaneo,  Bernhard Keller,  Charles Torossian, Alain Brugui\`eres,
\newblock {\em D\' eformation, quantification, th\' eorie de Lie.}
\newblock {Soci\' et\' e Math\' ematique de France, Paris, 2005.}


\bibitem[Wi]{Note}
Thomas Willwacher.
\newblock{\em A Note on $\Br_\infty$- and $\mathsf{KS}_\infty$-formality, 2011.}
\newblock {arXiv:1109.3520v1.}

\bibitem[Lo]{cyclic homology}
Jean-Louis Loday.
\newblock{\em Cyclic homology. }
\newblock {Springer-Verlag, Berlin, 1992.}


\bibitem[GK]{Cyclic Operads}
Ezra Getzler, Mikhail Kapranov.
\newblock {\em Cyclic operads and cyclic homology.}
\newblock {Geometry, topology, \& physics, 167-201, 
Conf. Proc. Lecture Notes Geom. Topology, IV, Int. Press, Cambridge, MA, 1995. }

\bibitem[LV]{Loday-Vallette}
Jean-Louis Loday, Bruno Vallette.
\newblock {\em Algebraic Operads.}
\newblock {Grundlehren der Mathematischen Wissenschaften, 346. Springer, Heidelberg, 2012.}

\bibitem[MS]{Homology of Braces}
J. E. McClure, J. H. Smith.
\newblock {\em  Multivariable cochain operations and little n-cubes.}
\newblock {J. Amer. Math. Soc. 16, 3 (2003) 681-704.}

\bibitem[Wa]{Alternative paper}
Benjamin Ward.
\newblock {\em Cyclic $A_\infty$ structures and Deligne's conjecture..}
\newblock {Algebr. Geom. Topol. 12 (2012).}


\bibitem[KS]{KS}
Maxim Kontsevich, Yan Soibelman.
\newblock{\em Deformations of algebras over operads and the Deligne conjecture.}
\newblock {Conf\'{e}rence Mosh\'{e} Flato 1999, Vol. I (Dijon), 255-307, Math. Phys. Stud., 21, Kluwer Acad. Publ., Dordrecht, 2000.}

\bibitem[HLTV]{semi-algebraic manifolds}
Robert Hardt, Pascal Lambrechts, Victor Turchin, Ismar Volic.
\newblock{\em Real homotopy theory of semi-algebraic sets.}
\newblock {Algebr. Geom. Topol. 11 (2011), no. 5, 2477-2545. 
55P62 (14P10)}

\bibitem[LV]{discs}
Pascal Lambrechts, Ismar Volic.
\newblock{\em Formality of the little N-disks operad.}
\newblock {Mem. Amer. Math. Soc. 230 (2014), no. 1079, viii+116 pp.  }

\bibitem[GS]{formality framed discs}
Jeffrey Giansiracusa, Paolo Salvatore. 
\newblock{\em Formality of the framed little 2-discs operad and semidirect products.}
\newblock {Contemp. Math., 519, Amer. Math. Soc., Providence, RI, 2010.}

\bibitem[Ge]{framed homology}
Ezra Getzler. 
\newblock{\em Batalin-Vilkovisky algebras and two-dimensional topological field theories.}
\newblock {Comm. Math. Phys. 159 (1994), no. 2, 265-285.}

\bibitem[CW]{operadic torsors}
Ricardo Campos, Thomas Willwacher.
\newblock{\em Operadic Torsors, 2014}
\newblock {arXiv:1412.3614}

\bibitem[Do]{Dolgushev}
Vasiliy Dolgushev.
\newblock{\em A formality theorem for Hochschild chains.}
\newblock {Adv. Math. 200 (2006), no. 1, 51-101. }

\bibitem[Ta]{Tamarkin} D. Tamarkin.
\newblock {\em Another proof of M. Kontsevich formality theorem, 1998.}
\newblock {arXiv:math/9803025}

\bibitem[Hi]{Hinich}
Vladimir Hinich.
\newblock{\em Tamarkin's proof of Kontsevich formality theorem.}
\newblock {Forum Math. 15 (2003), no. 4, 591-614.}


\end{thebibliography}
\end{document}